 \newtheorem{thm}{Theorem}[section]
 \newtheorem{cor}[thm]{Corollary}
 \newtheorem{lem}[thm]{Lemma}
 \newtheorem{prop}[thm]{Proposition}
 \theoremstyle{definition}
 \newtheorem{defn}[thm]{Definition}
 \theoremstyle{remark}
 \newtheorem{rem}[thm]{Remark}
 \newtheorem*{ex}{Examples}
 \numberwithin{equation}{section}
 \def\Xint#1{\mathchoice
 	{\XXint\displaystyle\textstyle{#1}}%
 	{\XXint\textstyle\scriptstyle{#1}}%
 	{\XXint\scriptstyle\scriptscriptstyle{#1}}%
 	{\XXint\scriptscriptstyle\scriptscriptstyle{#1}}%
 	\!\int}
 \def\XXint#1#2#3{{\setbox0=\hbox{$#1{#2#3}{\int}$}
 		\vcenter{\hbox{$#2#3$}}\kern-.5\wd0}}
 \def\dashint{\Xint-}
\begin{document}

%
%
%
%
%
%
%
%
%

	
\title{(Two-scale) $W^{1}L^{\Phi}$-gradient Young measures and homogenization of integral functionals in Orlicz-Sobolev spaces}
\author{Joel Fotso Tachago,\footnote{University of Bamenda, 	Higher Teachers Training College, Department of Mathematics
			P.O. Box 39
			Bambili, Cameroon, email:fotsotachago@yahoo.fr} \, Hubert Nnang, 	\footnote{University of Yaounde I
			Higher Teachers Training College, Department of Mathematics
			P.O. Box 47
			Yaounde, Cameroon email: hnnang@yahoo.fr} \\
	 Franck Tchinda,\footnote{University of Maroua, Department of Mathematics and Computer Science, P.O. box 814, Maroua, Cameroon email:takougoumfranckarnold@gmail.com} \, 	and Elvira
		Zappale\footnote{Department of Basic and Applied Sciences for Engineering, Sapienza-  University of Rome, via
			A. Scarpa,  16 (00161) Roma (SA), e-mail:
			elvira.zappale1@uniroma1.it}}\date{ } \maketitle
	\begin{abstract}

		(Two-scale) gradient Young measures in Orlicz-Sobolev setting are introduced and characterized providing also an integral representation formula for non convex energies arising in homogenization problems with nonstandard growth. 
		
		\medskip
		\noindent Keywords: Gradient Young measures, homogenization, Orlicz-Sobolev spaces,
		$\Gamma$-convergence, two-scale convergence.
		\par
		\noindent MSC 2020:  49J45, 74Q05
	\end{abstract}











 

\maketitle


\section{Introduction} \label{sect1} 

\color{black} 

The homogenization of periodic structures has been subject of interest in the last 50 years in order to deal with problems involving differential equations and systems, nonlocal models, supremal integral energies. Many techniques have been developed: we recall the basic works [DGS], [BLP], [MT],
then the asymptotic behaviour of these materials with fine distributed periodic heterogeneities has been carried out generating many results in different
settings.
Here we focus on  problems concerning vector-valued configurations whose equilibrium states can be detected studying minimizers of nonconvex energies.

In particular we consider a family of functionals of the type 
\begin{equation}\label{intro1}
	\mathcal{F}_{\varepsilon}(u) := \int_{\Omega} f\left(x,\langle\frac{x}{\varepsilon}\rangle, \nabla u(x)\right) dx,
\end{equation}  
as $\varepsilon\to 0$. Here $\Omega$ (bounded open subset of $\mathbb{R}^{N}$) is the reference configuration of a nonlinear elastic body with periodic microstructure and whose heterogeneities scale like a small parameter $\varepsilon>0$. 
\par In this paper, we are interested in the case where the functional \eqref{intro1} is defined on the Orlicz-Sobolev spaces. The extension to the Orlicz-Sobolev's spaces is motived by the fact that  there exist problems whose solution must naturally belong not to the classical Sobolev spaces (see, e.g. \cite{mignon2}). Thus, the function $u$ in \eqref{intro1} belongs to $W^{1}L^{\Phi}(\Omega; \mathbb{R}^{d})$, with $\Phi$ a Young function of class $\Delta_{2}$, stands for a deformation and $f: \Omega\times Y\times \mathbb{R}^{d\times N} \to [0,+\infty)$, with $Y:= (0,1)^{N}$, is the stored energy density of this body that is assumed to satisfy nonstandard $\Phi$-coercivity and $\Phi$-growth conditions (see \eqref{c1intro2}), with integers $d\geq 1$ and $N\geq 1$. The presence of the term $\langle\frac{x}{\varepsilon}\rangle$ (fractional part of the vector $\dfrac{x}{\varepsilon}$ componentwise) takes into account the periodic microstructure of the body, with the integrand $f(x,\cdot,\xi)$ in \eqref{intro1} periodic. The macroscopic  description of this material may be understood by an asymptotic as $\varepsilon \to 0$, computed as the $\Gamma$-limit of \eqref{intro1} with respect to the $L^{\Phi}(\Omega; \mathbb{R}^{d})$-topology, (equivalently $W^1L^{\Phi}$- weak  if $\Omega$ is, for instance, Lipschitz). 

Since the method employed in \cite{tacha2, tacha3, tacha6, FTGNZ} to compute the $\Gamma$-limit of functionals of the type \eqref{intro1}, both under convexity and without convexity assumptions on $f(x, \langle \frac{x}{\varepsilon}\rangle, \cdot)$, has been two-scale convergence (introduced in \cite{nguet1, allair1} in the Sobolev setting and later extended to Sobolev-Orlicz spaces in \cite{tacha1}) (or periodic unfolding method introduced in \cite{CDG1, CDG2, CDGbook}, applied to homogenization of integral functionals in \cite{CDDEA1, CDDEA2} and later extended to the Orlicz framework in \cite{tacha5, FTGNZ}), we want to relate these tools to the one of Young measures following the ideas in \cite{pedregal1,pedregal2,pedregal3}.
Indeed, Young measures are an important tool for studying the asymptotic behavior of solutions of nonlinear partial differential equations as emphasized in the pioneering work \cite{diper1}. A key feature of these measures is their capacity to capture the oscillations of minimizing sequences of non convex variational problems see \cite{fonse2} among a wider bibliography.
The special properties of Young measures generated by sequences of gradients of Sobolev functions have been studied by Kinderlherer \& Pedegral \cite{kinder2,kinder1} and are relevant in the applications to nonlinear elasticity. Two-scale Young measures, which have been introduced in \cite{w} and \cite{pedregal3} to study periodic homogenization of nonlinear transport equations and integral functionals, contain some information on the amount of oscillations and extend Nguetseng's notion of two-scale convergence, thus entailing to get multiscale homogenization results for \eqref{intro1} in the classical Sobolev setting, e.g. the results in \cite{baba1} where two-scale ($W^{1,p}$)-gradient Young measures have been exaustively studied and applied.
 
\par Following this latter approach, in order to address the homogenization of \eqref{intro1}, with $f(x, \langle\frac{x}{\varepsilon}\rangle, \cdot)$ with non standard growth,  we consider Young measures generated by sequences of the type $\{(\langle\cdot/\varepsilon\rangle, \nabla u_{\varepsilon})\}$, which are, roughly speaking, what we will call \textit{two-scale $W^1L^{\Phi}$- gradient Young measures}. From a physical point of view, we seek to capture microstructures-due to finer and finer oscillations of minimizing sequences that cannot reach an optimal state- at a given scale $\varepsilon$ (period of the material heterogeneities). In this way, the minima of the limit problem captures two kinds of oscillations of the minimizing sequences:                                   those due to the periodic heterogeneities of the material and those due to a possible multi-well structure. Clearly the analysis of two-scale $W^1L^\Phi$-gradient Young measures contains information about   $W^1L^\phi$-gradient Young
measures. In fact this paper also contains explicit results regarding this latter framework. \color{black} 
\par In details, we extend  the results in \cite{kinder1} and \cite{baba1} to the Orlicz-Sobolev's spaces, giving a complete algebraic characterization of $W^{1}L^{\Phi}$- gradient Young measures and two-scale  $W^{1}L^{\Phi}$-gradient Young measures, in particular we obtain in the first case a  characterization in terms of a Jensen's inequality with test functions in the space $\mathscr{E}_\Phi$ of continuous functions $f : \mathbb{R}^{d\times N} \to \mathbb{R}$ such that the limit 
\begin{equation}\label{1.2}
	\lim_{|\xi|\to +\infty} \dfrac{f(\xi)}{1 + \Phi(|\xi|)},
\end{equation} 
exists, and in the space,  $\mathcal{E}_{\Phi}$ of continuous functions $f : \overline{Y}\times \mathbb{R}^{d\times N} \to \mathbb{R}$ such that the limit 
\begin{equation}\label{1.3}
	\lim_{|\xi|\to +\infty} \dfrac{f(y, \xi)}{1 + \Phi(|\xi|)},
\end{equation} 
exists uniformly with respect to $y\in \overline{Y}$.  
Namely, taking into account formulae \eqref{Dela2alpha} and \eqref{nabla2beta}, we obtain the following results.

\begin{thm}\label{c1theo0}
Let $\Omega$ be a bounded open subset of $\mathbb{R}^{N}$ with Lipschitz boundary,	let $\Phi$ be a Young function of class $\Delta_{2}\cap \nabla_2$, and let $\lambda \in L^{\infty}_{\omega}\left(\Omega; \mathcal{M}(\mathbb{R}^{d\times N})\right)$ be such that $\lambda_{x} \in \mathcal{P}(\mathbb{R}^{d\times N})$ for a.e. $x \in \Omega$. The family $\{\lambda_{x}\}_{x\in \Omega}$ is $W^{1}L^{\Phi}$- gradient Young measure if and only if the three conditions below hold : 
	\begin{itemize}
		\item[i)] there exists $u \in W^{1}L^{\Phi}(\Omega; \mathbb{R}^{d})$ 
		such that 
		\begin{equation}\label{c1eq10}
			\int_{\mathbb{R}^{d\times N}} \xi d\lambda_{x}(\xi) = \nabla u(x) \hbox{ for a.e.} \, x \in \Omega;  
		\end{equation}
		\item[ii)] for every $f \in \mathscr{E}_{\Phi}$, 
		\begin{equation}\label{c1eq20}
			\int_{\mathbb{R}^{d\times N}} f(\xi) d\lambda_{x}(\xi) \geq  {\mathcal Q} f(\nabla u(x)) \;\, \hbox{ for a.e.} \, x \in \Omega,  
		\end{equation}
	where $\mathcal Q$ f is the quasiconvexification of $f$ in the sense of Morrey, i.e.
	\begin{equation}\label{Qfdef}
		{\mathcal Q} f(\xi)=\inf\left\{\frac{1}{\mathcal L^N(\Omega)} \int_\Omega f(\xi + \nabla \varphi(y))dy: \varphi \in C^\infty_c(\Omega;\mathbb R^d)\right\}
	\end{equation}
	\color{black}
		\item[iii)] \begin{equation}\label{c1eq40}
			x \mapsto \int_{\mathbb{R}^{d\times N}} \Phi(|\xi|) d\lambda_{x}(\xi) \in L^{1}(\Omega).
		\end{equation}
	\end{itemize} 
\end{thm}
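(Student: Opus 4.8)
The plan is to establish the two implications separately. The ``only if'' direction will follow from lower semicontinuity of quasiconvex functionals on $W^1L^\Phi$ together with a De~La~Vallée~Poussin/truncation (Decomposition Lemma) argument encoded in \eqref{Dela2alpha}; the ``if'' direction will be reduced, through a cube decomposition and gluing in the $x$ variable, to the purely algebraic characterization of homogeneous (i.e.\ $x$-independent) gradient Young measures, which is then settled by Hahn--Banach duality against $\mathscr{E}_\Phi$. \emph{Necessity.} Suppose $\{\lambda_x\}$ is generated by $\{\nabla u_j\}$ with $u_j\rightharpoonup u$ in $W^1L^\Phi(\Omega;\mathbb{R}^d)$; boundedness in $W^1L^\Phi$ and $\Phi\in\Delta_2$ make $\{\Phi(|\nabla u_j|)\}$ bounded in $L^1(\Omega)$. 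Testing the Young measure against bounded continuous maps and using $\nabla u_j\rightharpoonup\nabla u$ gives the barycenter identity \eqref{c1eq10}, hence (i); Fatou's property for Young measures applied to the nonnegative continuous integrand $\Phi(|\cdot|)$ gives $\int_\Omega\langle\lambda_x,\Phi(|\cdot|)\rangle\,dx\le\liminf_j\int_\Omega\Phi(|\nabla u_j|)\,dx<\infty$, i.e.\ (iii). For (ii), by \eqref{Dela2alpha} we may assume, without changing the underlying Young measure, that $\{\Phi(|\nabla u_j|)\}$ is equi-integrable; then $|f(\nabla u_j)|\le C(1+\Phi(|\nabla u_j|))$ is equi-integrable for $f\in\mathscr{E}_\Phi$, so $f(\nabla u_j)\rightharpoonup\langle\lambda_\cdot,f\rangle$ in $L^1$, while lower semicontinuity of the quasiconvex, $\Phi$-growth integrand $\mathcal{Q}f$ and $\mathcal{Q}f\le f$ give $\int_A\mathcal{Q}f(\nabla u)\,dx\le\liminf_j\int_A\mathcal{Q}f(\nabla u_j)\,dx\le\liminf_j\int_A f(\nabla u_j)\,dx=\int_A\langle\lambda_x,f\rangle\,dx$ for every measurable $A\subset\Omega$. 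Shrinking $A$ to Lebesgue points and using separability of $\mathscr{E}_\Phi$ to obtain a common null set yields \eqref{c1eq20}.

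\emph{Sufficiency: the homogeneous case.} Fix $F\in\mathbb{R}^{d\times N}$ and let $\mathcal{H}_F$ denote the set of $\mu\in\mathcal{P}(\mathbb{R}^{d\times N})$ with barycenter $F$, $\int\Phi(|\xi|)\,d\mu(\xi)<\infty$, that are homogeneous $W^1L^\Phi$-gradient Young measures. I would first check that $\mathcal{H}_F$ is convex and closed for the coarsest topology making $\mu\mapsto\langle\mu,f\rangle$ continuous for every $f\in\mathscr{E}_\Phi$, when restricted to measures with equibounded $\Phi$-moment: closedness comes from diagonal extraction from the generating sequences together with \eqref{Dela2alpha} to restore $\Phi$-equi-integrability, using $\Phi\in\nabla_2$ (hence reflexivity of $W^1L^\Phi$) to extract weakly convergent gradients. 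The analytic core is the identity
\begin{equation}\label{dualQ}
	\mathcal{Q}f(F)=\inf\{\langle\mu,f\rangle:\mu\in\mathcal{H}_F\}\qquad(f\in\mathscr{E}_\Phi):
\end{equation}
here ``$\le$'' holds by testing with the homogeneous Young measure generated by a $Y$-periodically extended and rescaled near-optimal competitor $F+\nabla\varphi$ in \eqref{Qfdef} (such a generating sequence being automatically $\Phi$-equi-integrable since $\nabla\varphi$ is bounded), and ``$\ge$'' is the necessity above applied to a constant-in-$x$ measure. Granting \eqref{dualQ} and closedness, if some $\mu_0$ satisfies $\langle\mu_0,f\rangle\ge\mathcal{Q}f(F)$ for all $f\in\mathscr{E}_\Phi$, has finite $\Phi$-moment and barycenter $F$, yet $\mu_0\notin\mathcal{H}_F$, a Hahn--Banach separation of $\mu_0$ from the closed convex set $\mathcal{H}_F$ produces $f\in\mathscr{E}_\Phi$ with $\langle\mu_0,f\rangle<\inf_{\mu\in\mathcal{H}_F}\langle\mu,f\rangle=\mathcal{Q}f(F)$, a contradiction; hence $\mu_0\in\mathcal{H}_F$.

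\emph{Sufficiency: the general case.} Let $\lambda$ satisfy (i)--(iii). For $n\in\mathbb{N}$ decompose $\Omega$, up to a null set, into cubes $\{Q^n_i\}$ of side $1/n$, and set $F^n_i:=\tfrac{1}{\mathcal{L}^N(Q^n_i)}\int_{Q^n_i}\nabla u\,dx$ and $\mu^n_i:=\tfrac{1}{\mathcal{L}^N(Q^n_i)}\int_{Q^n_i}\lambda_x\,dx$. By (iii) the $\mu^n_i$ have finite $\Phi$-moment; by (i) their barycenter is $F^n_i$; and by (ii) together with Jensen's inequality for the quasiconvex function $\mathcal{Q}f$, $\langle\mu^n_i,f\rangle\ge\tfrac{1}{\mathcal{L}^N(Q^n_i)}\int_{Q^n_i}\mathcal{Q}f(\nabla u)\,dx\ge\mathcal{Q}f(F^n_i)$ for all $f\in\mathscr{E}_\Phi$, whence $\mu^n_i\in\mathcal{H}_{F^n_i}$ by the homogeneous case. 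A standard gluing across the cube faces --- replacing $u$ on $Q^n_i$ by a generator of $\mu^n_i$ relative to $F^n_i$, reconciled with the neighbouring patches on thin boundary layers whose Orlicz contribution vanishes as the layers shrink, using $\Phi\in\Delta_2$ --- produces admissible sequences in $W^1L^\Phi(\Omega;\mathbb{R}^d)$ generating a measure $\lambda^n$ with $\lambda^n\to\lambda$ in the $\mathscr{E}_\Phi$-duality and $\Phi$-moments bounded by (iii). The closedness argument of the previous step (diagonal extraction plus \eqref{Dela2alpha}, now in the $x$ variable) then gives that $\lambda$ is a $W^1L^\Phi$-gradient Young measure.

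\emph{Main obstacle.} The delicate point is the homogeneous case --- precisely the identity \eqref{dualQ} and the closedness of $\mathcal{H}_F$ --- because in the Orlicz setting one cannot rely on $p$-homogeneity of the growth function: the equi-integrability that identifies $\lim_j\int f(\nabla u_j)\,dx$ with $\int\langle\lambda_x,f\rangle\,dx$ must be manufactured by the De~La~Vallée~Poussin/truncation tools behind \eqref{Dela2alpha}, and the Hahn--Banach separation forces one to verify that $\mathscr{E}_\Phi$, equipped with the norm $\|f\|:=\sup_\xi|f(\xi)|/(1+\Phi(|\xi|))$, is separable and is exactly the space of integrands against which integration over $\Phi$-moment-bounded Young measures is continuous --- both being consequences of $\Phi\in\Delta_2\cap\nabla_2$ (the $\nabla_2$ part, via \eqref{nabla2beta} and reflexivity, also furnishing the weak compactness of bounded sequences in $W^1L^\Phi$ used throughout).
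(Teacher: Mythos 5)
Your overall route is the one the paper itself takes (after Kinderlehrer--Pedregal and \cite{baba1}): necessity via $\Phi$-equi-integrability plus lower semicontinuity/relaxation of quasiconvex integrands, and sufficiency via Hahn--Banach separation against $\mathscr{E}_\Phi$ in the homogeneous case followed by a piecewise-constant-in-$x$ reduction. The necessity part and the homogeneous sufficiency (convexity and weak$^*$ closedness of your $\mathcal H_F$, the duality identity $\mathcal Qf(F)=\inf\{\langle\mu,f\rangle:\mu\in\mathcal H_F\}$, and the separation step) match Lemma \ref{c2lem0}, Lemma \ref{c3lem4} and Lemma \ref{c3lem10}; the paper even records in a remark the direct necessity argument you use, as an alternative to deducing it from the two-scale statement.

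There is, however, a genuine gap in your reduction of the general case to the homogeneous one. You set $\mu^n_i:=\dashint_{Q^n_i}\lambda_x\,dx$, $F^n_i:=\dashint_{Q^n_i}\nabla u\,dx$, and justify $\dashint_{Q^n_i}\mathcal Qf(\nabla u)\,dx\ge\mathcal Qf(F^n_i)$ by ``Jensen's inequality for the quasiconvex function $\mathcal Qf$''. Quasiconvex functions do not satisfy Jensen's inequality against the gradient distribution of an arbitrary Sobolev map: quasiconvexity only gives $\dashint_Q g(F+\nabla\varphi)\,dx\ge g(F)$ for perturbations $\varphi$ vanishing on $\partial Q$, and a general $u\in W^1L^{\Phi}(Q^n_i;\mathbb R^d)$ with average gradient $F^n_i$ is not of that form. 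The principle you invoke is false: for $d=N=2$ the quasiconvex (indeed quasiaffine) function $g=-\det$ and the map $u(x)=(x_1x_2,\,x_1+x_2^2)$ on $Y$ give $\dashint_Y(-\det\nabla u)\,dx=-1/6<0=-\det\bigl(\dashint_Y\nabla u\,dx\bigr)$. Hence you have not shown $\mu^n_i\in\mathcal H_{F^n_i}$, and the whole gluing step rests on that membership. The paper (following \cite{kinder1,baba1}) avoids this by localizing at Lebesgue points rather than averaging: Proposition \ref{c3prop1} shows that for a.e.\ $a\in\Omega$ the measure $\lambda_a$ itself satisfies the homogeneous hypotheses with $F=\nabla u(a)$ --- no Jensen-type step is needed because condition (ii) is already a pointwise a.e.\ statement --- and one then approximates $\lambda$ by the piecewise constant family $x\mapsto\lambda_{a_i}$ on a Vitali cover before gluing generators via Lemma \ref{c2lem1}. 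Your argument becomes correct if you make that substitution, but as written the key inequality fails.
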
 
\color{black}
\begin{thm}\label{c1theo1}
	Let $\Omega$ be a bounded open subset of $\mathbb{R}^{N}$ with Lipschitz boundary  and let $\Phi$ be a Young function of class $\Delta_{2}\cap \nabla_2$, 
	and let $\nu \in L^{\infty}_{\omega}\left(\Omega\times Y; \mathcal{M}(\mathbb{R}^{d\times N})\right)$ be such that $\nu_{(x,y)} \in \mathcal{P}(\mathbb{R}^{d\times N})$ for a.e. $(x,y) \in \Omega\times Y$. The family $\{\nu_{(x,y)}\}_{(x,y)\in \Omega\times Y}$ is a $W^{1}L^\Phi$-two-scale gradient Young measure if and only if the three conditions below hold : 
	\begin{itemize}
		\item[i)] there exist $u \in W^{1}L^{\Phi}(\Omega; \mathbb{R}^{d})$ and $u_{1} \in L^{1}(\Omega; W^{1}_{\#}L^{\Phi}_{per}(Y;\mathbb R^d))$ with $\left(u_1, \frac{\partial u_{1}}{\partial y_{i}}\right) \in L^{\Phi}(\Omega\times Y_{per};\mathbb R^{2d})$ ($1 \leq i \leq N$) such that 
		\begin{equation}\label{c1eq1}
		\int_{\mathbb{R}^{d\times N}} \xi d\nu_{(x,y)}(\xi) = \nabla u(x) + \nabla_{y}u_{1}(x,y) \;\, \textup{for \,a.e.} \, (x,y) \in \Omega\times Y;  
		\end{equation}
		\item[ii)] for every $f \in \mathcal{E}_{\Phi}$, 
		\begin{equation}\label{c1eq2}
		\int_{Y}\int_{\mathbb{R}^{d\times N}} f(x,\xi) d\nu_{(x,y)}(\xi) dy \geq  f_{hom}(\nabla u(x)) \;\, \textup{for \,a.e.} \, x \in \Omega,  
		\end{equation}
		where 
		\begin{equation}\label{c1eq3}
		f_{hom}(\xi) = \lim_{T\to +\infty} \inf_{\phi} \left\{ \dashint_{(0,T)^{N}} f(\langle y\rangle, \xi+\nabla\phi(y))dy\, :\, \phi \in  W^{1}_{0}L^{\Phi}((0,T)^{N}; \mathbb{R}^{d}) \right\};
		\end{equation}
		\item[iii)] \begin{equation}\label{c1eq4}
		(x, y) \mapsto \int_{\mathbb{R}^{d\times N}} \Phi(|\xi|) d\nu_{(x,y)}(\xi) \in L^{1}(\Omega\times Y).
		\end{equation}
	\end{itemize} 
\end{thm}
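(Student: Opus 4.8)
The plan is to prove the two implications separately, following the scheme of Kinderlehrer--Pedregal \cite{kinder1} and of its two-scale counterpart \cite{baba1}, systematically replacing the power $|\xi|^p$ by $\Phi(|\xi|)$ and exploiting the standing assumption $\Phi\in\Delta_2\cap\nabla_2$ (equivalently: $L^\Phi$ is reflexive, the Luxemburg norm and the modular are comparable, smooth functions are dense in $W^1_0L^\Phi$, and the Orlicz analogue of the decomposition lemma for generating sequences of gradients holds, so that any generating sequence may be replaced by one along which $\{\Phi(|\nabla u_\varepsilon|)\}$ is equi-integrable). Two imported tools are used throughout: the fundamental theorem on Young measures applied to $z_\varepsilon:=(\langle\cdot/\varepsilon\rangle,\nabla u_\varepsilon)$ --- whose Young measure on $\overline Y\times\mathbb R^{d\times N}$ factorises as $dy\otimes\nu_{(x,y)}$ because $\langle\cdot/\varepsilon\rangle$ equidistributes on $Y$ --- and the two-scale compactness in $W^1L^\Phi$ of \cite{tacha1} together with the $\Gamma$-liminf inequality for Orlicz periodic homogenization established in the works quoted in the Introduction, namely $\Gamma\text{-}\liminf_\varepsilon\int_A f(\langle x/\varepsilon\rangle,\nabla v)\,dx\ge\int_A f_{hom}(\nabla v)\,dx$ for every open $A\subseteq\Omega$ and every continuous $f$ of $\Phi$-growth, with $f_{hom}$ the (quasiconvex) density \eqref{c1eq3}.

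For necessity, let $\nu$ be generated by $z_\varepsilon$, with $u_\varepsilon\rightharpoonup u$ in $W^1L^\Phi(\Omega;\mathbb R^d)$ and $\sup_\varepsilon\int_\Omega\Phi(|\nabla u_\varepsilon|)\,dx<\infty$; after the decomposition lemma we may assume $\{\Phi(|\nabla u_\varepsilon|)\}$ equi-integrable. Condition \eqref{c1eq4} is then immediate from the fundamental theorem applied to the nonnegative integrand $\Phi(|\xi|)$, which moreover yields $\int_{\Omega\times Y}\!\int\Phi(|\xi|)\,d\nu=\lim_\varepsilon\int_\Omega\Phi(|\nabla u_\varepsilon|)\,dx<\infty$. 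For \eqref{c1eq1}, two-scale compactness produces $u_1$ with the stated integrability such that $\nabla u_\varepsilon$ two-scale converges to $\nabla u+\nabla_y u_1$; testing the fundamental theorem against $(y,\xi)\mapsto\xi_{ij}\,\varphi(x)\psi(y)$ with $\varphi\in C_c^\infty(\Omega)$, $\psi\in C_{per}(Y)$, and identifying the resulting limit with the two-scale limit gives \eqref{c1eq1}. For \eqref{c1eq2}, fix $f\in\mathcal E_\Phi$: equi-integrability and the $\Phi$-growth of $f$ let the fundamental theorem give $f(\langle\cdot/\varepsilon\rangle,\nabla u_\varepsilon)\rightharpoonup\big(x\mapsto\int_Y\!\int f\,d\nu_{(x,y)}\,dy\big)$ weakly in $L^1(\Omega)$, while the $\Gamma$-liminf inequality (using the strong $L^\Phi$-convergence $u_\varepsilon\to u$, valid since $\Omega$ is Lipschitz) gives $\liminf_\varepsilon\int_B f(\langle x/\varepsilon\rangle,\nabla u_\varepsilon)\,dx\ge\int_B f_{hom}(\nabla u)\,dx$ for every open ball $B\Subset\Omega$; combining the two and letting $B$ shrink to a Lebesgue point of the $L^1$ maps involved yields \eqref{c1eq2}.

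For sufficiency, let $\nu$ satisfy \eqref{c1eq1}--\eqref{c1eq4}. I would first treat the \emph{homogeneous case}: $\nu_{(x,y)}\equiv\mu_y$ independent of $x$, with barycenter $A+\nabla_y w$, $w\in W^1_\#L^\Phi_{per}(Y;\mathbb R^d)$, and $\int_Y\!\int\Phi(|\xi|)\,d\mu_y\,dy\le R$. The set $\mathcal H_A^R$ of such $\{\mu_y\}$ that arise as ($x$-independent) two-scale $W^1L^\Phi$-gradient Young measures is convex and weak-$*$ compact in the dual of a space built from $\mathcal E_\Phi$ --- here $\Phi\in\Delta_2\cap\nabla_2$ is essential, being what identifies $\mathcal E_\Phi$, the continuous $(y,\xi)$-integrands for which $f(y,\xi)/(1+\Phi(|\xi|))$ admits a uniform limit as $|\xi|\to\infty$, as the correct predual for probability measures with finite $\Phi$-moment. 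For each $g\in\mathcal E_\Phi$ one has $\inf_{\eta\in\mathcal H_A^R}\int_Y\!\int g\,d\eta_y\,dy=g_{hom}(A)$ (for $R$ large): ``$\ge$'' is the $\Gamma$-liminf bound applied to the affine map $x\mapsto Ax$ on a cube, and ``$\le$'' is obtained, for given $\delta>0$, by taking a near-optimal smooth corrector $\phi_T\in C_c^\infty((0,T)^N;\mathbb R^d)$ in \eqref{c1eq3} (admissible since $\Phi\in\Delta_2$), extending it by zero, tiling $\mathbb R^N$ by its translates, and setting $u_\varepsilon(x):=Ax+\varepsilon\sum_{k\in\mathbb Z^N}\phi_T(x/\varepsilon-Tk)$: then $u_\varepsilon\to Ax$ in $L^\Phi$, $\nabla u_\varepsilon$ is bounded, and because the generated two-scale Young measure averages the tiled corrector over the $T^N$ lattice translates one gets $\int_Y\!\int g\,d\eta_y\,dy=\dashint_{(0,T)^N}g(\langle z\rangle,A+\nabla\phi_T(z))\,dz\le g_{hom}(A)+\delta$. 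Since $\mathcal H_A^R$ is convex and weak-$*$ compact it equals the intersection of the half-spaces containing it; hence $\{\mu_y\}$ as above lies in $\mathcal H_A^R$ as soon as $\int_Y\!\int g\,d\mu_y\,dy\ge g_{hom}(A)$ for all $g\in\mathcal E_\Phi$, i.e.\ as soon as \eqref{c1eq2} holds with $\nabla u\equiv A$. This settles the homogeneous case. For the general $\nu$ satisfying \eqref{c1eq1}--\eqref{c1eq4}, one proceeds by localization and diagonalization as in \cite{kinder1,baba1}: by the homogeneous case and \eqref{c1eq2}, for a.e.\ $x_0$ the family $\{\nu_{(x_0,y)}\}_y$ is an ($x$-independent) two-scale gradient Young measure with barycenter $\nabla u(x_0)+\nabla_y u_1(x_0,\cdot)$; partitioning $\Omega$ into small cubes, using on each cube (a rescaling of) a generating sequence for $\{\nu_{(x_c,\cdot)}\}$ at the cube's centre $x_c$ matched, via cutoffs, to a piecewise affine approximation of $u$, and sending the mesh size and $\varepsilon$ to zero jointly, one obtains a sequence generating $\nu$ with deformation $u$; the cutoff matching and the limit passage rest on the Orlicz-Sobolev version of the ``fundamental estimate'' of $\Gamma$-convergence and on the equi-integrability afforded by $\Phi\in\Delta_2\cap\nabla_2$.

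I expect the main obstacles to be concentrated in the sufficiency part: (i) setting up the locally convex duality so that the separating (equivalently, support) functionals on the cone of measures with finite $\Phi$-moment are \emph{exactly} integration against elements of $\mathcal E_\Phi$ --- this is precisely where $\Phi\in\Delta_2\cap\nabla_2$ cannot be dispensed with and where the argument departs most from the reflexive $L^p$ setting of \cite{kinder1,baba1}; and (ii) transcribing the localization/gluing machinery to $W^1L^\Phi$, namely the Orlicz-Sobolev versions of the decomposition lemma and of the ``fundamental estimate'' needed to patch boundary values while keeping the constructed sequences bounded and equi-integrable in $L^\Phi$. The remaining verifications --- the bookkeeping in the single-cell versus multi-cell computation above, the strong $L^\Phi$-convergence of the recovery sequences, and the measurability and stability statements for families of two-scale Young measures --- are routine adaptations of \cite{baba1}. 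Finally, Theorem~\ref{c1theo0} follows along the same lines, taking $\nu$ to be an $x$-dependent family not depending on $y$ and replacing $f_{hom}$ by $\mathcal{Q}f$.
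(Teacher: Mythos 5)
Your proposal follows essentially the same route as the paper: necessity via the Orlicz decomposition lemma, two-scale compactness and the $\Gamma$-liminf inequality of \cite{FTGNZ}, and sufficiency via Hahn--Banach separation for the convex, weak-$*$ closed set of homogeneous two-scale gradient Young measures in $(\mathcal{E}_{\Phi})'$, followed by piecewise-constant approximation in the variable $x$. The only points the paper treats more carefully are the truncation $f_{M,\alpha}=\max\{-M,f\}+\alpha\Phi(|\cdot|)$ needed to apply the (coercive) $\Gamma$-convergence theorem to a general, possibly non-coercive $f\in\mathcal{E}_{\Phi}$, and the fact that the set $M_F$ is taken without your auxiliary moment bound $R$, which would otherwise introduce supporting half-spaces whose defining infima exceed $g_{hom}(F)$ and so weaken the ``intersection of half-spaces'' conclusion.
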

We note that both $\mathscr E_\Phi$ and $\mathcal{E}_{\Phi}$ are separable (see section \ref{sect3}), and thus it is sufficient to check conditions $(ii)$ in both theorems above for countably many test functions. 

The proof of both results are similar to the one first proposed in \cite{kinder1} for gradient Young measures and later extended to two-scale gradient Young measures in $W^{1,p}$ in \cite{baba1}. 

Due to the similarity of the statements, we prefer to present just the proof of the second result, also avoiding to reproduce the parts which are identical to the proof of \cite[Theorem 1.1]{baba1}.
On the other hand we emphasize that some of the implications of Theorem \ref{c1theo0} were contained in \cite[Theorem 1.2]{b}.

As regards as the proof of Theorem \ref{c1theo1}, as usual first the homogeneous case is addressed, i.e. considering two-scale gradient $W^{1}L^{\Phi}$- Young measures, independent of the macroscopic variable $x \in \Omega$, are considered. For these ones the result is achieved by means of the Hahn-Banach Seperation Theorem. Finally the general case is obtained by approximating general two-scale $W^{1}L^{\Phi}$- gradient Young measures by piecewise constant ones, with respect to the variable $x \in \Omega$. 
\par Theorem \ref{c1theo1} is a key tool to prove a representation theorem for the $\Gamma$-limit of \eqref{intro1} in terms of two-scale $W^{1}L^{\Phi}$- gradient Young measures, under  
very mild regularity hypotheses on the integrand $f$.
\begin{thm}\label{c1theo2}
	Let $\Omega$ be a bounded open subset of $\mathbb{R}^{N}$ with Lipschitz boundary, let $\Phi$ be a Young function of class $\Delta_{2}\cap \nabla_2$, 
	and let $f : \Omega\times Y\times \mathbb{R}^{d\times N} \to [0,+\infty)$ be an admissible integrand. Assume that there exist constants $\alpha, \beta$ and such that for all $(x,y, \xi) \in \Omega\times Y \times \mathbb{R}^{d\times N}$
	\begin{equation}\label{c1intro2}
	\alpha \Phi(|\xi|) \leq f(x,y, \xi) \leq \beta (1+\Phi(|\xi|)).
	\end{equation}
	Then the functional $\mathcal{F}_{\varepsilon}$ $\Gamma$-converges with respect to the weak $W^{1}L^{\Phi}(\Omega; \mathbb{R}^{d})$-topology (or equivalently the strong $L^{\Phi}(\Omega; \mathbb{R}^{d})$-topology) to $\mathcal{F}_{hom} : $ \\ $ W^{1}L^{\Phi}(\Omega; \mathbb{R}^{d})  \to [0,+\infty)$ given by 
	\begin{equation*}
	\mathcal{F}_{hom}(u) = \min_{\nu \in \mathcal{M}_{u}} \int_{\Omega}\int_{Y}\int_{\mathbb{R}^{d\times N}} f(x,y,\xi) d\nu_{(x,y)}(\xi) dy dx,
	\end{equation*}
	where 
	\begin{equation}\label{c1mu}
	\begin{array}{rcl}
	\mathcal{M}_{u} & := & \bigg\{ \nu \in  L^{\infty}_{\omega}\left(\Omega\times Y; \mathcal{M}(\mathbb{R}^{d\times N})\right) :\, \{\nu_{(x,y)}\}_{(x,y)\in\Omega\times Y}  \\
	  &   & \;\;\; \hbox{ is a two-scale}-W^1L^{\Phi}-\hbox{ gradient Young measure such that }  \\
	 & & \;\;\;\; \;\;\;\;\;\;\;\; \displaystyle{\nabla u(x) = \int_{\Omega} \int_{\mathbb{R}^{d\times N}} \xi d\nu_{(x,y)}(\xi) dy \;\, \hbox{ for  a.e.} \, x \in \Omega \bigg\}}
	\end{array}
	\end{equation}
	for all $u \in W^{1}L^{\Phi}(\Omega; \mathbb{R}^{d})$.
\end{thm}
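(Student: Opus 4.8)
\textbf{Proof plan for Theorem \ref{c1theo2}.}

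The plan is to follow the classical strategy that couples $\Gamma$-convergence with a Young measure representation, as in \cite{baba1,pedregal3}, adapting each step to the Orlicz-Sobolev setting via the growth condition \eqref{c1intro2}. First I would establish the $\liminf$ inequality: given $u_\varepsilon \rightharpoonup u$ weakly in $W^1L^\Phi(\Omega;\mathbb R^d)$ with $\sup_\varepsilon \mathcal F_\varepsilon(u_\varepsilon) < +\infty$, the lower $\Phi$-coercivity in \eqref{c1intro2} together with the $\Delta_2\cap\nabla_2$ hypothesis gives equi-integrability of $\{\Phi(|\nabla u_\varepsilon|)\}$, so up to a subsequence the pairs $\{(\langle x/\varepsilon\rangle, \nabla u_\varepsilon(x))\}$ generate a two-scale $W^1L^\Phi$-gradient Young measure $\nu \in \mathcal M_u$ (this is precisely the content guaranteed by the characterization in Theorem \ref{c1theo1}; in particular condition iii) holds and the barycenter condition in \eqref{c1mu} is satisfied). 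Since $f$ is admissible and satisfies the upper bound in \eqref{c1intro2}, the function $(x,y,\xi)\mapsto f(x,y,\xi)$ lies (after the standard truncation/continuity reductions) in a class for which two-scale Young-measure lower semicontinuity applies, whence
\begin{equation*}
\liminf_{\varepsilon\to 0}\mathcal F_\varepsilon(u_\varepsilon) \ \ge\ \int_\Omega\int_Y\int_{\mathbb R^{d\times N}} f(x,y,\xi)\,d\nu_{(x,y)}(\xi)\,dy\,dx \ \ge\ \mathcal F_{hom}(u).
\end{equation*}

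Next I would prove the $\limsup$ inequality and simultaneously show the minimum defining $\mathcal F_{hom}$ is attained. Fix $u$ and take $\nu\in\mathcal M_u$. By Theorem \ref{c1theo1} there exist $u$ and $u_1\in L^1(\Omega;W^1_\#L^\Phi_{per}(Y;\mathbb R^d))$ with $\int_Y\int f(x,\cdot)\,d\nu \ge f_{hom}(\nabla u)$; more to the point, the ``if'' direction of Theorem \ref{c1theo1} (realizing any admissible $\nu$ as generated by an explicit sequence) provides, for each such $\nu$, a recovery sequence $u_\varepsilon \to u$ in $L^\Phi$ with $\{(\langle x/\varepsilon\rangle,\nabla u_\varepsilon)\}$ generating $\nu$ and $\limsup_\varepsilon \int_\Omega f(x,\langle x/\varepsilon\rangle,\nabla u_\varepsilon)\,dx \le \int_\Omega\int_Y\int f\,d\nu\,dy\,dx$; here the upper $\Phi$-growth and $\Delta_2$ give the requisite equi-integrability to pass the $\varepsilon$-limit inside. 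Optimizing over $\nu\in\mathcal M_u$ yields $\Gamma\text{-}\limsup\,\mathcal F_\varepsilon(u)\le \inf_{\nu\in\mathcal M_u}\int\int\int f\,d\nu\,dy\,dx$. Combined with the $\liminf$ bound, both inequalities collapse to equality and, by lower semicontinuity of $\nu\mapsto\int\int\int f\,d\nu$ on the (weak-$*$ closed, by Theorem \ref{c1theo1}) set $\mathcal M_u$ together with a compactness argument, the infimum is a minimum.

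The main obstacle I anticipate is the $\limsup$ construction: producing a recovery sequence $u_\varepsilon$ whose gradients both two-scale converge appropriately to $\nabla u(x)+\nabla_y u_1(x,y)$ and generate the prescribed oscillation $\nu_{(x,y)}$, while keeping $\{\Phi(|\nabla u_\varepsilon|)\}$ equi-integrable so that no energy is lost in the limit. In the $W^{1,p}$ case this is handled by a careful diagonalization combining the density/approximation results for two-scale gradient Young measures (piecewise-constant-in-$x$ approximation, as recalled after Theorem \ref{c1theo1}) with Acerbi--Fusco type truncation; in the Orlicz setting the truncation lemma must be replaced by its $L^\Phi$ analogue, which is available precisely because $\Phi\in\Delta_2\cap\nabla_2$ (so $L^\Phi$ is reflexive and the relevant De la Vallée-Poussin/biting-lemma machinery applies). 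A secondary technical point is the reduction from a general admissible (merely Carathéodory, with \eqref{c1intro2}) integrand $f$ to one continuous in all variables so that the Young-measure duality is directly applicable: this is done by Scorza-Dragoni approximation in $x$ and monotone approximation from below in $\xi$, together with the fact that the class $\mathcal E_\Phi$ of Theorem \ref{c1theo1} is rich enough (and separable) to test against. Once these two ingredients are in place, the matching of the $\liminf$ and $\limsup$ bounds, and hence the claimed representation of $\mathcal F_{hom}$ with the minimum attained, follows in the standard way.
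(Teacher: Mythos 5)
Your proposal follows essentially the same route as the paper: represent both $\Gamma$-bounds through the two-scale Young measure generated by $\{(\langle\cdot/\varepsilon_n\rangle,\nabla u_n)\}$, use the characterization (via Corollary \ref{c3cor1}) to place the limit measure in $\mathcal{M}_u$, and use an Orlicz Decomposition Lemma to build recovery sequences with equi-integrable $\Phi(|\nabla u_n|)$. Two points, however, deserve correction or comparison. First, in the $\liminf$ step you assert that the coercivity in \eqref{c1intro2} together with $\Phi\in\Delta_2\cap\nabla_2$ gives equi-integrability of $\{\Phi(|\nabla u_\varepsilon|)\}$ along an arbitrary bounded-energy sequence. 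This is false (already for $\Phi(t)=t^p$ a bounded sequence in $W^{1,p}$ need not have equi-integrable $|\nabla u_\varepsilon|^p$; taken literally your claim would turn every bounded-energy sequence into an energy-preserving one). Fortunately it is not needed there: the barycenter identity in \eqref{c1mu} only requires equi-integrability of $\nabla u_\varepsilon$ itself, which follows from boundedness in $L^\Phi$ and the superlinearity of $\Phi$ by de la Vall\'ee-Poussin, and the lower bound is the Fatou-type inequality for nonnegative admissible integrands (Proposition \ref{ball1}(iv), or \cite[Theorem 2.8(i)]{barchie2} as the paper uses). Equi-integrability of $\Phi(|\nabla u_n|)$ is obtained only after \emph{replacing} the generating sequence via the Decomposition Lemma of \cite{KoZa2017}, which is legitimate precisely in the $\limsup$ step where any generating sequence may be used; your ``main obstacle'' paragraph shows you are aware of this, but the attribution of equi-integrability to the growth conditions in the main argument should be removed.

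Second, for the attainment of the minimum the paper argues directly: it takes the recovery sequence, extracts the two-scale gradient Young measure it generates (which lies in $\mathcal{M}_u$ by Corollary \ref{c3cor1} and the barycenter computation), and observes via \cite[Theorem 2.8(ii)]{barchie2} that this measure realizes the value of the $\Gamma$-limit, hence is a minimizer. Your alternative route via weak$^{*}$ compactness of $\mathcal{M}_u$ and lower semicontinuity of $\nu\mapsto\iiint f\,d\nu$ is plausible but strictly harder: the paper proves weak$^{*}$ closedness only for the homogeneous sets $M_F$ (Lemma \ref{c3lem4}), not for the non-homogeneous $\mathcal{M}_u$, and $f$ is merely admissible with $\Phi$-growth, hence not an element of $\mathcal{E}_\Phi$, so continuity of the pairing fails and an approximation from below would be required. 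The direct argument via the recovery sequence avoids all of this and is the one you should adopt.
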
 
\par The paper is organized as follows: in section \ref{sect2} we present some preliminary results on $W^{1}L^{\Phi}$-gradient Young measures and  two-scale $W^{1}L^{\Phi}$-gradient Young measures.  Section \ref{sect3} is devoted to Theorems \ref{c1theo0} and \ref{c1theo1} while  the homogenization result Theorem \ref{c1theo2} is obtained in section \ref{sect4}.


\section{Preliminary results}\label{sect2}

\subsection{Notations}
In this section, we start fixing some notation, which for the readers' convenience is not very different from the one adopted in \cite{baba1}. We will also recall some preliminaries about Orlicz and Orlicz-Sobolev spaces that we will use in the sequel.
\begin{itemize}
	\item $\Omega$ is an open bounded subset of $\mathbb{R}^{N}$ with Lipschitz boundary.
	\item $\mathcal{A}(\Omega)$ denotes the family of all open subsets of $\Omega$.
	\item  $\mathcal{L}^{N}$ is the Lebesgue measure in $\mathbb{R}^{N}$.
	\item  $\mathbb{R}^{d\times N}$ is identified with the set of real $d\times N$ matrices.
	\item $Y := (0,1)^{N}$ is the unit cube in $\mathbb{R}^{N}$. 
	\item  The symbols $\langle\cdot\rangle$ and $[\cdot]$ stand, respectively, for the fractional and integer part of a number, or a vector, componentwise.
	\item  The Dirac mass at a point $a\in \mathbb{R}^{m}$ is denoted by $\delta_{a}$.
	\item The symbol $\dashint_{A}$ stands for the average $\mathcal{L}^{N}(A)^{-1}\int_{A}$.
	\item  $U$ is an open subset of $\mathbb{R}^{m}$.
	\item  $\mathcal{C}_{c}(U)$  is the space of continuous  functions $f: U\to \mathbb{R}$ with compact support.
	\item  $\mathcal{C}_{0}(U)$ is the closure of $\mathcal{C}_{c}(U)$ for the uniform convergence, it coincides with the space of all continuous functions $f: U\to \mathbb{R}$ such that, for every $\eta>0$, there exists a compact set $K_{\eta}\subset U$ with $|f|< \eta$ on $U\backslash K_{\eta}$.
	\item  $\mathcal{M}(U)$ is the space of real-valued Randon measures with finite total variation. We recall that by the Riesz Representation Theorem $\mathcal{M}(U)$ can be identified with the dual space of $\mathcal{C}_{0}(U)$ through the duality 
	\begin{equation*}
	\langle \mu, \phi\rangle = \int_{U} \phi d\mu, \quad \mu \in \mathcal{M}(U), \;\; \phi \in \mathcal{C}_{0}(U).
	\end{equation*}
	\item  $\mathcal{P}(U)$ denotes the space of probability measures on $U$, i.e. the space of all $\mu\in \mathcal{M}(U)$ such that $\mu \geqslant 0$ and $\mu(U)=1$.
	\item $L^{1}(\Omega;\mathcal{C}_{0}(U))$ is the space of maps $\phi: \Omega\to \mathcal{C}_{0}(U)$ such that 
	\begin{itemize}
		\item[i)] $\phi$ is strongly measurable, i.e. there exists a sequence of simple functions $s_{n}: \Omega\to \mathcal{C}_{0}(U)$ such $\|s_{n}(x)-\phi(x)\|_{\mathcal{C}_{0}(U)} \to 0$ for a.e. $x\in\Omega$,
		\item[ii)] $x\to \|\phi(x)\|_{\mathcal{C}_{0}(U)}\in L^{1}(\Omega)$.
	\end{itemize}
We recall that the linear space spanned by $\{\varphi\otimes\psi:\,\varphi\in L^{1}(\Omega)\, \textup{and}\, \psi\in\mathcal{C}_{0}(U)\}$ is dense in $L^{1}(\Omega;\mathcal{C}_{0}(U))$.
   \item $L^{\infty}_{\omega}(\Omega;\mathcal{M}(U))$ is the space of maps $\nu: \Omega \to \mathcal{M}(U)$ such that 
\begin{itemize}
	\item[i)] $\nu$ is weak$^{\ast}$ measurable, i.e. $x \to \langle\nu_{x},\phi\rangle$ is measurable for every $\phi\in\mathcal{C}_{0}(U)$,
	\item[ii)] $x\to \|\nu(x)\|_{\mathcal{M}(U)}\in L^{\infty}(\Omega)$.
\end{itemize}
The space $L^{\infty}_{\omega}(\Omega;\mathcal{M}(U))$ can be identified with the dual of $L^{1}(\Omega;\mathcal{C}_{0}(U))$ through the duality 
\begin{equation*}
\langle\mu,\phi\rangle = \int_{\Omega}\int_{U} \phi(x,\xi) d\mu_{x}(\xi) dx, \quad \mu\in L^{\infty}_{\omega}(\Omega;\mathcal{M}(U)), \;\; \phi\in L^{1}(\Omega;\mathcal{C}_{0}(U)), 
\end{equation*}
where $\phi(x,\xi):= \phi(x)(\xi)$ for all $(x,\xi) \in \Omega\times U$. 
\item $\Phi: [0,\infty)\to[0,\infty)$ is a Young function, i.e. $\Phi$ is continuous, convex, with $\Phi(t)>0$ for $t>0$, $\frac{\Phi(t)}{t}\to 0$ as $t\to 0$, and $\frac{\Phi(t)}{t}\to \infty$ as $t\to \infty$.
\item $\widetilde{\Phi}$ stands for the complementary of Young function $\Phi$, defined by 
\begin{equation*}
\widetilde{\Phi}(t)= \sup_{s\geq 0} \big\{st - \Phi(s), \; t\geq 0 \big\}.
\end{equation*} 
\item We recall that a Young function $\Phi$ is of class $\Delta_{2}$ at $\infty$ (denoted $\Phi\in \Delta_{2}$) if there are $\alpha>0$ and $t_{0}\geq 0$ such that 
\begin{equation}\label{Dela2alpha}\Phi(2t) \leq \alpha \Phi(t),\; \textup{for\,all}\, t\geq t_{0}.
	\end{equation}
 Also $\Phi$ is of class $\nabla_2$ if $\tilde \Phi$ is of class $\Delta_2$, i.e. $\exists \beta >0$ and $t_0 >1$ such that  
 \begin{equation}\label{nabla2beta}\Phi(t) \leq \frac{1}{2 \beta} \Phi(\beta t), \hbox{ for all }t \geq t_0.
 	\end{equation}
 
\item $L^{\Phi}(\Omega;\mathbb{R}^{d})$ is the Orlicz space of functions defined by 
\begin{equation*}
L^{\Phi}(\Omega;\mathbb{R}^{d}) = \left\{ u: \Omega\to \mathbb{R}^{d}\,;\, u\,\textup{is\,measurable},\; \lim_{\alpha\to 0}\int_{\Omega}\Phi(\alpha|u(x)|)dx = 0 \right\}.
\end{equation*}
We recall that $L^{\Phi}(\Omega;\mathbb{R}^{d})$ is a Banach space with respect to the Luxemburg norm 
\begin{equation*}
\|u\|_{\Phi} = \inf \left\{ k>0\,:\, \int_{\Omega}\Phi\left(\dfrac{|u(x)|}{k}\right)dx \leq 1 \right\}\, < \, +\infty.
\end{equation*}
Sometimes, we will denote the norm of elements in $L^{\Phi}(\Omega;\mathbb{R}^{d})$, both by  $\|\cdot\|_{\Phi}$ and with $\|\cdot\|_{L^{\Phi}}$.
\item   $\mathcal{D}(\Omega)$ is the space of  indefinitely differentiable functions $f: \Omega\to \mathbb{R}^{d}$ with compact support. We recall that $\mathcal{D}(\Omega)$ is dense in $L^{\Phi}(\Omega;\mathbb{R}^{d})$, $L^{\Phi}(\Omega;\mathbb{R}^{d})$ is separable and reflexive when $\Phi, \in \Delta_{2}\cap \nabla_2$.  The dual of $L^{\Phi}(\Omega;\mathbb{R}^{d})$ is identified with $L^{\widetilde{\Phi}}(\Omega;\mathbb{R}^{d})$. The property $\frac{\Phi(t)}{t}\to \infty$ as $t\to \infty$ implies that 
\begin{equation*}
L^{\Phi}(\Omega;\mathbb{R}^{d}) \subset L^{1}(\Omega;\mathbb{R}^{d}) \subset L^{1}_{loc}(\Omega;\mathbb{R}^{d}) \subset \mathcal{D}'(\Omega),
\end{equation*}
each embedding being continuous.
\item $W^{1}L^{\Phi}(\Omega,\mathbb{R}^{d})$ is the Orlicz-Sobolev space defined by
\begin{equation*}
W^{1}L^{\Phi}(\Omega,\mathbb{R}^{d}) = \left\{ u \in L^{\Phi}(\Omega;\mathbb{R}^{d})\,;\, \dfrac{\partial u}{\partial x_{i}} \in L^{\Phi}(\Omega;\mathbb{R}^{d}),\, 1\leq i\leq N \right\},
\end{equation*}
where derivatives are taken in the distributional sense on $\Omega$. Endowed with the norm 
\begin{equation*}
\|u\|_{W^{1}L^{\Phi}} = \|u\|_{L^{\Phi}} + \sum_{i=1}^{N} \left\|\dfrac{\partial u}{\partial x_{i}} \right\|_{L^{\Phi}}, \; u \in W^{1}L^{\Phi}(\Omega,\mathbb{R}^{d}),
\end{equation*}
$W^{1}L^{\Phi}(\Omega,\mathbb{R}^{d})$ is a reflexive Banach space when $\Phi \in \Delta_{2}\cap \nabla_2$. 
\item $W^{1}_{0}L^{\Phi}(\Omega,\mathbb{R}^{d})$ denotes the closure of $\mathcal{D}(\Omega)$ in $W^{1}L^{\Phi}(\Omega,\mathbb{R}^{d})$ and the semi-norm 
\begin{equation*}
u \longrightarrow \|u\|_{W^{1}_{0}L^{\Phi}} = \|Du\|_{L^{\Phi}} =  \sum_{i=1}^{N} \left\|\dfrac{\partial u}{\partial x_{i}} \right\|_{L^{\Phi}}
\end{equation*}
is a norm (of gradient) on $W^{1}_{0}L^{\Phi}(\Omega,\mathbb{R}^{d})$ equivalent to $\|\cdot\|_{W^{1}L^{\Phi}}$.
\item Given a function space $\mathcal{S}$ defined in $\Omega$, $Y$ or $\Omega\times Y$, the subscript $\mathcal{S}_{per}$ means that the functions are periodic in $\Omega$, $Y$ or $\Omega\times Y$, as it will be clear from the context.
	\item
$L^\Phi(\Omega \times \mathbb R^N_{loc})$ is the space defined by
\begin{align*} L^\Phi(\Omega \times \mathbb R^N_{loc}):=\left\{v \hbox{ measurable, such that } \right.\\ \left. \iint_{\Omega \times A} \Phi(v(x,y))dxdy<+\infty \hbox{ for every }A \subset \subset \mathbb R^N_y\right\}.
	\end{align*}
\item $L^{\Phi}(\Omega\times Y_{per})$ is the Orlicz space defined by
\begin{equation*}
L^{\Phi}(\Omega\times Y_{per}) = \left\{ v \in L^{\Phi}(\Omega\times \mathbb{R}^{N}_{loc}): v(x,\cdot)\, \hbox{ is }\, Y-\textup{periodic} \hbox{ for a.e. } x \in \Omega \right\}.
\end{equation*}
\item $W^{1}L^{\Phi}_{per}(Y)$ is the Orlicz-Sobolev space defined by
\begin{equation*}
	\begin{array}{ll}
W^{1}L^{\Phi}_{per}(Y) = \left\{ v \in W^{1}L^{\Phi}_{loc}(\mathbb{R}^{N}_{y}): v(x,\cdot)\,\textup{and}\,\dfrac{\partial v}{\partial y_{i}}(x,\cdot),\, 1\leq i\leq N, \right.\\
\left. \hbox{ are }\,\ Y-\textup{periodic} \hbox{ for a.e. }x \in \Omega\right\}.
\end{array}
\end{equation*}
\item $W^{1}_{\#}L^{\Phi}(Y)$ is the Orlicz-Sobolev space defined by
\begin{equation*}
W^{1}_{\#}L^{\Phi}(Y) = \left\{ v \in W^{1}L^{\Phi}_{per}(Y)\,:\, \dashint_{Y}v(y) dy = 0 \right\}.
\end{equation*}
It is endowed with the $L^\Phi$ norm of the gradients.
\end{itemize}

\subsection{(Two-scale) $W^{1}L^{\Phi}$-gradient Young measures}

In this subsection we assume that $\Phi$ is of class $\Delta_2 \cap \nabla_2$, which, as proven in \cite{DG}, is equivalent at requiring that both $\Phi$ and its conjugate are of class $\Delta_2$. It is also worth to observe that in this subsection the value of the constants $\alpha$ and $\beta$ in \eqref{Dela2alpha} and $\eqref{nabla2beta}$ is not crucial. It will only play a role in the characterization of our main theorems \ref{c1theo1} and \ref{c1theo2}.

We start first by recalling the notion of Young measure and some of its properties. We refer to \cite{baba1, FLbook, pedregal2} for a detailed description on the subject. 
\begin{defn}(Young measure)\\
	Let $\nu \in L^{\infty}_{\omega}(\Omega;\mathcal{M}(\mathbb{R}^{m}))$ and let $z_{n}: \Omega\to\mathbb{R}^{m}$ be a sequence of measurable functions. The family of measures $\{\nu_{x}\}_{x\in\Omega}$ is said to be the Young measure generated by $\{z_{n}\}$ provided $\nu_{x}\in\mathcal{P}(\mathbb{R}^{m})$ for a.e. $x\in \Omega$ and 
	\begin{equation*}
	\delta_{z_{n}} \;\, \overset{\ast}{\rightharpoonup}  \;\; \nu \;\, in \;\, L^{\infty}_{\omega}(\Omega;\mathcal{M}(\mathbb{R}^{m})),
	\end{equation*}
	i.e. for all $\psi \in L^{1}(\Omega;\mathcal{C}_{0}(\mathbb{R}^{m}))$
	\begin{equation*}
	\lim_{n\to +\infty} \int_{\Omega} \psi(x,z_{n}(x)) dx = \int_{\Omega}\int_{\mathbb{R}^{m}} \psi(x,\xi) d\nu_{x}(\xi) dx.
	\end{equation*}
\end{defn}

The family $\{\nu_{x}\}_{x\in\Omega}$ above defined is said to be  \textit{a homogeneous Young measure} if the map $x\to \nu_{x}$ is independent of $x$, hence $\{\nu_{x}\}_{x\in\Omega}$ can be identified with a single element $\nu$ of $\mathcal{M}(\mathbb{R}^{m})$. 

The existence of Young measures is well known and it is asserted in the so called Fundamental Theorem (see e.g. \cite{ball1},\cite[Theorem 2.2]{FMAq}), that we write for the readers' convenience.

\begin{prop}\label{ball1}
	Let $\{z_{n}\}$ be a sequence of measurable functions $z_{n}: \Omega\to\mathbb{R}^{m}$. Then there exist a subsequence $\{z_{n_{k}}\}$ and $\nu \in L^{\infty}_{\omega}(\Omega;\mathcal{M}(\mathbb{R}^{m}))$ with $\nu_{x}\geqslant 0$ for a.e. $x\in \Omega$, such that $\delta_{z_{n_{k}}}\, \overset{\ast}{\rightharpoonup} \, \nu$ in $L^{\infty}_{\omega}(\Omega;\mathcal{M}(\mathbb{R}^{m}))$ and the following properties hold:	
	\begin{itemize}
	\item[i)]  $\|\nu_{x}\|_{\mathcal{M}(\mathbb{R}^{m})} = \nu_{x}(\mathbb{R}^{m}) \leqslant 1$ for a.e. $x\in\Omega$;
		\item[ii)] if $\textup{dist}(z_{n_{k}}, K) \to 0$ in measure for some closed set $K\subset \mathbb{R}^{m}$, then $\textup{Supp}(\nu_{x}) \subset K$ for a.e. $x\in \Omega$;
				\item[iii)]  $\|\nu_{x}\|_{\mathcal{M}(\mathbb{R}^{m})} = 1$ if and only if there exists a Borel function $g: \mathbb{R}^{m}\to [0,+\infty]$ such that 
		\begin{equation*}
		\lim_{|\xi|\to +\infty} g(\xi) = +\infty \;\; \textup{and} \;\; \sup_{k\in \mathbb{N}} \int_{\Omega} g(z_{n_{k}}(x)) dx < +\infty;
		\end{equation*}
		\item[iv)] if $f: \Omega\times\mathbb{R}^{m}\to [0,+\infty]$ is a normal integrand, then 
		\begin{equation*}
		\liminf_{k\to +\infty} \int_{\Omega} f(x, z_{n_{k}}(x)) dx \geqslant \int_{\Omega}\int_{\mathbb{R}^{m}} f(x,\xi) d\nu_{x}(\xi) dx;
		\end{equation*}
		\item[v)] if (iii) holds and if $f: \Omega\times\mathbb{R}^{m}\to [0,+\infty]$ is a Carath\'{e}odory integrand such that the sequence $\{f(\cdot, z_{n_{k}})\}$ is equi-integrable then 
		\begin{equation*}
		\liminf_{k\to +\infty} \int_{\Omega} f(x, z_{n_{k}}(x)) dx = \int_{\Omega}\int_{\mathbb{R}^{m}} f(x,\xi) d\nu_{x}(\xi) dx.
		\end{equation*} 
	\end{itemize}
\end{prop}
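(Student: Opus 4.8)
The plan is to run the classical proof of the Fundamental Theorem (as in \cite{ball1} and \cite[Theorem 2.2]{FMAq}), adapted to the duality $L^{\infty}_{\omega}(\Omega;\mathcal{M}(\mathbb{R}^{m}))=\big(L^{1}(\Omega;\mathcal{C}_{0}(\mathbb{R}^{m}))\big)^{\ast}$ recalled above. First, compactness: for a.e.\ $x$ one has $\|\delta_{z_{n}(x)}\|_{\mathcal{M}(\mathbb{R}^{m})}=1$, so $\{\delta_{z_{n}}\}$ lies in the closed unit ball of the dual of the separable space $L^{1}(\Omega;\mathcal{C}_{0}(\mathbb{R}^{m}))$ (separability following from the density of the span of the tensor products $\varphi\otimes\psi$). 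The sequential Banach--Alaoglu theorem produces a subsequence $\{z_{n_{k}}\}$ and $\nu\in L^{\infty}_{\omega}(\Omega;\mathcal{M}(\mathbb{R}^{m}))$ with $\delta_{z_{n_{k}}}\overset{\ast}{\rightharpoonup}\nu$. Testing against $\varphi\otimes\psi$ with $\varphi\geq 0$ in $L^{1}(\Omega)$ and $\psi\geq 0$ in $\mathcal{C}_{0}(\mathbb{R}^{m})$, and letting $\psi$ range over a countable dense subset of the positive cone, gives $\nu_{x}\geq 0$ a.e.; moreover, picking $\psi_{j}\in\mathcal{C}_{0}(\mathbb{R}^{m})$ with $0\leq\psi_{j}\uparrow 1$, one gets $\int_{\Omega}\varphi\,\langle\nu_{x},\psi_{j}\rangle\,dx=\lim_{k}\int_{\Omega}\varphi\,\psi_{j}(z_{n_{k}})\,dx\leq\int_{\Omega}\varphi\,dx$ for $\varphi\geq 0$, hence $\langle\nu_{x},\psi_{j}\rangle\leq 1$ a.e., and monotone convergence yields $\nu_{x}(\mathbb{R}^{m})\leq 1$, which is (i).

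For (ii), if $\textup{dist}(z_{n_{k}},K)\to 0$ in measure then for each $\psi\in\mathcal{C}_{c}(\mathbb{R}^{m})$ with $\textup{supp}\,\psi\cap K=\emptyset$ the functions $\psi(z_{n_{k}})$ tend to $0$ in measure; being uniformly bounded with $\mathcal{L}^{N}(\Omega)<\infty$, dominated convergence gives $\int_{\Omega}\varphi\,\psi(z_{n_{k}})\,dx\to 0$ for every $\varphi\in L^{1}(\Omega)$, so $\langle\nu_{x},\psi\rangle=0$ a.e.; letting $\psi$ run over a countable family exhausting $\mathbb{R}^{m}\setminus K$ gives $\textup{Supp}(\nu_{x})\subset K$ a.e.

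The substance of the statement is (iii). The implication ``$g$ exists $\Rightarrow$ $\nu_{x}\in\mathcal{P}(\mathbb{R}^{m})$'' is elementary: with $M:=\sup_{k}\int_{\Omega}g(z_{n_{k}})\,dx<\infty$ and $m(R):=\inf_{|\xi|\geq R}g(\xi)\to+\infty$, Chebyshev gives $\mathcal{L}^{N}(\{|z_{n_{k}}|>R\})\leq M/m(R)$; choosing cut-offs $\psi_{R}\in\mathcal{C}_{c}(\mathbb{R}^{m})$ with $0\leq\psi_{R}\leq 1$ and $\psi_{R}\equiv 1$ on $B_{R}$, one gets $\int_{\Omega}(1-\langle\nu_{x},\psi_{R}\rangle)\,dx=\lim_{k}\int_{\Omega}(1-\psi_{R}(z_{n_{k}}))\,dx\leq M/m(R)$, so letting $R\to\infty$ and using (i) forces $\nu_{x}(\mathbb{R}^{m})=1$ a.e. The converse is the main obstacle: one must manufacture a single admissible $g$ out of the sole hypothesis $\nu_{x}\in\mathcal{P}(\mathbb{R}^{m})$ a.e. Here I would first deduce tightness of $\{z_{n_{k}}\}$, namely $\theta(R):=\sup_{k}\mathcal{L}^{N}(\{|z_{n_{k}}|>R\})\to 0$ as $R\to\infty$: for fixed $R$, $\limsup_{k}\mathcal{L}^{N}(\{|z_{n_{k}}|>R\})\leq\int_{\Omega}(1-\langle\nu_{x},\psi_{R}\rangle)\,dx=:\eta(R)$, while $\eta(R)\to\int_{\Omega}(1-\nu_{x}(\mathbb{R}^{m}))\,dx=0$, and a diagonal extraction turns this into control of $\theta$ itself. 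Then, choosing $R_{j}\uparrow\infty$ with $\theta(R_{j})$ decaying fast enough, one builds by a de la Vallée--Poussin-type construction an increasing, superlinear, piecewise-linear $g$ growing slowly on each shell $\{R_{j}\leq|\xi|<R_{j+1}\}$ relative to $\theta(R_{j})$, so that $\sup_{k}\int_{\Omega}g(z_{n_{k}})\,dx<\infty$. This bookkeeping is routine in spirit but is the genuinely technical point; I would carry it out or simply invoke \cite{ball1,FMAq}.

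Finally, (iv) and (v). A non-negative normal integrand $f$ is the pointwise supremum of a countable family of bounded Carath\'eodory integrands with compact support in $\xi$ (truncate and cut off a defining family of Carath\'eodory functions), and each such $h$ belongs to $L^{1}(\Omega;\mathcal{C}_{0}(\mathbb{R}^{m}))$; since $\int_{\Omega}f(x,z_{n_{k}})\,dx\geq\int_{\Omega}h(x,z_{n_{k}})\,dx\to\int_{\Omega}\langle\nu_{x},h\rangle\,dx$, taking the supremum over $h$ (monotone convergence on the right) gives (iv). For (v), assuming (iii) and equi-integrability of $\{f(\cdot,z_{n_{k}})\}$, write $\int_{\Omega}f(x,z_{n_{k}})\,dx$ as $\int_{\Omega}\min\{f(x,z_{n_{k}}),L\}\,\psi_{R}(z_{n_{k}})\,dx$ plus a remainder bounded by $\int_{\{f(\cdot,z_{n_{k}})>L\}}f(x,z_{n_{k}})\,dx+\int_{\{|z_{n_{k}}|>R\}}f(x,z_{n_{k}})\,dx$; the first term converges to $\int_{\Omega}\langle\nu_{x},\min\{f,L\}\psi_{R}\rangle\,dx\leq\int_{\Omega}\langle\nu_{x},f\rangle\,dx$ by the weak-$\ast$ convergence (the integrand being bounded Carath\'eodory with compact $\xi$-support), while the two remainders are uniformly small as $L,R\to\infty$ because the sets $\{f(\cdot,z_{n_{k}})>L\}$ and $\{|z_{n_{k}}|>R\}$ have measure vanishing uniformly in $k$ (the latter by the tightness from (iii)) and $\{f(\cdot,z_{n_{k}})\}$ is equi-integrable. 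Hence $\limsup_{k}\int_{\Omega}f(x,z_{n_{k}})\,dx\leq\int_{\Omega}\langle\nu_{x},f\rangle\,dx$, which together with (iv) gives the equality in (v).
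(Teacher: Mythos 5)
The paper gives no proof of this proposition: it is quoted verbatim from the literature ``for the readers' convenience'', with the argument deferred to \cite{ball1} and \cite[Theorem 2.2]{FMAq}. Your reconstruction is precisely the classical proof from those references (Banach--Alaoglu in the duality $L^{\infty}_{\omega}(\Omega;\mathcal{M}(\mathbb{R}^{m}))=\bigl(L^{1}(\Omega;\mathcal{C}_{0}(\mathbb{R}^{m}))\bigr)^{\ast}$, cut-offs $\psi_{R}$ for (i)--(iii), monotone approximation by bounded Carath\'eodory integrands for (iv), truncation plus tightness plus equi-integrability for (v)) and is correct; the one step you leave as a sketch --- the de la Vall\'ee--Poussin construction of $g$ in the forward direction of (iii) --- is standard, and the outline you give (tightness $\theta(R)\to 0$, then $g=\sum_{j}\mathbf{1}_{\{|\xi|\ge R_{j}\}}$ with $\sum_{j}\theta(R_{j})<\infty$) is the right one.
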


\color{black}

We are interested in extending the results of \cite{kinder1, kinder2, baba1} to the Orlicz setting, describing the Young measure generated by $\{\nabla u_{\varepsilon}\}$ and by the pair $\{(\langle\frac{\cdot}{\varepsilon}\rangle, \nabla u_{\varepsilon})\}$, respectively, for some sequence $\{u_{\varepsilon}\} \subset W^{1}L^{\Phi}(\Omega;\mathbb{R}^{d})$.

Concerning the second case, we recall that, if  $\mu \in L^{\infty}_{\omega}(\Omega;\mathcal{M}(\mathbb{R}^{N}\times\mathbb{R}^{d\times N}))$ and $\{u_{\varepsilon}\} \subset W^{1}L^{\Phi}(\Omega;\mathbb{R}^{d})$ is such that $\{(\langle\frac{\cdot}{\varepsilon}\rangle, \nabla u_{\varepsilon})\}$ generates the Young measure $\{\mu_{x}\}_{x\in\Omega}$, since  Riemann-Lebesgue Lemma (see e.g. \cite[Example 3]{LNW}) entails that $\{\langle\frac{\cdot}{\varepsilon}\rangle\}$ generates the homogeneous Young measure $dy := \mathcal{L}^{N}\lfloor Y$ (restriction of the Lebesgue measure to $Y$),  then by the Disintegration Theorem of a measure on a product (see \cite{vala1}), there exists a map $\nu \in L^{\infty}_{\omega}(\Omega\times Y;\mathcal{M}(\mathbb{R}^{d\times N}))$ with $\nu_{(x,y)} \in \mathcal{P}(\mathbb{R}^{d\times N})$ for a.e. $(x,y) \in \Omega\times Y$ and such that $\mu_{x} = \nu_{(x,y)}\otimes dy$ for a.e. $x\in\Omega$, i.e. 
\begin{equation*}
\int_{\mathbb{R}^{N}\times\mathbb{R}^{d\times N}} \phi(y,\xi) d\mu_{x}(y,\xi) = \int_{Y}\int_{\mathbb{R}^{d\times N}} \phi(y,\xi) d\nu_{(x,y)}(\xi) dy
\end{equation*}
for every $\phi\in \mathcal{C}_{0}(\mathbb{R}^{N}\times\mathbb{R}^{d\times N})$.
\par Following \cite{pedregal3} and \cite{baba1}, the family $\{\nu_{(x,y)}\}_{(x,y)\in\Omega\times Y}$ is called \textit{two-scale $W^{1}L^{\phi}$-(gradient) Young measure} associated to  $\{u_n\}$ at scale $\varepsilon$, while the Young measure generated by $\{\nabla u_\varepsilon\}\subset W^{1}L^{\Phi}(\Omega;\mathbb R^d)$ will be simply called $W^{1}L^{\Phi}$-gradient Young measure, to emphasize that they are extensions to the Orlicz-Sobolev setting of the notions of gradient Young measures introduced in \cite{kinder1, kinder2}, while the two-scale $W^{1}L^{\Phi}$-gradient Young measures can be seen as an extension of \cite[Definition 2.4]{pedregal3} to sequences of gradients of fields in $W^{1}L^{\phi}$. More precisely we have the following definition.

\color{black}
\begin{defn}\label{c2eq1} \textit{(two-scale) $W^{1}L^{\Phi}$- gradient Young measure} \\ 
\begin{itemize}
	\item[i)] Let $\lambda \in L^{\infty}_{\omega}(\Omega;\mathcal{M}(\mathbb{R}^{d\times N}))$. The family $\{\lambda_{x}\}_{x\in\Omega}$ is said to be a $W^{1}L^{\Phi}$-gradient Young measure if $\lambda_{x} \in \mathcal{P}(\mathbb{R}^{d\times N})$ for a.e. $x \in \Omega$ and if for every sequence $\{\varepsilon_{n}\} \to 0$ there exist a bounded sequence $\{u_{n}\}$ in $W^{1}L^{\Phi}(\Omega;\mathbb{R}^{d})$ such that $\{\nabla u_{n}\}$ generates the Young measure $\{\lambda_{x}\}_{x\in\Omega}$ i.e. for every $z\in L^{1}(\Omega)$ and $\varphi\in\mathcal{C}_{0}(\mathbb{R}^{d\times N})$, 
	\begin{equation*}
		\lim_{n\to +\infty} \int_{\Omega} z(x) \varphi\left(\nabla u_{n}(x)\right) dx = \int_\Omega\int_{\mathbb{R}^{d\times N}} z(x) \varphi(\xi) d\lambda_{x}(\xi) dx. 
	\end{equation*}
$\{\lambda_{x}\}_{x\in\Omega}$ is also called the $W^{1}L^{\Phi}$- Young gradient measure associated to $\{\nabla u_n\}$.
	\item[ii)]	Let $\nu \in L^{\infty}_{\omega}(\Omega\times Y;\mathcal{M}(\mathbb{R}^{d\times N}))$. The family $\{\nu_{(x,y)}\}_{(x,y)\in\Omega\times Y}$ is said to be a two-scale $W^{1}L^{\Phi}$-gradient Young measure if $\nu_{(x,y)} \in \mathcal{P}(\mathbb{R}^{d\times N})$ for a.e. $(x,y) \in \Omega\times Y$ and if for every sequence $\{\varepsilon_{n}\} \to 0$ there exist a bounded sequence $\{u_{n}\}$ in $W^{1}L^{\Phi}(\Omega;\mathbb{R}^{d})$ such that $\{(\langle\frac{\cdot}{\varepsilon_{n}}\rangle, \nabla u_{n})\}$ generates the Young measure $\{\nu_{(x,y)}\otimes dy\}_{x\in\Omega}$ i.e. for every $z\in L^{1}(\Omega)$ and $\varphi\in\mathcal{C}_{0}(\mathbb{R}^{N}\times\mathbb{R}^{d\times N})$, 
	\begin{equation*}
	\lim_{n\to +\infty} \int_{\Omega} z(x) \varphi\left(\langle\dfrac{x}{\varepsilon_{n}}\rangle, \nabla u_{n}(x)\right) dx = \int_{\Omega}\int_{Y}\int_{\mathbb{R}^{d\times N}} z(x) \varphi(y,\xi) d\nu_{(x,y)}(\xi) dy dx. 
	\end{equation*}
	In this case $\{\nu_{(x,y)}\}_{(x,y)\in\Omega\times Y}$ is also called the two-scale $W^{1}L^{\Phi}$- gradient Young measure associated to $\{\nabla u_n\}$.
	\end{itemize} 
\end{defn}
\begin{rem}\label{linkrem}
Putting together (i) and (ii) above, we have that
\begin{equation}\label{tsYMaveYM}
	\int_{\Omega}\int_{Y}\int_{\mathbb{R}^{d\times N}} z(x) \varphi(\xi) d\nu_{(x,y)}(\xi) dy dx=\int_\Omega\int_{\mathbb{R}^{d\times N}} z(x) \varphi(\xi) d\lambda_{x}(\xi) dx,
	\end{equation}
for every $\varphi \in C_0(\mathbb R^{d \times N})$, since on the left hand side we are integrating on $\Omega$, the function $\varphi$ can be considered as constant with respect the $y$ variable on $\Omega$,   and $z \in L^1(\Omega)$, 
thus, by the Fundamental Theorem on Calculus of Variations, one can conclude that $\lambda_x = \nu_{x,y}\otimes \frac{1}{\mathcal L^N(Y)}dy$, for a.e. $x \in \Omega$.
\end{rem}
\begin{ex}
	Let $\{\varepsilon_{n}\}$ be a sequence of positive numbers converging $0$. We will show some examples of gradient $W^{1}L^\Phi$  and two-scale gradient $ W^{1}L^\Phi$- Young measures.
	
First, if $N=2$ and $d=1$ we can consider the function $u_{\varepsilon_n}(x_1,x_2)= {\varepsilon_n}\varrho\left(\frac{x_1}{\varepsilon}\right)x_2$ with $\varrho$  defined in $[0,2]$ as follows 
$\varrho(x)=\left\{\begin{array}{ll}
x &\hbox{ if } x \in [0,1],\\
2-x &\hbox{ if }x \in [1,2]\end{array}\right.$
and then  periodically extended to $\mathbb R$. 

Then the sequence $\{\nabla u_{\varepsilon_n}\}$ generates the measure $(\frac{1}{2}\delta_{-x_2}+ \frac{1}{2}\delta_{x_2}, 0)$.

\medskip
The same examples provided in \cite[Examples 2.4 and 2.5]{baba1} guarantee existence of gradient Young measures generated by sequences in Orlicz-Sobolev spaces. We sketch them for the readers' convenience.

	Consider  $u : \Omega\to \mathbb{R}^{d}$ and $u_{1} : \Omega\times\mathbb{R}^{N}\to\mathbb{R}^{d}$ smooth functions such that $u_{1}(x,\cdot)$ is $Y$-periodic for all $x\in\Omega$. Define 
	\begin{equation*}
	u_{n}(x) := u(x) + \varepsilon_{n} u_{1}\left(x,\dfrac{x}{\varepsilon_{n}}\right).
	\end{equation*}
	Then, the two-scale $W^{1}L^{\Phi}$- gradient Young measure $\{\nu_{(x,y)}\}_{(x,y)\in\Omega\times Y}$ associated to $\{\nabla u_{n}\}$ is given by 
	\begin{equation*}
	\nu_{(x,y)} := \delta_{\nabla u(x)+\nabla_{y}u_{1}(x,y)} \quad \textup{for\,all}\, (x,y)\in\Omega\times Y.
	\end{equation*}
	Indeed, we follow line by line the arguments in \cite[Example 2.4]{baba1}, replacing \cite[Example 3]{LNW} by \cite[Proposition 4.3]{tacha1}.
	
	In particular, if $\varphi \in C_0(\mathbb R^{d\times N})$ and $\psi$ is a smooth function with compact support containing $\Omega$, such that $\theta =1$ in $\Omega$, then $\theta \varphi \in C_0(\mathbb R^d \times \mathbb R^{d \times N})$ and  
 the	$L^\Phi$- Young measure generated by $\{\nabla u_n\}$  can be computed as follows
		\begin{align*}
				&\displaystyle{\lim_{n\to +\infty} \int_{\Omega} \psi(x) \theta\left(\langle\dfrac{x}{\varepsilon_{n}}\rangle\right)\varphi\left( \nabla u_{n}(x) + \nabla_{y}u_{1}\left(x,\dfrac{x}{\varepsilon_{n}}\right) \right) dx }\\
			  = &\displaystyle{\int_{\Omega}\int_{Y} \psi(x) \varphi(\nabla u(x) + \nabla_{y}u_{1}(x,y)) dy dx} 
			\end{align*}
and it coincides with $\int_Y\delta_{\nabla u(x)+\nabla_y u_1(x,y)} dy$.

	Consider $\{\varepsilon_{n}\} \to 0$,  $u : \Omega\to \mathbb{R}^{d}$ and $u_{2} : \Omega\times\mathbb{R}^{N}\times \mathbb R^N\to\mathbb{R}^{d}$ smooth functions such that $u_{2}(x,\cdot,\cdot)$ is separately $Y$-periodic for all $x\in\Omega$. Define 
\begin{equation*}
	u_{n}(x) := u(x) + \varepsilon^2_{n} u_{2}\left(x,\frac{x}{\varepsilon_{n}},\frac{x}{\varepsilon^2_{n}}\right).
\end{equation*}
Moreover, arguing as in \cite[Example 2.5]{baba1} and exploiting the arguments used in \cite[Corollary 4.3]{FTNZDIE}, we obtain that 
the (two-scale )$L^{\Phi}$-Young measure associated to $\{\nabla u_n\}$ is
$\nu(x,y) =\int_Y \delta_{\nabla u(x)+ \nabla_z u_2(x,y,z)}dz$ for a.e. $(x,y) \in \Omega \times Y$,
while the $W^1L^{\Phi}$-gradient Young measure is
$\lambda_x =\iint_{Y\times Y} \delta_{\nabla u(x)+ \nabla_z u_2(x,y,z)}dzdy$ for a.e. $x \in \Omega$.
\end{ex}

\begin{rem}\label{c2rem2}
Arguing as in \cite[Remark 2.6]{baba1} it results that for every
	 $\{\varepsilon_{n} \}$, $\{u_{n} \}$ and $\nu$ as in Definition \ref{c2eq1}, without loss of generality  we can say that there exists $u\in W^{1}L^{\Phi}(\Omega;\mathbb{R}^{d})$ (with zero average), called \textit{underlying deformation} of $\{\nu_{(x,y)}\}_{(x,y)\in\Omega\times Y}$, such that 
	\begin{equation*}
	\nabla u(x) = \int_{Y} \int_{\mathbb{R}^{d\times N}} \xi d\nu_{(x,y)}(\xi)dy\quad \hbox{ a.e. in } \Omega,
	\end{equation*}
	
	Clearly the above is the two-scale version of the equality
	$$
		\nabla u(x) = \int_{\mathbb{R}^{d\times N}} \xi d\lambda_{x}(\xi)\hbox{ a.e.  in }\Omega,
	$$
which holds	for $\{\lambda_x\}_{x \in \Omega}$ being the $W^{1}L^{\Phi}$-Young measure generated by the sequence $\{\nabla u_n\}$, weakly converging to $\nabla u$ in $W^{1}L^{\Phi}(\Omega;\mathbb R^d)$,
and 
$u_{n}\, \rightharpoonup\, u $ in $W^{1}L^{\Phi}(\Omega;\mathbb{R}^{d})$. 
\end{rem}

\par The following lemma, whose proof is quite standard, (see e.g. \cite[Lemma 2.7]{baba1}), proves that there is no loss of generality in assuming that sequences of generators in Definition \ref{c2eq1} match the boundary condition of the underlying deformation.
\begin{lem}\label{c2lem1}
	Let $\{\varepsilon_{n}\} \to 0$ and $\{u_{n}\}\subset W^{1}L^{\Phi}(\Omega;\mathbb{R}^{d})$ be such that $u_{n}\, \rightharpoonup\, u $ in $W^{1}L^{\Phi}(\Omega;\mathbb{R}^{d})$ for some $u \in W^{1}L^{\Phi}(\Omega;\mathbb{R}^{d})$. Suppose that $\{\nabla u_n\}$ and $\{(\langle\cdot/\varepsilon_{n}\rangle, \nabla u_{n})\}$ generates the $W^{1}L^{\Phi}$-Young measure $\{\lambda_x\}_{x \in \Omega}$ and $\{\nu_{(x,y)}\otimes dy\}_{x\in\Omega}$, respectively. Then there exists a sequence $\{v_{n}\}\subset W^{1}L^{\Phi}(\Omega;\mathbb{R}^{d})$ such that $v_{n}\, \rightharpoonup\, u $ in $W^{1}L^{\Phi}(\Omega;\mathbb{R}^{d})$, $v_{n}=u$ on a neighborhood of $\partial\Omega$ and $\{\nabla v_n\}$ and  $\{(\langle\cdot/\varepsilon_{n}\rangle, \nabla v_{n})\}$ also generates $\{\lambda_x\}_{x\in \Omega}$ and $\{\nu_{(x,y)}\otimes dy\}_{x\in\Omega}$, respectively.
\end{lem}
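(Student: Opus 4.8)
The plan is to run the classical boundary‑adjustment argument for gradient Young measures (exactly as in \cite[Lemma 2.7]{baba1}), the only Orlicz‑specific ingredients being the compact embedding $W^1L^{\Phi}(\Omega;\mathbb R^d)\hookrightarrow L^{\Phi}(\Omega;\mathbb R^d)$ in place of Rellich--Kondrachov, and modular convergence (equivalent to norm convergence since $\Phi\in\Delta_2$) in place of $L^p$ convergence.

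\emph{Step 1 (compactness and cut‑offs).} Since $\Omega$ is bounded with Lipschitz boundary and $\Phi\in\Delta_2\cap\nabla_2$, the embedding $W^1L^{\Phi}(\Omega;\mathbb R^d)\hookrightarrow L^{\Phi}(\Omega;\mathbb R^d)$ is compact, so from $u_n\rightharpoonup u$ in $W^1L^{\Phi}$ we get $u_n\to u$ strongly in $L^{\Phi}(\Omega;\mathbb R^d)$ and, by $\Delta_2$, $\int_\Omega\Phi(|u_n-u|)\,dx\to 0$. Fix an increasing exhaustion $\Omega_k\subset\subset\Omega_{k+1}\subset\subset\Omega$ with $\bigcup_k\Omega_k=\Omega$ (hence $\mathcal L^{N}(\Omega\setminus\Omega_k)\to0$) and $\theta_k\in\mathcal D(\Omega)$ with $0\le\theta_k\le1$, $\theta_k\equiv1$ on $\Omega_k$. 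Set
\[
v_n^k:=u+\theta_k(u_n-u)=\theta_k u_n+(1-\theta_k)u\in W^1L^{\Phi}(\Omega;\mathbb R^d).
\]
By construction $v_n^k=u$ on the neighbourhood $\Omega\setminus\operatorname{supp}\theta_k$ of $\partial\Omega$, and, since $\nabla v_n^k=(1-\theta_k)\nabla u+\theta_k\nabla u_n+\nabla\theta_k\otimes(u_n-u)$, one has $\nabla v_n^k=\nabla u_n$ on $\Omega_k$.

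\emph{Step 2 (fixed $k$, $n\to\infty$).} As $\|\nabla\theta_k\|_\infty<\infty$, Step 1 gives $\nabla\theta_k\otimes(u_n-u)\to0$ in $L^{\Phi}$; hence $\{v_n^k\}_n$ is bounded in $W^1L^{\Phi}$ and $v_n^k-u=\theta_k(u_n-u)\to0$ in $L^{\Phi}$. By reflexivity of $W^1L^{\Phi}$ (valid since $\Phi\in\Delta_2\cap\nabla_2$) and uniqueness of weak limits, $v_n^k\rightharpoonup u$ in $W^1L^{\Phi}(\Omega;\mathbb R^d)$ as $n\to\infty$; moreover $\|\nabla\theta_k\|_\infty\|u_n-u\|_{L^{\Phi}}\to0$ as $n\to\infty$ for each fixed $k$.

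\emph{Step 3 (diagonalisation and conclusion).} Using the metrizability of the weak topology on bounded subsets of the separable reflexive space $W^1L^{\Phi}$ together with the convergences in Step 2, a standard diagonal extraction yields a nondecreasing $k(n)\to+\infty$ such that $v_n:=v_n^{k(n)}$ satisfies $v_n\rightharpoonup u$ in $W^1L^{\Phi}(\Omega;\mathbb R^d)$, $\sup_n\|v_n\|_{W^1L^{\Phi}}<\infty$ (it suffices to also require $\|\nabla\theta_{k(n)}\|_\infty\|u_n-u\|_{L^{\Phi}}\le1$, possible by Step 2) and $v_n=u$ on the neighbourhood $\Omega\setminus\operatorname{supp}\theta_{k(n)}$ of $\partial\Omega$. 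Finally, $\nabla v_n=\nabla u_n$ on $\Omega_{k(n)}$, so $\{\nabla v_n\}$ and $\{\nabla u_n\}$ — and likewise $\{(\langle\cdot/\varepsilon_n\rangle,\nabla v_n)\}$ and $\{(\langle\cdot/\varepsilon_n\rangle,\nabla u_n)\}$ — differ only on $\Omega\setminus\Omega_{k(n)}$, whose Lebesgue measure tends to $0$. Two sequences of measurable functions coinciding outside sets of vanishing measure generate the same Young measure (immediate from the absolute continuity of $x\mapsto\|\psi(x,\cdot)\|_{\mathcal C_0}$ for $\psi\in L^1(\Omega;\mathcal C_0)$), so $\{\nabla v_n\}$ generates $\{\lambda_x\}_{x\in\Omega}$ and $\{(\langle\cdot/\varepsilon_n\rangle,\nabla v_n)\}$ generates $\{\nu_{(x,y)}\otimes dy\}_{x\in\Omega}$, which is what we want.

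\emph{Main obstacle.} No step is genuinely deep: the two points requiring care are invoking the Orlicz analogue of the Rellich--Kondrachov theorem (classical, but one must check the hypotheses $\Omega$ Lipschitz and $\Phi\in\Delta_2\cap\nabla_2$) and arranging the diagonal extraction in Step 3 so that weak convergence, boundedness in $W^1L^{\Phi}$, the boundary identity, and the vanishing of $\mathcal L^{N}(\Omega\setminus\Omega_{k(n)})$ hold simultaneously; the remainder parallels \cite[Lemma 2.7]{baba1} verbatim.
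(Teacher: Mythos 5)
Your proof is correct and is essentially the argument the paper has in mind: the paper gives no proof of its own but defers to the standard cut-off construction of \cite[Lemma 2.7]{baba1}, which is exactly your Steps 1--3 (cut-off $v_n^k=u+\theta_k(u_n-u)$, compact embedding to kill the term $\nabla\theta_k\otimes(u_n-u)$, diagonal extraction, and the observation that sequences coinciding off sets of vanishing measure generate the same Young measure). The only Orlicz-specific points you need --- compactness of $W^1L^{\Phi}\hookrightarrow L^{\Phi}$ on a bounded Lipschitz domain and reflexivity/separability under $\Delta_2\cap\nabla_2$ --- are exactly the ones you invoke, so nothing is missing.
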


\color{black}
$W^{1}L^{\Phi}$-gradient  Young measures $\{\lambda_x\}_{x\in \Omega} \in L^\infty_\omega(\Omega;\mathcal M(\mathbb R^{d \times N}))$ and (two-scale) $W^{1}L^{\Phi}$-gradient Young measures $\{\nu_{(x,y)}\}_{(x,y)\in\Omega\times Y}$ are said to be homogeneous if the  $x \mapsto \lambda_x$ and $(x,y)\mapsto \nu_{(x,y)}$ are independent of $x$, hence $\lambda$ and $\nu$ can be identified with elements of $\mathcal M(\mathbb R^{d \times N})$ and of $L^{\infty}_{\omega}(Y;\mathcal{M}(\mathbb{R}^{d\times N}))$, respectively with $\lambda \equiv \{\lambda_x\}_{x \in \Omega}$ and $\{\nu_{y}\}_{y\in Y} \equiv \{\nu_{(x,y)}\}_{(x,y)\in\Omega\times Y}$, respectively. 

Next we define the average of maps $\nu \in L^{\infty}_{\omega}(\Omega\times Y;\mathcal{M}(\mathbb{R}^{d\times N}))$ for which 
 $\{\nu_{(x,y)}\}_{(x,y)\in\Omega\times Y}$ is a two-scale $W^{1}L^\Phi$ gradient-Young measure. To this end we recall \cite[Definition 2.8]{baba1}. 
 \begin{defn}\label{c2def3}
 	Let $\nu \in L^{\infty}_{\omega}(\Omega\times Y;\mathcal{M}(\mathbb{R}^{d\times N}))$ be such that
 	$\{\nu_{(x,y)}\}_{(x,y)\in\Omega\times Y}$ is a two-scale $W^1L^{\Phi}$-gradient Young measure. The average of $\{\nu_{(x,y)}\}_{(x,y)\in\Omega\times Y}$ (with respect to the variable $x$) is the family $\{\overline{\nu}_{y}\}_{y\in Y}$ defined by 
 	\begin{equation*}
 	\langle\overline{\nu}_{y},\varphi\rangle := \dashint_{\Omega} \int_{\mathbb{R}^{d\times N}} \varphi(\xi) d\nu_{(x,y)} dx
 	\end{equation*}
 	for every $\varphi\in\mathcal{C}_{0}(\mathbb{R}^{d\times N})$.
 \end{defn}
 
	In particular if $\{\nu_{(x,y)}\}_{(x,y)\in\Omega\times Y}$ is a two-scale gradient Young measure, then, as observed in \cite[Remark 2.9]{baba1} $\overline{\mu} := \overline{\nu}_{y}\otimes dy$ is the average of $\{\mu_{x}\}_{x\in\Omega}$ with $\mu_{x} := \nu_{(x,y)}\otimes dy$ and $\mu \in L^{\infty}_{\omega}(\Omega;\mathcal{M}(\mathbb{R}^{N}\times\mathbb{R}^{d\times N}))$. Thus, $\bar \mu$ is a homogeneous Young measure.

We recall the following  result, which will be used in the applications to homogenization problems. The proof is given in \cite[Lemma 2.9]{baba1} in $W^{1,p}$, but it can be reproduced word by word constructing approximating sequences belonging to Sobolev-Orlicz spaces. 
It shows that $\{\overline{\nu}_{y}\}_{y\in Y}$ is actually a homogeneous two-scale $W^1L^{\Phi}$-gradient Young measure in the case where the underlying deformation is affine.


\begin{prop}\label{c2lem3}
	Let $\nu \in L^{\infty}_{\omega}(Y\times Y;\mathcal{M}(\mathbb{R}^{d\times N}))$ be such that
	$\{\nu_{(x,y)}\}_{(x,y)\in Y\times Y}$ is a two-scale $W^1L^{\Phi}$-gradient Young measure with underlying deformation $F\cdot$, for $F\in\mathbb{R}^{d\times N}$. Then $\{\overline{\nu}_{y}\}_{y\in Y}$ is a homogeneous two-scale $W^1L^{\Phi}$-gradient Young measure with the same underlying deformation. 
\end{prop}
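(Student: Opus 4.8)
\emph{Strategy of proof.} The plan is to reproduce, in the Orlicz--Sobolev framework, the geometric construction of \cite{baba1}: starting from a generating sequence for $\{\nu_{(x,y)}\}_{(x,y)\in Y\times Y}$, one manufactures a generating sequence for $\{\overline{\nu}_{y}\}_{y\in Y}$ by tiling and rescaling in the macroscopic variable, so that each macroscopic cell of the new sequence ``sees'' a full copy of the old macroscopic behaviour, hence, in the limit, its average. Fix an arbitrary vanishing sequence $\{\delta_{n}\}$ of microscales; one has to exhibit a bounded $\{v_{n}\}\subset W^{1}L^{\Phi}(Y;\mathbb{R}^{d})$ for which $\{(\langle\cdot/\delta_{n}\rangle,\nabla v_{n})\}$ generates $\overline{\nu}_{y}\otimes dy$, which, being independent of the macroscopic variable, is exactly the assertion that $\{\overline{\nu}_{y}\}_{y\in Y}$ is a homogeneous two-scale $W^{1}L^{\Phi}$-gradient Young measure. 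First, for each $h\in\mathbb{N}$, by Definition \ref{c2eq1}, Remark \ref{c2rem2} and Lemma \ref{c2lem1} applied to the vanishing sequence $\{h\delta_{n}\}_{n}$, there is a bounded sequence $\{u_{n}^{h}\}_{n}\subset W^{1}L^{\Phi}(Y;\mathbb{R}^{d})$ — with bound independent of $h$, which can be arranged as in \cite{baba1} — such that $u_{n}^{h}\rightharpoonup F(\cdot)$ in $W^{1}L^{\Phi}$, $u_{n}^{h}=F(\cdot)$ near $\partial Y$, and $\{(\langle\cdot/(h\delta_{n})\rangle,\nabla u_{n}^{h})\}_{n}$ generates $\nu_{(x,y)}\otimes dy$.

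Since $u_{n}^{h}-F(\cdot)$ vanishes near $\partial Y$, its $Y$-periodic extension is well defined and belongs to $W^{1}L^{\Phi}_{loc}(\mathbb{R}^{N})$; denote by $U_{n}^{h}$ that extension with $F(\cdot)$ added back, and set
\[
v_{n}^{h}(x):=\tfrac1h\,U_{n}^{h}(hx),\qquad x\in Y .
\]
Then $v_{n}^{h}\in W^{1}L^{\Phi}(Y;\mathbb{R}^{d})$, $v_{n}^{h}=F(\cdot)$ near $\partial Y$, the field $\nabla v_{n}^{h}(x)$ is the $\tfrac1hY$-periodic function obtained by compressing the $Y$-periodic extension of $\nabla u_{n}^{h}$ by the factor $h$, and the change of variables $w=hx$ together with the periodicity gives $\int_{Y}\Phi(|\nabla v_{n}^{h}|)\,dx=\int_{Y}\Phi(|\nabla u_{n}^{h}|)\,dw$, while the elementary convexity estimate $\Phi(t/h)\le\Phi(t)/h$ (valid for $h\ge1$) yields $\int_{Y}\Phi(|v_{n}^{h}-F(\cdot)|)\,dx\le\tfrac1h\int_{Y}\Phi(|u_{n}^{h}-F(\cdot)|)\,dw$. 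Using $\Phi\in\Delta_{2}$ to pass between moduli and Luxemburg norms, $\{v_{n}^{h}\}_{n}$ is bounded in $W^{1}L^{\Phi}(Y;\mathbb{R}^{d})$ uniformly in $h$ and $v_{n}^{h}\rightharpoonup F(\cdot)$ as $n\to\infty$ for each fixed $h$.

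Next one identifies, for $h$ fixed, the two-scale gradient Young measure generated by $\{(\langle\cdot/\delta_{n}\rangle,\nabla v_{n}^{h})\}_{n}$. Testing against $z\in L^{1}(Y)$ and $\varphi\in\mathcal{C}_{0}(\mathbb{R}^{N}\times\mathbb{R}^{d\times N})$ and substituting $w=hx$, the integral $\int_{Y}z(x)\,\varphi(\langle x/\delta_{n}\rangle,\nabla v_{n}^{h}(x))\,dx$ turns into the mean, over the $h^{N}$ unit cells tiling $hY$, of quantities of the form $\int_{Y}z_{j}(s)\,\varphi(\langle(s+j)/(h\delta_{n})\rangle,\nabla u_{n}^{h}(s))\,ds$ with $z_{j}(s)=z((s+j)/h)$ and $j\in\mathbb{Z}^{N}$; invoking the $Y$-periodicity of the extension of $\nabla u_{n}^{h}$, the (asymptotic) invariance of $\langle\cdot\rangle$ under the integer translations $j$, and the hypothesis that $\{(\langle\cdot/(h\delta_{n})\rangle,\nabla u_{n}^{h})\}_{n}$ generates $\nu_{(x,y)}\otimes dy$, one passes to the limit $n\to\infty$ and obtains that $\{(\langle\cdot/\delta_{n}\rangle,\nabla v_{n}^{h})\}_{n}$ generates a two-scale gradient Young measure $\mu^{h}\otimes dy$ which does not depend on $x$ and is a Riemann-sum average, over the macroscopic variable, of $\nu_{(x,y)}$. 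Letting $h\to\infty$, $\mu^{h}\overset{\ast}{\rightharpoonup}\overline{\nu}_{y}$ as in Definition \ref{c2def3}; since the predual $L^{1}(Y;\mathcal{C}_{0}(\mathbb{R}^{N}\times\mathbb{R}^{d\times N}))$ is separable, weak-$\ast$ convergence is metrizable on bounded sets (equivalently, it suffices to test against countably many products $z_{k}\otimes\varphi_{k}$), so a diagonal choice $v_{n}:=v_{n}^{h(n)}$ with $h(n)\to\infty$ slowly produces a bounded sequence in $W^{1}L^{\Phi}(Y;\mathbb{R}^{d})$, equal to $F(\cdot)$ near $\partial Y$, generating $\overline{\nu}_{y}\otimes dy$. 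Finally $\int_{Y}\int_{\mathbb{R}^{d\times N}}\xi\,d\overline{\nu}_{y}(\xi)\,dy=\dashint_{Y}\nabla(Fx)\,dx=F$, in accordance with Remark \ref{linkrem}, so $F(\cdot)$ is again the underlying deformation.

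\emph{Main obstacle.} The hard part is the scale bookkeeping in the third step: one must tie the microscale of the generators $\{u_{n}^{h}\}$ to the mesh $1/h$ of the macroscopic subdivision — this is why $\{h\delta_{n}\}_{n}$ is used for $u_{n}^{h}$ — so that, after compressing by $h$, the oscillations of $v_{n}^{h}$ take place exactly at the prescribed scale $\delta_{n}$ and the fast variable still equidistributes over the full period $Y$; and one must check that the integer translations produced by tiling $hY$ do not perturb, in the limit, the joint distribution of $(\langle\cdot/(h\delta_{n})\rangle,\nabla u_{n}^{h})$. This is handled, as in \cite{baba1}, by a careful choice of the microscales together with the $Y$-periodicity of the extension and the independence of the generated Young measure from the particular vanishing sequence of microscales (in the generic case one first argues with microscales whose reciprocals are integers, for which the lattice translations act trivially on the fractional part, and then removes the restriction by a scale-change argument). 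The remaining ingredients — the uniform-in-$h$ $W^{1}L^{\Phi}$-bounds along the construction and the weak-$\ast$ metrizability used for the diagonalization — are precisely what the $\Delta_{2}\cap\nabla_{2}$ assumption on $\Phi$ (equivalently, $\Phi$ and $\widetilde{\Phi}$ both of class $\Delta_{2}$) guarantees, through the density, reflexivity and separability properties recalled in Section \ref{sect2}.
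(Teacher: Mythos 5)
Your proposal is correct and follows essentially the same route as the paper, which for this proposition simply invokes the tiling-and-rescaling construction of \cite[Lemma 2.9]{baba1} and observes that it carries over verbatim once the generating sequences are taken in $W^{1}L^{\Phi}$; your write-up in fact supplies the details (choice of microscale $h\delta_n$, periodic extension after matching the affine boundary datum via Lemma \ref{c2lem1}, the change of variables for the $\Phi$-modulars, and the diagonalization over $h$) that the paper leaves implicit. The only points needing the Orlicz structure — the uniform modular/norm bounds and the metrizability used in the diagonal argument — are correctly attributed to $\Delta_2\cap\nabla_2$.
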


\color{black}
\section{Theorems \ref{c1theo0} and  \ref{c1theo1}}\label{sect3}

We aim at presenting the arguments to prove Theorems \ref{c1theo0} and \ref{c1theo1}. To this end, we present the class of `test functions' $f$ used for characterizing $W^{1}L^{\Phi}$-gradient Young measures and two-scale $W^{1}L^{\Phi}$-gradient Young measires. Given a Young function $\Phi$ of class $\Delta_{2}$, we recall the spaces $\mathscr{E}_{\Phi}$  and $\mathcal E_{\Phi}$ introduced in \eqref{1.2} and \eqref{1.3} 

	The spaces are clearly not empty since the $\Phi(\xi)$ and the function $(y,\xi) \mapsto a(y) \Phi(|\xi|)$, where $a\in\mathcal{C}(\overline{Y})$, are in $\mathscr{E}_{\Phi}$ and in $\mathcal{E}_{\Phi}$, respectively, in particular $\mathscr E_\Phi \subset \mathcal E_\Phi$.

Standard arguments prove that $\mathscr{E}_{\Phi}$ and $\mathcal{E}_{\Phi}$ are Banach spaces under the norm
\begin{equation*}
\|f\|_{\mathcal{E}_{\Phi}} := \sup_{y\in\overline{Y},\xi\in\mathbb{R}^{d\times N}} \dfrac{|f(y,\xi)|}{1 + \Phi(|\xi|)},
\end{equation*}

Clearly if $f \in \mathscr E_\Phi$ the above norm specializes as
\begin{equation*}
	\|f\|_{\mathscr{E}_{\Phi}} := \sup_{\xi\in\mathbb{R}^{d\times N}} \dfrac{|f(\xi)|}{1 + \Phi(|\xi|)}.
\end{equation*}

In addition, $\mathscr E_{\Phi}$ and  $\mathcal{E}_{\Phi}$ are isomorphic to  $\mathcal{C}(\mathbb{R}^{d\times N}\cup\{\infty\})$ and  $\mathcal{C}(\overline{Y}\times(\mathbb{R}^{d\times N}\cup\{\infty\}))$, respectively  under the map 
\begin{equation*}\begin{array}{rcl}
\mathcal{E}_{\Phi} & \rightarrow & \mathcal{C}(\overline{Y}\times(\mathbb{R}^{d\times N}\cup\{\infty\})) \\
f & : & (y,\xi) \mapsto \left\{\begin{array}{lcl}
\dfrac{f(y,\xi)}{1 + \Phi(|\xi|)} & \textup{if} & (y,\xi) \in \overline{Y}\times \mathbb{R}^{d\times N} \\
 \lim_{|\xi|\to +\infty} \dfrac{f(y,\xi)}{1 + \Phi(|\xi|)} & \textup{if} & |\xi|=+\infty,
\end{array}  \right.
\end{array}
\end{equation*}
where $\mathbb{R}^{d\times N}\cup\{\infty\}$ denotes the one-point compactification of $\mathbb{R}^{d\times N}$, which becomes 
\begin{equation*}\begin{array}{rcl}
\mathscr E_{\Phi}  & \rightarrow & \mathcal{C}(\mathbb{R}^{d\times N}\cup\{\infty\}) \\
g & : & \xi \mapsto \left\{\begin{array}{lcl}
	\dfrac{g(\xi)}{1 + \Phi(|\xi|)} & \textup{if} &\xi \in  \mathbb{R}^{d\times N} \\
	\lim_{|\xi|\to +\infty} \dfrac{g(\xi)}{1 + \Phi(|\xi|)} & \textup{if} & |\xi|=+\infty,\end{array}  \right.\\
\end{array}
\end{equation*}
when $f$ does not depend on $y$, i.e. $f(y,\xi)= g(\xi)$ for every $(y,\xi) \in Y \times \mathbb R^{d \times N}$.
Consequently, both the spaces are separable, observing, in particular that $\mathscr E_{\Phi}$ is a closed subset of $\mathcal E_{\Phi}$. \\
Furthermore, for all $g \in \mathscr E_{\Phi}$ and $f\in \mathcal{E}_{\Phi}$, there exists a constant $c>0$ such that 
\begin{align}\label{c2eq6}
\max\{ |g(\xi)|,|f(y,\xi)| \}\leqslant c(1+\Phi(|\xi|)), \;\; \textup{for\,all}\, (y,\xi) \in \overline{Y}\times \mathbb{R}^{d\times N}. 
\end{align}
We denote by $(\mathscr{E}_{\Phi})'$ $(\mathcal{E}_{\Phi})'$ the dual spaces of $\mathscr E_{\Phi}$ and $\mathcal{E}_{\Phi}$, respectively  and the brackets $\langle\cdot,\cdot\rangle_{(\mathscr{E}_{\Phi})',\mathscr{E}_{\Phi}}$ $\langle\cdot,\cdot\rangle_{(\mathcal{E}_{\Phi})',\mathcal{E}_{\Phi}}$ stand for the duality product between $\mathscr{E}_{\Phi}$ and $(\mathscr{E}_{\Phi})'$ and between $(\mathcal{E}_{\Phi})'$ and $\mathcal{E}_{\Phi}$, respectively.

\medskip

\noindent The proof of Theorem \ref{c1theo0} and \ref{c1theo1} follows by the results in subsection \ref{subsecnec} and \ref{subsecsuff}.

\subsection{Necessity}\label{subsecnec}

We start by the necessity of the conditions i)-iii) in \eqref{c1eq10}-\eqref{c1eq40} and conditions i)-iii) in \eqref{c1eq1}-\eqref{c1eq4} that we state in two separate lemmas below, providing with more details the proof of the second, the first one being similar, and following as a corollary. 
\begin{lem}\label{c2lem0}
	Let $\Phi$ be a Young function of class $\Delta_{2}\cap \nabla_2$ and let $\lambda \in L^{\infty}_{\omega}\left(\Omega; \mathcal{M}(\mathbb{R}^{d\times N})\right)$ be such that $\{\lambda_{x}\}_{x\in \Omega}$ is a $W^{1}L^{\Phi}$-gradient Young measure. Then  
	\begin{itemize}
		\item[i)] there exists $u \in W^{1}L^{\Phi}(\Omega; \mathbb{R}^{d})$ 
		such that \eqref{c1eq10} holds;
		\item[ii)] for every $f \in \mathscr{E}_{\Phi}$, \eqref{c1eq20} holds 
		with  $\mathcal Qf$ being the quasiconvexification of $f$ in \eqref{Qfdef}.
		\item[iii)] \begin{equation*}
			x \mapsto \int_{\mathbb{R}^{d\times N}} \Phi(|\xi|) d\lambda_x(\xi) \in L^{1}(\Omega).
		\end{equation*}
	\end{itemize} 
\end{lem}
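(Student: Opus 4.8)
The plan is to prove necessity of conditions (i)--(iii) in Lemma \ref{c2lem0} by starting from the defining property of a $W^1L^\Phi$-gradient Young measure: there exist $\{\varepsilon_n\}\to 0$ and a bounded sequence $\{u_n\}\subset W^1L^\Phi(\Omega;\mathbb R^d)$ whose gradients generate $\{\lambda_x\}_{x\in\Omega}$. Since $W^1L^\Phi(\Omega;\mathbb R^d)$ is reflexive (as $\Phi\in\Delta_2\cap\nabla_2$), up to a subsequence $u_n\rightharpoonup u$ weakly in $W^1L^\Phi(\Omega;\mathbb R^d)$, and by Remark \ref{c2rem2} we may take $u$ to be the underlying deformation. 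For (i), I would test the Young measure convergence against functions of the form $z(x)\varphi(\xi)$ with $\varphi$ a smooth truncation of the identity map $\xi\mapsto\xi$ on $\mathbb R^{d\times N}$ (compactly supported, hence in $\mathcal C_0$); combining weak convergence $\nabla u_n\rightharpoonup\nabla u$ in $L^\Phi$ (hence in $L^1$) with the equi-integrability of $\{\Phi(|\nabla u_n|)\}$ coming from boundedness in $L^\Phi$ and $\Delta_2$, one passes to the limit and identifies $\int_{\mathbb R^{d\times N}}\xi\,d\lambda_x(\xi)=\nabla u(x)$ a.e., exactly as in \cite[Theorem 1.1]{baba1}.

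For (iii), the key is that the sequence $\{\Phi(|\nabla u_n|)\}$ is bounded in $L^1(\Omega)$ and equi-integrable (by the $\Delta_2$ condition and the Dunford--Pettis theorem, boundedness in $L^\Phi$ forces equi-integrability of $\Phi(|\nabla u_n|)$; alternatively one invokes property (iii)/(v) of Proposition \ref{ball1} with $g(\xi)=\Phi(|\xi|)$). Then property (v) of the Fundamental Theorem (Proposition \ref{ball1}), applied to the Carath\'eodory — indeed continuous — integrand $(x,\xi)\mapsto\Phi(|\xi|)$, gives
\begin{equation*}
\int_\Omega\int_{\mathbb R^{d\times N}}\Phi(|\xi|)\,d\lambda_x(\xi)\,dx=\lim_{n\to\infty}\int_\Omega\Phi(|\nabla u_n(x)|)\,dx<+\infty,
\end{equation*}
so $x\mapsto\int_{\mathbb R^{d\times N}}\Phi(|\xi|)\,d\lambda_x(\xi)\in L^1(\Omega)$.

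For (ii), which is the main point, I would argue the Jensen-type inequality with test functions $f\in\mathscr E_\Phi$. First observe that by (iii) and the growth bound \eqref{c2eq6} ($|f(\xi)|\le c(1+\Phi(|\xi|))$), the quantity $\int_{\mathbb R^{d\times N}}f(\xi)\,d\lambda_x(\xi)$ is well defined and in $L^1(\Omega)$. By Lemma \ref{c2lem1} we may assume the generating sequence satisfies $u_n=u$ near $\partial\Omega$, so that $u_n-u\in W^1_0L^\Phi(\Omega;\mathbb R^d)$. Fix a Lebesgue point $x_0$ of the relevant $L^1$ functions and a small ball $B_r(x_0)\subset\Omega$; blow up by setting $\varphi_n(y)=\frac{1}{r}(u_n(x_0+ry)-u(x_0+ry))$ — a competitor in the definition \eqref{Qfdef} of $\mathcal Q f$ on the rescaled domain — whose gradient is $\nabla u_n(x_0+ry)-\nabla u(x_0+ry)$. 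Using that $\{f(\nabla u_n)\}$ is equi-integrable (again by the $\Delta_2$ growth $|f(\xi)|\le c(1+\Phi(|\xi|))$ and equi-integrability of $\{\Phi(|\nabla u_n|)\}$), one has by property (v) of Proposition \ref{ball1}
\begin{equation*}
\int_{B_r(x_0)}\int_{\mathbb R^{d\times N}}f(\xi)\,d\lambda_x(\xi)\,dx=\lim_{n\to\infty}\int_{B_r(x_0)}f(\nabla u_n(x))\,dx\ge\int_{B_r(x_0)}\mathcal Q f(\nabla u_n(x))\,dx,
\end{equation*}
where the last inequality follows from the standard fact that $\mathcal Q f\le f$ combined with a localization/covering argument showing $\int_A \mathcal Q f(\nabla u + \nabla\psi)\le\int_A f(\nabla u+\nabla\psi)$ is not quite what is needed — rather one uses the characterization of $\mathcal Q f$ via $\liminf$ of energies along sequences with fixed affine boundary data (the Dacorogna formula), applied on each small ball after freezing $\nabla u$ at $x_0$. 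Dividing by $\mathcal L^N(B_r(x_0))$, sending $r\to 0$, using continuity of $\mathcal Q f$ (which is finite and quasiconvex, hence rank-one convex and locally Lipschitz) and the Lebesgue differentiation theorem on both sides, one arrives at $\int_{\mathbb R^{d\times N}}f(\xi)\,d\lambda_{x_0}(\xi)\ge\mathcal Q f(\nabla u(x_0))$ for a.e. $x_0$.

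The main obstacle is step (ii): making the blow-up/relaxation argument rigorous in the Orlicz setting. The delicate points are (a) the equi-integrability of $\{f(\nabla u_n)\}$, which must be derived from $\Phi\in\Delta_2$ and the $\mathscr E_\Phi$-growth rather than from a power bound; (b) controlling the boundary-layer corrections when passing from $u_n-u\in W^1_0L^\Phi$ on $\Omega$ to competitors on small balls, which requires an Orlicz-Sobolev cutoff/De Giorgi slicing argument (using that $\Phi\in\Delta_2$ so that truncations and multiplications by cutoffs behave well, and that norms of gradients control norms of functions via Poincar\'e); and (c) justifying the use of Dacorogna's formula for $\mathcal Q f$ with $L^\Phi$ rather than $L^p$ or $L^\infty$ test fields — here one invokes that $C^\infty_c$ competitors suffice and that $\mathcal Q f$ is unchanged if one enlarges the class of test functions to $W^1_0L^\Phi$, by density and the $\Phi$-growth of $f$. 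Since the paper states this lemma follows as a corollary of the two-scale Lemma, I would in fact present the details for the two-scale case and simply note that taking $\nu_{(x,y)}$ independent of $y$ (and $u_1=0$) recovers the present statement, with $f_{hom}$ reducing to $\mathcal Q f$.
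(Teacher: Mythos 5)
Your route is the ``direct argument'' that the paper only mentions in passing in the remark following the proof; the paper itself disposes of Lemma \ref{c2lem0} in three lines by treating it as a corollary of the two-scale Lemma \ref{c2lem2}: one averages \eqref{c1eq1} and \eqref{c1eq4} over $Y$ (using \eqref{tsYMaveYM}, under which $\nabla_y u_1$ integrates to zero), and for (ii) one restricts \eqref{c1eq2} to $f\in\mathscr E_\Phi\subset\mathcal E_\Phi$, where $f_{\rm hom}$ in \eqref{c1eq3} collapses to $\mathcal Qf$. Note that your closing sentence misstates this reduction: the two-scale measure generated by $\{(\langle\cdot/\varepsilon_n\rangle,\nabla u_n)\}$ is in general genuinely $y$-dependent, so one does not ``take $\nu_{(x,y)}$ independent of $y$ and $u_1=0$''; the passage from the two-scale statement to the one-scale one is by integration in $y$, not by specialization.

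The genuine gap in your direct argument is the equi-integrability step. You assert twice that boundedness of $\{u_n\}$ in $W^1L^\Phi$ together with $\Delta_2$ (or Dunford--Pettis) forces $\{\Phi(|\nabla u_n|)\}$ to be equi-integrable. This is false: already for $\Phi(t)=t^p$ a bounded sequence in $W^{1,p}$ can concentrate, so $|\nabla u_n|^p$ need not be equi-integrable. Boundedness only gives equi-integrability of $\{|\nabla u_n|\}$ (de la Vall\'ee Poussin, since $\Phi$ is superlinear), which suffices for (i); and for (iii) one can in any case settle for the one-sided bound from Proposition \ref{ball1}(iv). But for (ii) your chain starts from the \emph{equality} $\int\!\!\int f\,d\lambda_x\,dx=\lim_n\int f(\nabla u_n)\,dx$, and without $\Phi$-equi-integrability concentration can make $\liminf_n\int f(\nabla u_n)$ strictly larger, destroying the inequality you need. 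The paper's fix is the Decomposition Lemma of \cite{KoZa2017}: one replaces the generating sequence, up to subsequences and modifications on sets of vanishing measure (hence without changing the Young measure), by one with $\{\Phi(|\nabla u_n|)\}$ equi-integrable. Beyond that, your step (ii) leaves the lower bound $\lim_n\int_B f(\nabla u_n)\,dx\ge\int_B\mathcal Qf(\nabla u)\,dx$ as an admitted ``main obstacle'' (boundary layers, Dacorogna's formula in $L^\Phi$); the paper does not redo this blow-up but imports it wholesale from a $\Gamma$-convergence/relaxation result (\cite{FTGNZ}, or \cite[Theorems 4.7 and 4.11]{MM} in the remark), which is where the real work is hidden. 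As written, your proposal identifies the right difficulties but does not close them, and the equi-integrability justification it does offer is incorrect.
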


\begin{lem}\label{c2lem2}
	Let $\Phi$ be a Young function of class $\Delta_{2}\cap \nabla_2$ and let $\nu \in L^{\infty}_{\omega}\left(\Omega\times Y; \mathcal{M}(\mathbb{R}^{d\times N})\right)$ be such that $\{\nu_{(x,y)}\}_{(x,y)\in \Omega\times Y}$ is a two-scale  $W^{1}L^{\Phi}$-gradient Young measure. Then  
	\begin{itemize}
		\item[i)] there exist $u \in W^{1}L^{\Phi}(\Omega; \mathbb{R}^{d})$ and $u_{1} \in L^{1}(\Omega; W^{1}_{\#}L^{\Phi}_{per}(Y;\mathbb R^d))$ with $\left(u_1, \frac{\partial u_{1}}{\partial y_{i}}\right) \in L^{\Phi}(\Omega\times Y_{per};\mathbb R^{2d})$ ($1 \leq i \leq N$) such that \eqref{c1eq1} holds;
		\item[ii)] for every $f \in \mathcal{E}_{\Phi}$, \eqref{c1eq2} holds 
		with  $f_{hom}$ given by \eqref{c1eq3}
		\item[iii)] \begin{equation*}
		(x, y) \mapsto \int_{\mathbb{R}^{d\times N}} \Phi(|\xi|) d\nu_{(x,y)}(\xi) \in L^{1}(\Omega\times Y).
		\end{equation*}
	\end{itemize} 
\end{lem}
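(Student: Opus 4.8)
The plan is to fix a sequence $\{\varepsilon_n\}\to 0$ and a bounded sequence $\{u_n\}\subset W^1L^\Phi(\Omega;\mathbb R^d)$ such that $\{(\langle\cdot/\varepsilon_n\rangle,\nabla u_n)\}$ generates $\{\nu_{(x,y)}\otimes dy\}_{x\in\Omega}$, and to extract from it the three conditions. By reflexivity of $W^1L^\Phi$ (here $\Phi\in\Delta_2\cap\nabla_2$) we may pass to a subsequence with $u_n\rightharpoonup u$ in $W^1L^\Phi(\Omega;\mathbb R^d)$; Remark \ref{c2rem2} then identifies this $u$ as the underlying deformation, which gives \eqref{c1eq1} once we produce the corrector $u_1$. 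For \emph{(iii)}, I would test the definition of two-scale Young measure against truncations $\varphi_k(\xi):=\Phi(|\xi|)\wedge k$ (suitably cut off near infinity so as to land in $\mathcal C_0$), use that $\{\Phi(|\nabla u_n|)\}$ is bounded in $L^1(\Omega)$ (boundedness of $\{u_n\}$ in $W^1L^\Phi$ together with $\Phi\in\Delta_2$), apply Fatou/monotone convergence in $k$, and conclude $x\mapsto\int\Phi(|\xi|)\,d\nu_{(x,y)}(\xi)\in L^1(\Omega\times Y)$ via Proposition \ref{ball1}(iv) applied to the normal integrand $(x,y,\xi)\mapsto\Phi(|\xi|)$. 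This is essentially the argument of \cite[Theorem 1.1]{baba1}, with $|\xi|^p$ replaced by $\Phi(|\xi|)$ and the boundedness coming from the Orlicz norm.

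The construction of the corrector $u_1$ in \emph{(i)} is where the two-scale structure enters. Following \cite{baba1}, I would introduce the vector-valued function $w_n:=(\nabla u_n - \nabla u)$ and study the sequence $\langle x/\varepsilon_n\rangle \mapsto$ oscillations; more precisely, one shows that $\nabla u_n$ two-scale converges (in the Orlicz sense of \cite{tacha1}, which applies since $\Phi\in\Delta_2\cap\nabla_2$) to a limit of the form $\nabla u(x)+\nabla_y u_1(x,y)$ with $u_1\in L^1(\Omega;W^1_\#L^\Phi_{per}(Y;\mathbb R^d))$ and the stated integrability $(u_1,\partial u_1/\partial y_i)\in L^\Phi(\Omega\times Y_{per};\mathbb R^{2d})$ — here the structure theorem for two-scale limits of gradients in Orlicz-Sobolev spaces, as in \cite{tacha1} or \cite{FTGNZ}, does the work. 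One then checks that the barycenter $\int_{\mathbb R^{d\times N}}\xi\,d\nu_{(x,y)}(\xi)$ equals this two-scale limit: test the Young-measure convergence against $z(x)\theta(y)\varphi(\xi)$ with $\varphi$ (a cutoff of) the identity map and compare with the two-scale convergence of $\nabla u_n$ tested against $z(x)\theta(\langle x/\varepsilon_n\rangle)$, using equi-integrability of $\{\nabla u_n\}$ in $L^1$ (valid since the sequence is bounded in $L^\Phi$ with $\Phi$ superlinear, hence de la Vallée-Poussin applies) so that Proposition \ref{ball1}(v) upgrades the $\liminf$ to an equality.

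For \emph{(ii)}, the Jensen-type inequality with $f_{hom}$, I would argue as in \cite{baba1}: fix $f\in\mathcal E_\Phi$, extend $f(x,\cdot)$ to a Carathéodory integrand dominated by $c(1+\Phi(|\xi|))$ via \eqref{c2eq6}, and combine Proposition \ref{ball1}(iv) applied to $(x,y,\xi)\mapsto f(x,\xi)$ (a normal integrand, nonnegative after adding a constant) with the $\Gamma$-liminf / lower-semicontinuity inequality
\[
\liminf_{n\to\infty}\int_\Omega f\!\left(x,\langle x/\varepsilon_n\rangle,\nabla u_n(x)\right)dx\;\ge\;\int_\Omega f_{hom}(\nabla u(x))\,dx,
\]
which is the content of the homogenization lower bound (the cell-formula \eqref{c1eq3} being the relaxed multi-cell limit; this is available from \cite{tacha2,FTGNZ} in the Orlicz setting, or can be localized). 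Localizing this inequality over all balls $B\subset\Omega$ and using that the left-hand side is bounded below by $\int_B\int_Y\int f(x,\xi)\,d\nu_{(x,y)}(\xi)\,dy\,dx$ while the right-hand side is $\int_B f_{hom}(\nabla u)$, a Lebesgue-point argument yields \eqref{c1eq2} a.e.\ in $\Omega$.

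The main obstacle I anticipate is \emph{not} any single step but the bookkeeping needed to make all the cutoffs/truncations legitimate: the test functions in Definition \ref{c2eq1} must lie in $\mathcal C_0(\mathbb R^N\times\mathbb R^{d\times N})$, whereas $\Phi(|\xi|)$ and the identity map grow, so every use of them must go through a truncation-plus-equi-integrability argument, and one must verify at each stage that $\{u_n\}$ bounded in $W^1L^\Phi$ with $\Phi\in\Delta_2\cap\nabla_2$ indeed gives $L^1$-equi-integrability of $\{\nabla u_n\}$ and $L^1$-boundedness of $\{\Phi(|\nabla u_n|)\}$ (the $\Delta_2$ condition is exactly what makes $\|\nabla u_n\|_\Phi$ bounded $\Leftrightarrow$ $\int\Phi(|\nabla u_n|)$ bounded). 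Since the excerpt explicitly says the proof mirrors \cite[Theorem 1.1]{baba1} and that parts identical to it will not be reproduced, I would present the corrector construction and the $f_{hom}$-Jensen inequality in detail (these genuinely use Orlicz two-scale convergence from \cite{tacha1} rather than the $L^p$ theory) and compress \emph{(iii)} to a remark.
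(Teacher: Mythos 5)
Parts (i) and (iii) of your plan match the paper's argument: extract a generating sequence $u_n\rightharpoonup u$, invoke the Orlicz two-scale compactness/structure theorem of \cite{tacha1} to produce $u_1$, and identify the barycenter by testing against $\xi\cdot\phi(x,y)$ (linear growth, so $L^1$-equi-integrability of $\{\nabla u_n\}$ suffices); for (iii) your route via truncation and Proposition \ref{ball1}(iv) gives the required finiteness just as well as the paper's use of (v).

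There is, however, a genuine gap in your treatment of (ii), and it is a directional one. You propose to combine Proposition \ref{ball1}(iv), which yields
\[
\liminf_{n}\int_B f\bigl(x,\langle x/\varepsilon_n\rangle,\nabla u_n\bigr)\,dx\;\ge\;\int_B\int_Y\int_{\mathbb R^{d\times N}} f\,d\nu_{(x,y)}\,dy\,dx,
\]
with the homogenization lower bound $\liminf_n\int_B f(\cdot,\langle\cdot/\varepsilon_n\rangle,\nabla u_n)\,dx\ge\int_B f_{hom}(\nabla u)\,dx$. Both statements are lower bounds for the \emph{same} quantity $\liminf_n\int_B f$, so they cannot be chained to give $\int_B\int_Y\int f\,d\nu\ge\int_B f_{hom}(\nabla u)\,dx$. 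What is needed is the reverse relation $\int_B\int_Y\int f\,d\nu\;\ge\;\liminf_n\int_B f(\cdot,\langle\cdot/\varepsilon_n\rangle,\nabla u_n)\,dx$, i.e.\ the \emph{equality} of Proposition \ref{ball1}(v), and that requires equi-integrability of $\{f(\cdot,\langle\cdot/\varepsilon_n\rangle,\nabla u_n)\}$. Since $f\in\mathcal E_\Phi$ only satisfies $|f|\le c(1+\Phi(|\xi|))$, this forces equi-integrability of $\{\Phi(|\nabla u_n|)\}$ — which does \emph{not} follow from boundedness of $\{u_n\}$ in $W^1L^\Phi$ nor from de la Vallée--Poussin applied to $\nabla u_n$ in $L^1$ (concentration of $\Phi(|\nabla u_n|)$ is not excluded). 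The paper supplies this by replacing the generating sequence via the Decomposition Lemma in the Orlicz setting (\cite{KoZa2017}), which preserves the generated Young measure while making $\{\Phi(|\nabla u_n|)\}$ equi-integrable; your related claim that $f$ is ``nonnegative after adding a constant'' also fails, since $f$ is only bounded below by $-c(1+\Phi(|\xi|))$ (the paper instead uses the coercive truncations $f_{M,\alpha}=\max\{-M,f\}+\alpha\Phi$ to feed the $\Gamma$-liminf theorem of \cite{FTGNZ}). Once the $\Phi$-equi-integrable generating sequence and the $f_{M,\alpha}$ approximation are inserted, your localization step closes the argument as in the paper.
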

\begin{proof}
	Let $\{\nu_{(x,y)}\}_{(x,y)\in \Omega\times Y}$ be a two-scale gradient Young measure. 
	\par We start by showing that i) holds. 
	 By Definition \ref{c2def3} there exists $u\in W^{1}L^{\Phi}(\Omega; \mathbb{R}^{d})$ such that for every sequence $\{\varepsilon_{n}\}\to 0$ one can find $\{u_{n}\}\subset W^{1}L^{\Phi}(\Omega; \mathbb{R}^{d})$ such that $\{(\langle\cdot/\varepsilon_{n}\rangle, \nabla u_{n})\}$ generates the Young measure $\{\nu_{(x,y)}\otimes dy\}_{x\in \Omega}$ and $u_{n}\,\rightharpoonup\,u$ in $W^{1}L^{\Phi}(\Omega; \mathbb{R}^{d})$. 
	\par Up to a subsequence (still denoted by $u_{n}$), we can also assume that $\{\Phi(|\nabla u_{n}|)\}$ is equi-integrable (see \cite{KoZa2017}) and that there exists a function $u_{1} \in L^{1}(\Omega; W^{1}_{\#}L^{\Phi}_{per}(Y;\mathbb R^d))$ with $\left(u_1,\frac{\partial u_{1}}{\partial y_{i}}\right) \in L^{\Phi}(\Omega\times Y_{per}; \mathbb R^{2d})$ ($1 \leq i \leq N$) such that the sequence $\{\nabla u_{n}\}$ weakly two-scale converges to $\nabla u + \nabla_{y}u_{1}$ (see e.g. \cite[Theorem 4.2]{tacha1} and \cite[Remark 2]{tacha2}).  
	\par Consequently, for all $\phi \in \mathcal{C}_{c}^{\infty}(\Omega\times Y; \mathbb{R}^{d\times N})$ we have that 
		\begin{align}\label{c2eq7}
\displaystyle{	\lim_{n\to +\infty}	\int_{\Omega} \nabla u_{n}\cdot \phi\left(x, \langle\dfrac{x}{\varepsilon_{n}}\rangle\right) dx} \\
	 =\displaystyle{ \int_{\Omega}\int_{Y} \left(\nabla u(x)+ \nabla_{y}u_{1}(x,y) \right)\cdot \phi(x,y)  dy dx.  }\nonumber
	\end{align} 
	\par Set $f(x,y,\xi) = \xi\cdot\phi(x,y)$ for $(x,y,\xi) \in \Omega\times Y\times\mathbb{R}^{d\times N}$. As $f$ is a Carath\'{e}odory integrand (measurable in $x$ and continuous in $(y,\xi)$ and the sequence $\{f(\cdot, \langle\cdot/\varepsilon_{n}\rangle, \nabla u_{n}(\cdot))\}$ is equi-integrable, by Proposition \ref{ball1}, see v)) we get that 
		\begin{align}\label{c2eq8}
\displaystyle{\lim_{n\to +\infty}	\int_{\Omega} \nabla u_{n}\cdot \phi\left(x, \langle\dfrac{x}{\varepsilon_{n}}\rangle\right) dx }\\
	 = \displaystyle{\int_{\Omega}\int_{Y}\int_{\mathbb{R}^{d\times N}} \xi\cdot \phi(x,y) d\nu_{(x,y)}(\xi) dy dx.  } \nonumber
	\end{align}  
	 Consequently, from \eqref{c2eq7}-\eqref{c2eq8} we get for a.e. $(x,y)\in\Omega\times Y$
	\begin{equation*}
	\int_{\mathbb{R}^{d\times N}} \xi d\nu_{(x,y)}(\xi) = \nabla u(x) + \nabla_{y}u_{1}(x,y) 
	\end{equation*}
	which proves i).
	\par Let us see now that iii) is satisfied. As $\{\Phi(|\nabla u_{n}|) \}$ is equi-integrable then by Proposition \ref{ball1} v) we get that 
	\begin{equation*}
	 \int_{\Omega}\int_{Y}\int_{\mathbb{R}^{d\times N}} \Phi(|\xi|) d\nu_{(x,y)}(\xi) dy dx = \lim_{n\to +\infty}	\int_{\Omega} \Phi(|\nabla u_{n}|) dx < +\infty,
	\end{equation*} 
	which completes the proof of iii).
	\par Finally, condition ii) follows along the lines of \cite[Proof of Theorem 1.1]{baba1} by application of a $\Gamma$-convergence results  for homogenization of integral functionals in $W^{1}L^{\Phi}$ spaces.
	Since $f\in\mathcal{E}_{\Phi}$, it satisfies the $\Phi$-growth condition \eqref{c2eq6}, but it is not necessarily $\Phi$-coercive, but it is possible to approximate it  by a family of $\Phi$-coercive functions, i.e. for every $\alpha>0$ and $M>0$, define
	\begin{equation*}
	f_{M,\alpha}(y,\xi) := f_{M}(y,\xi) + \alpha\Phi(|\xi|)
	\end{equation*}
	  where $f_{M}(y,\xi) = \max\{-M,f(y,\xi)\}$. Then 
	  \begin{equation*}
	  \alpha\Phi(|\xi|)-M \leqslant f_{M,\alpha}(y,\xi) \leqslant (c+\alpha)(1+\Phi(|\xi|)),
	  \end{equation*}
	  for all $(y,\xi)\in \overline{\Omega}\times \mathbb{R}^{d\times N}$, with $c$ a positive constant. 
	  Then by \cite[ Theorem 1.1 ]{FTGNZ} ($\Gamma$-$\liminf$ inequality), we could first approximate $f$ by $f_{M,\alpha} \geqslant f$, as $\alpha \to 0$, and then reproduce the same chain of inequalities as in  \cite[Lemma 3.1 (iii)]{baba1}. Finally, exploiting the equi-integrability of $\Phi(|\nabla u_n|)$  we get 
	 the equi-integrability of $\{f(\langle\cdot/\varepsilon_{n}\rangle, \nabla u_{n})\}$  and the following chain of inequalities follows by
	 Proposition \ref{ball1} v)
	  \begin{align*}
	  	 \int_{A}\int_{Y}\int_{\mathbb{R}^{d\times N}} f(y,\xi) d\nu_{(x,y)}(\xi) dy dx=\\
	  \liminf_{n\to +\infty}	\int_{A} f\left(\langle\dfrac{x}{\varepsilon_{n}}\rangle, \nabla u_{n}(x)\right) dx  \geqslant  \int_{A} f_{\textup{hom}}(\nabla u(x)) dx. \nonumber
	  \end{align*}
	  A localization argument concludes the proof of ii) .
\end{proof}

\begin{proof}[Sketch of the proof of Lemma \ref{c2lem0}]
	The proofs of \eqref{c1eq10} and \eqref{c1eq40} follows averaging in $Y$ \eqref{c1eq1} and \eqref{c1eq4}, taking into account \eqref{tsYMaveYM}.
	Finally the proof of \eqref{c1eq20} is a consequence of \eqref{c1eq2} taking into account that $\mathscr E_\Phi\subset \mathcal E_\Phi$ and that $f_{\rm hom}$ in \eqref{c1eq3} reduces to $\mathcal Q f$ in \eqref{Qfdef} when $f$ does not depend on $x$.
	\end{proof}
	
	\begin{rem}Clearly the result could be proven also via a direct argument, observing that given a $W^{1}L^{\Phi}$-gradient Young measure $\lambda$ there exists $\{u_n\}\subset W^{1}L^{\Phi}(\Omega;\mathbb R^d)$ which generates it and weakly converges to $u$ in $W^{1}L^{\Phi}(\Omega;\mathbb R^d)$. Then \eqref{c2eq7} and \eqref{c2eq8} with $\phi \in C^\infty_c(\Omega,\mathbb R^d)$, together with the $\Phi$- equi-integrability gives the desired equality in \eqref{c1eq10}. The same arguments adopted to prove \eqref{c1eq4} ensure the validity of \eqref{c1eq40}.  Finally the proof of \eqref{c1eq2} could be obtained directly, replacing the $\Gamma$-convergence result in \cite{FTGNZ} by the relaxation results \cite[Theorems 4.7 and 4.11]{MM}. Indeed, under $\Delta_2\cap \nabla_2$ conditions on the function $\Phi$, the latter one satisifes the Sobolev-type property, the Rellich-type property and maximal (and density) property in \cite{MM}. Indeed the Sobolev-tye property is proven in \cite[Theorem 3.2]{DT}, the maximal property is proven in \cite{KoZa2017} and the compact embeddings are recalled in \cite{Elvira3} and the references therein. 
		
		For a relaxation result in the homogeneous case we also refer to the relaxation results that can be deduced by the dimensional reduction argument in \cite{LN}.
		
		\end{rem}
\subsection{Sufficiency}\label{subsecsuff}
Next we  show that conditions i)-iii) in \eqref{c1eq10}-\eqref{c1eq40} and in \eqref{c1eq1}-\eqref{c1eq4} are also sufficient to characterize $W^{1}L^{\Phi}$-gradient Young measures and (two-scale) $W^{1}L^{\Phi}$-gradient Young measures generated by bounded sequences in Orlicz-Sobolev spaces. As in \cite{kinder1, kinder2, baba1}, we can first consider the homogeneous case and get the result via a suitable application of Hahn-Banach Separation Theorem. The non-homogeneous ones will be obtained by approximation of two-scale $W^{1}L^\Phi$-gradient Young measures by piecewise constant ones.

\subsubsection{Homogeneous case}\label{sect3.2.1}

Our main here is to prove the following result.

\begin{lem}\label{c3lem10}
	Let $\Phi\in \Delta_{2}\cap \nabla_2$ be a Young function, $F\in \mathbb{R}^{d\times N}$ and $\lambda\in \mathcal{M}(\mathbb{R}^{d\times N})$ be such that $\nu\in \mathcal{P}(\mathbb{R}^{d\times N})$. Assume that 
	\begin{align*}
		F = \int_{\mathbb{R}^{d\times N}} \xi d\lambda(\xi),
	\\
	\mathcal Q	f(F) \leqslant \int_{\mathbb{R}^{d\times N}} f(\xi) d\lambda(\xi)
	\hbox{ for every }f\in \mathscr{E}_{\Phi}, \hbox{ and }\\
		\int_{\mathbb{R}^{d\times N}} \Phi(|\xi|) d\lambda(\xi)< +\infty.
	\end{align*}
	Then $\lambda$ is a homogeneous $W^{1}L^{\Phi}$-gradient Young measure.
\end{lem}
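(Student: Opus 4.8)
The plan is to prove the sufficiency in the homogeneous case via the Hahn--Banach separation theorem, following the strategy of \cite[Proof of Theorem 3.1]{kinder1} and its two-scale adaptation in \cite{baba1}, but carried out in the space $\mathscr{E}_\Phi$ (and its dual) rather than in a space governed by polynomial growth. First I would fix $F\in\mathbb{R}^{d\times N}$ and define the subset
\[
\mathcal{H}_F:=\left\{\mu\in(\mathscr{E}_\Phi)':\ \mu \hbox{ is a homogeneous } W^{1}L^{\Phi}\hbox{-gradient Young measure with underlying deformation } F\cdot\right\},
\]
where an element of $(\mathscr{E}_\Phi)'$ is identified, when it is a positive measure with the appropriate integrability, with a probability measure $\mu\in\mathcal{P}(\mathbb{R}^{d\times N})$ satisfying $\int_{\mathbb{R}^{d\times N}}\Phi(|\xi|)\,d\mu(\xi)<+\infty$. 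The key structural observation is that $\mathcal{H}_F$ is a (nonempty, since $\delta_F\in\mathcal{H}_F$) convex and weak-$\ast$ closed subset of $(\mathscr{E}_\Phi)'$: convexity follows because a convex combination of gradient Young measures is again a gradient Young measure (one splits $\Omega$ into two pieces of the right volume fractions and glues the generating sequences, using Lemma \ref{c2lem1} to match affine boundary data), and weak-$\ast$ closedness follows from a diagonalization argument together with the metrizability of bounded sets in $(\mathscr{E}_\Phi)'$ (recall $\mathscr{E}_\Phi\cong\mathcal{C}(\mathbb{R}^{d\times N}\cup\{\infty\})$ is separable).

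Next, arguing by contradiction, suppose $\lambda\notin\mathcal{H}_F$. Since $\lambda$ has the required integrability it defines a continuous linear functional on $\mathscr{E}_\Phi$, so it is a legitimate point of $(\mathscr{E}_\Phi)'$; by the Hahn--Banach separation theorem applied in the locally convex space $((\mathscr{E}_\Phi)',\hbox{weak-}\ast)$ there exist $f\in\mathscr{E}_\Phi$ (the predual, which is exactly the space of weak-$\ast$ continuous functionals on $(\mathscr{E}_\Phi)'$) and $\gamma\in\mathbb{R}$ such that
\[
\int_{\mathbb{R}^{d\times N}} f(\xi)\,d\lambda(\xi)<\gamma\le \int_{\mathbb{R}^{d\times N}} f(\xi)\,d\mu(\xi)\qquad\hbox{for all }\mu\in\mathcal{H}_F.
\]
Now comes the heart of the matter: I must show $\inf_{\mu\in\mathcal{H}_F}\int f\,d\mu=\mathcal{Q}f(F)$. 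The inequality ``$\ge$'' is the necessity direction already available from Lemma \ref{c2lem0}(ii) specialized to the homogeneous situation; the inequality ``$\le$'' requires exhibiting, for each $\eta>0$, a test field $\varphi\in C_c^\infty(Y;\mathbb{R}^d)$ with $\dashint_Y f(F+\nabla\varphi)\,dy\le \mathcal{Q}f(F)+\eta$ and checking that the gradient Young measure generated by (periodic repetitions and rescalings of) $F\cdot+\varphi$, after the standard homogenization/averaging of oscillations, belongs to $\mathcal{H}_F$ and has $f$-barycenter at most $\dashint_Y f(F+\nabla\varphi)\,dy$. Here the $\Phi$-growth bound \eqref{c2eq6} guarantees the relevant integrals are finite and, crucially, the $\Delta_2$ hypothesis is used to pass from the truncations $f_{M,\alpha}$ back to $f$ and to ensure the $\Phi$-equi-integrability needed for Proposition \ref{ball1}(v); one approximates $f$ from below by the $\Phi$-coercive $f_{M,\alpha}$ exactly as in the proof of Lemma \ref{c2lem2}. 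Combining, we obtain $\int f\,d\lambda<\gamma\le\mathcal{Q}f(F)$, contradicting the hypothesis $\mathcal{Q}f(F)\le\int f\,d\lambda$; hence $\lambda\in\mathcal{H}_F$, which is the assertion.

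The main obstacle I anticipate is twofold and both parts are genuinely ``Orlicz'' rather than formal transcriptions of the $W^{1,p}$ argument. First, identifying the correct functional-analytic pairing: one must be careful that $\lambda$ is tested against $\mathscr{E}_\Phi$ (not merely $C_0$), that $(\mathscr{E}_\Phi)'$ is the right ambient space, and that $\mathscr{E}_\Phi$ is precisely the space of weak-$\ast$-continuous linear functionals on it (this is where the isomorphism $\mathscr{E}_\Phi\cong\mathcal{C}(\mathbb{R}^{d\times N}\cup\{\infty\})$ and separability do the work, legitimizing both the metrizability used for closedness and the separation step). Second, the weak-$\ast$ closedness of $\mathcal{H}_F$: given a sequence $\mu_j\in\mathcal{H}_F$ with $\mu_j\overset{\ast}{\rightharpoonup}\mu$ in $(\mathscr{E}_\Phi)'$, each $\mu_j$ is generated by some bounded $\{u_n^{(j)}\}\subset W^{1}L^{\Phi}$ with affine boundary data $F\cdot$; one extracts, via a diagonal procedure and the density of countably many test pairs $z\otimes\varphi$, a single bounded sequence in $W^{1}L^{\Phi}$ generating $\mu$, using the reflexivity of $W^{1}L^{\Phi}$ (valid under $\Delta_2\cap\nabla_2$) to control the limit deformation, and again the $\Phi$-equi-integrability (available after passing to a subsequence by \cite{KoZa2017}) to keep the barycenter and the $\Phi$-moment under control in the limit. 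Everything else — the gluing construction for convexity, the affine-boundary-data reduction — is routine once Lemma \ref{c2lem1} and the results recalled in Section \ref{sect2} are invoked.
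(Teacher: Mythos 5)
Your proposal follows exactly the paper's strategy: the paper defines the set $M'_F$ of homogeneous $W^{1}L^{\Phi}$-gradient Young measures with barycenter $F$, proves it is convex and weak-$\ast$ closed in $(\mathscr{E}_\Phi)'$ (Lemma \ref{c3lem4}, via the gluing Lemma \ref{c3lem3} and the diagonalization/separability argument you describe), and then concludes by Hahn--Banach separation combined with the identification $\inf_{\mu\in M'_F}\int f\,d\mu=\mathcal{Q}f(F)$, deferring the details to \cite[Lemma 3.2]{baba1} and \cite{kinder1} with $\mathcal{E}_p$, $(\mathcal{E}_p)'$ replaced by $\mathscr{E}_\Phi$, $(\mathscr{E}_\Phi)'$. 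Your write-up is a correct rendering of that same argument, including the genuinely Orlicz-specific points (separability of $\mathscr{E}_\Phi$ via the one-point compactification, $\Phi$-equi-integrability from \cite{KoZa2017}, and the $\Delta_2\cap\nabla_2$ hypotheses).
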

\begin{lem}\label{c3lem1}
Let $\Phi\in \Delta_{2}\cap\nabla_2$ be a Young function, $F\in \mathbb{R}^{d\times N}$ and $\nu\in L^{\infty}_{\omega}(Y;\mathcal{M}(\mathbb{R}^{d\times N}))$ be such that $\nu_{y}\in \mathcal{P}(\mathbb{R}^{d\times N})$ for a.e. $y\in Y$. Assume that 
\begin{align*}
F = \int_{Y}\int_{\mathbb{R}^{d\times N}} \xi d\nu_{y}(\xi) dy,
\\
f_{\textup{hom}}(F) \leqslant \int_{Y}\int_{\mathbb{R}^{d\times N}} f(y,\xi) d\nu_{y}(\xi) dy
\hbox{ for every }f\in \mathcal{E}_{\Phi}, \hbox{ and }
\\
 \int_{Y}\int_{\mathbb{R}^{d\times N}} \Phi(|\xi|) d\nu_{y}(\xi) dy < +\infty.
\end{align*}
Then $\{\nu_{y}\}_{y\in Y}$ is a homogeneous two-scale $W^{1}L^\Phi$-gradient Young measure.

\end{lem}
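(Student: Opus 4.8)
The plan is to adapt to the Orlicz--Sobolev setting the Kinderlehrer--Pedregal scheme in the two-scale form of \cite[Section~3]{baba1}, i.e.\ to exhibit $\{\nu_y\}_{y\in Y}$ as a point of a convex, weak-$\ast$ closed set of two-scale gradient Young measures and then apply Hahn--Banach separation. Introduce
\begin{align*}
\mathcal G_F:=\Big\{\mu\in L^{\infty}_{\omega}(Y;\mathcal M(\mathbb R^{d\times N})):\ &\{\mu_y\}_{y\in Y}\text{ is a homogeneous two-scale }W^{1}L^{\Phi}\text{-gradient}\\
&\text{Young measure with }\textstyle\int_{Y}\int_{\mathbb R^{d\times N}}\xi\,d\mu_y(\xi)\,dy=F\Big\}.
\end{align*}
By Lemma~\ref{c2lem2}(iii) every $\mu\in\mathcal G_F$ has finite $\Phi$-moment, so by \eqref{c2eq6} it defines an element of the dual $(\mathcal E_{\Phi})'$ through $\langle\mu,f\rangle:=\int_Y\int_{\mathbb R^{d\times N}}f(y,\xi)\,d\mu_y(\xi)\,dy$; thus $\mathcal G_F\subset(\mathcal E_{\Phi})'$, and under the stated hypotheses $\nu$ itself lies in $(\mathcal E_{\Phi})'$, so it remains to prove $\nu\in\mathcal G_F$.

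First I would check that $\mathcal G_F$ is convex and weak-$\ast$ closed in $(\mathcal E_{\Phi})'$. Convexity follows from the classical cube-splitting (lamination) construction: given $\mu^{0},\mu^{1}\in\mathcal G_F$ and $\theta\in(0,1)$, one partitions the reference cell into two families of small subcubes of relative volumes $\theta$ and $1-\theta$, uses Lemma~\ref{c2lem1} to force the (boundary-adjusted) generating sequences of $\mu^{0}$ and $\mu^{1}$ to agree with the affine map $F\cdot$ near the subcube boundaries, and glues them; the resulting sequence generates $\theta\mu^{0}+(1-\theta)\mu^{1}$ and still has underlying deformation $F\cdot$. Weak-$\ast$ closedness is a diagonal argument: if $\mu^{(k)}\in\mathcal G_F$ and $\mu^{(k)}\overset{\ast}{\rightharpoonup}\mu$, then for a prescribed $\{\varepsilon_n\}\to0$ one chooses generating sequences $\{u^{(k)}_n\}_n$ with $u^{(k)}_n=F\cdot$ near $\partial\Omega$ (Lemma~\ref{c2lem1}) and, after extracting subsequences, with $\{\Phi(|\nabla u^{(k)}_n|)\}_n$ equi-integrable (the Orlicz truncation of \cite{KoZa2017}); exploiting the metrizability of weak-$\ast$ convergence on bounded subsets of $L^{\infty}_{\omega}(\Omega;\mathcal M(\mathbb R^{N}\times\mathbb R^{d\times N}))$ (separability of $L^{1}(\Omega;\mathcal C_{0}(\mathbb R^{N}\times\mathbb R^{d\times N}))$) one extracts a diagonal sequence generating $\mu$, and equi-integrability together with Proposition~\ref{ball1}(v) transfer the barycenter and the finite-$\Phi$-moment conditions to the limit, whence $\mu\in\mathcal G_F$.

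The heart of the argument is the identity
\begin{equation*}
f_{\textup{hom}}(F)=\inf_{\mu\in\mathcal G_F}\langle\mu,f\rangle\qquad\text{for every }f\in\mathcal E_{\Phi},
\end{equation*}
with $f_{\textup{hom}}$ the quantity \eqref{c1eq3} attached to $f$ (the limit existing by subadditivity), the infimum being moreover attained. The bound $\langle\mu,f\rangle\ge f_{\textup{hom}}(F)$ for $\mu\in\mathcal G_F$ is exactly the necessity Lemma~\ref{c2lem2}(ii) in the $x$-independent case. For the reverse inequality and attainment, fix a minimizing sequence for \eqref{c1eq3}: integers $T_n\to+\infty$ and correctors $\phi_n\in W^{1}_{0}L^{\Phi}((0,T_n)^{N};\mathbb R^{d})$, extended $(0,T_n)^{N}$-periodically, with $\dashint_{(0,T_n)^{N}}f(\langle z\rangle,F+\nabla\phi_n(z))\,dz\to f_{\textup{hom}}(F)$; if $f$ is not $\Phi$-coercive one first replaces it by the coercive approximations $f_{M,\alpha}=\max\{-M,f\}+\alpha\Phi$ of the necessity proof and lets $\alpha\to0$, $M\to+\infty$, exactly as in \cite{baba1}, and one may then also assume (again via \cite{KoZa2017}) that $\{\Phi(|\nabla\phi_n|)\}$ is equi-integrable and $\sup_n\dashint_{(0,T_n)^{N}}\Phi(|\nabla\phi_n|)\,dz<+\infty$. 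Given any prescribed $\{\varepsilon_m\}\to0$, set $u_m(x):=Fx+\varepsilon_m\phi_{n(m)}(x/\varepsilon_m)$ with $n(m)\to+\infty$ chosen so slowly that $\varepsilon_m T_{n(m)}\to0$; then $\{u_m\}$ is bounded in $W^{1}L^{\Phi}(\Omega;\mathbb R^{d})$ (by the scaled Poincaré inequality and $\varepsilon_m T_{n(m)}\to0$) and, by a change of variables (the period $\varepsilon_m T_{n(m)}$ of the fast oscillation tending to $0$), $\{(\langle\cdot/\varepsilon_m\rangle,\nabla u_m)\}$ generates a homogeneous two-scale $W^{1}L^{\Phi}$-gradient Young measure $\mu$ with $\langle\mu,f\rangle=\lim_m\dashint_{(0,T_{n(m)})^{N}}f(\langle z\rangle,F+\nabla\phi_{n(m)}(z))\,dz=f_{\textup{hom}}(F)$ by Proposition~\ref{ball1}(v) (the equi-integrability persisting under the fast rescaling) and, testing with the linear functions $\xi\mapsto\xi$ (which belong to $\mathcal E_{\Phi}$), $\int_Y\int_{\mathbb R^{d\times N}}\xi\,d\mu_y(\xi)\,dy=F$ since the correctors vanish on the boundary. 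Thus $\mu\in\mathcal G_F$ realizes $f_{\textup{hom}}(F)$.

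Finally, suppose $\nu\notin\mathcal G_F$. Since $\mathcal G_F$ is convex and weak-$\ast$ closed in $(\mathcal E_{\Phi})'$, whose continuous dual for the weak-$\ast$ topology is the Banach space $\mathcal E_{\Phi}$, the Hahn--Banach separation theorem yields $f_0\in\mathcal E_{\Phi}$ and $s\in\mathbb R$ with $\langle\nu,f_0\rangle<s\le\langle\mu,f_0\rangle$ for all $\mu\in\mathcal G_F$; taking the infimum over $\mathcal G_F$ and applying the previous identity to $f_0$ gives $\langle\nu,f_0\rangle<s\le (f_0)_{\textup{hom}}(F)$, contradicting hypothesis~(ii). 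Hence $\nu\in\mathcal G_F$, i.e.\ $\{\nu_y\}_{y\in Y}$ is a homogeneous two-scale $W^{1}L^{\Phi}$-gradient Young measure; Lemma~\ref{c3lem10} follows by running the same argument with $y$-independent test functions $g\in\mathscr{E}_{\Phi}$, for which $f_{\textup{hom}}$ reduces to $\mathcal{Q}g$ (cf.\ \eqref{c1eq3}, \eqref{Qfdef}). I expect the main obstacle to be precisely this core identity: one must balance the macroscopic scale $\varepsilon_m$ built into the very definition of a two-scale Young measure against the growing corrector cells $(0,T_n)^{N}$ of \eqref{c1eq3} — hence the choice $\varepsilon_m T_{n(m)}\to0$ — and one must invoke the Orlicz Acerbi--Fusco/Koch--Zatloukal truncation to secure the $\Phi$-equi-integrability that upgrades the $\liminf$ in Proposition~\ref{ball1}(iv) to the equality in (v) without perturbing the energy, failing which the generated Young measure could pair with $f$ strictly below $f_{\textup{hom}}(F)$ and the separation argument would break down.
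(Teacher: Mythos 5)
Your proposal is correct and follows essentially the same route as the paper, which reduces the homogeneous sufficiency to the Kinderlehrer--Pedregal/Babadjian--Ba\'ia--Santos scheme: the convexity and weak-$\ast$ closedness of the set $M_F$ in $(\mathcal{E}_{\Phi})'$ (the paper's Lemma \ref{c3lem4}), the realization of $f_{\textup{hom}}(F)$ by measures built from near-minimizing correctors of \eqref{c1eq3} rescaled at a scale $\varepsilon_m T_{n(m)}\to 0$, and Hahn--Banach separation. Your write-up in fact supplies more of the construction details than the paper, which defers them to \cite[Lemma 3.2]{baba1}.
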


We start by presenting the main steps which lead to the proof of Lemma \ref{c3lem1}, since the arguments necessary to prove Lemma \ref{c3lem10} are similar and simpler. We start by introducing some technical tools and providing some auxiliary lemmas. 

\begin{rem}\label{c3rem1}
We observe that, by the same argument of \cite[Remark 3.3]{baba1}, one can prove that the sets $M_{F}$ and $M'_F$, defined, for $F\in \mathbb{R}^{d\times N}$, as
\begin{equation}\label{c3eq4}
	\begin{array}{l}
		M_{F} := \bigg\{\nu\in L^{\infty}_{\omega}(Y;\mathcal{M}(\mathbb{R}^{d\times N}))\,:\,\{\nu_{y}\}_{y\in Y}\,\textup{is\,a\,homogeneous\,two-scale}  \\
		\qquad\quad\quad W^{1}L^{\Phi}-\textup{gradient\,Young\,measure\;and}\; \int_{Y}\int_{\mathbb{R}^{d\times N}} \xi d\nu_{y}(\xi) dy = F  \bigg \},
	\end{array}
\end{equation} 
and 
\begin{equation*}
	\begin{array}{l}
		M'_{F} := \bigg\{\lambda\in \mathcal{M}(\mathbb{R}^{d\times N}))\,:\lambda \textup{ is \, a \, homogeneous}  \\
		\qquad\quad\quad W^{1}L^{\Phi}-\textup{gradient\,Young\,measure\;and}\; \int_{\mathbb{R}^{d\times N}} \xi d\lambda(\xi) = F  \bigg \},
	\end{array}
	\end{equation*}
	are independent of $\Omega$, i.e. it suffices to replace the sequences used in the proofs of \cite[Remark 3.3]{baba1}, which are elements of standard Sobolev spaces by sequences in Sobolev-Orlicz spaces.
\end{rem}
The next technical results are the Orlicz counterpart of \cite[Lemma 3.4]{baba1} and \cite[Lemma 3.2]{kinder1}, i.e. they allow us to construct (two-scale) $W^{1}L^{\Phi}$ gradient Young measures from measures in $M_F$ and $M'_F$, respectively, that are defined on disjoint subsets of $\Omega$. The proof is omitted, since it suffices to replace sequences in $W^{1,p}$- spaces in \cite{baba1} and \cite{kinder1} by Orlicz-Sobolev ones. On the other hand the results are crucial in the proof to get the convexity of the sets $M_{F}$ and $M'_F$.
\begin{lem}\label{c3lem3}
Let $D$ be an open subset of $\Omega$ with Lipschitz boundary, and let $\mu,\,\nu \in L^{\infty}_{\omega}(\Omega\times Y;\mathcal{M}(\mathbb{R}^{d\times N}))$ be such that $\{\mu_{(x,y)}\}_{(x,y)\in\Omega\times Y}$ and $\{\nu_{(x,y)}\}_{(x,y)\in\Omega\times Y}$ are two-scale $W^{1}L^{\Phi}$-gradient Young measures with same underlying deformation $u\in W^{1}L^{\Phi}(\Omega; \mathbb{R}^{d})$. Let 
\begin{equation*}
\sigma_{(x,y)} := \left\{\begin{array}{ll}
\mu_{(x,y)} & \textup{if}\;\, (x,y)\in D\times Y \\
\nu_{(x,y)} & \textup{if}\;\, (x,y)\in (\Omega\backslash D)\times Y.
\end{array} \right.
\end{equation*} 	
Then $\sigma \in L^{\infty}_{\omega}(\Omega\times Y;\mathcal{M}(\mathbb{R}^{d\times N}))$ and $\{\sigma_{(x,y)}\}_{(x,y)\in\Omega\times Y}$ is a two-scale $W^{1}L^{\Phi}$-gradient Young measure with underlying deformation $u\in W^{1}L^{\Phi}(\Omega; \mathbb{R}^{d})$.
\end{lem}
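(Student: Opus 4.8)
The plan is to reduce the statement to a gluing construction at the level of the generating sequences, following the template of \cite[Lemma 3.4]{baba1} but carried out in the Orlicz--Sobolev setting. First I would fix a vanishing sequence $\{\varepsilon_n\}\to 0$ and, using Definition \ref{c2eq1}(ii) together with Lemma \ref{c2lem1}, select two bounded sequences $\{u_n^{1}\},\{u_n^{2}\}\subset W^{1}L^{\Phi}(\Omega;\mathbb{R}^{d})$ that generate $\{\mu_{(x,y)}\otimes dy\}$ and $\{\nu_{(x,y)}\otimes dy\}$ respectively, both weakly converging to the \emph{same} underlying deformation $u$, and both satisfying $u_n^{j}=u$ on a neighbourhood of $\partial\Omega$ and (after passing to a subsequence, invoking \cite{KoZa2017}) with $\{\Phi(|\nabla u_n^{j}|)\}$ equi-integrable. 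Then I would define the candidate generating sequence
\begin{equation*}
v_n:=\begin{cases} u_n^{1} & \text{in } D,\\ u_n^{2} & \text{in } \Omega\setminus D,\end{cases}
\end{equation*}
which lies in $W^{1}L^{\Phi}(\Omega;\mathbb{R}^{d})$ precisely because both pieces agree with $u$ near $\partial D$ (for the interface $\partial D\cap\Omega$) and near $\partial\Omega$, so no jump is created across $\partial D$; hence $\nabla v_n=\nabla u_n^{1}\mathbf{1}_D+\nabla u_n^{2}\mathbf{1}_{\Omega\setminus D}$ in the distributional sense, and $\{v_n\}$ is bounded in $W^{1}L^{\Phi}$ with $v_n\rightharpoonup u$.

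Next I would verify that $\{(\langle\cdot/\varepsilon_n\rangle,\nabla v_n)\}$ generates exactly $\{\sigma_{(x,y)}\otimes dy\}$. For this, take $z\in L^{1}(\Omega)$ and $\varphi\in\mathcal{C}_{0}(\mathbb{R}^{N}\times\mathbb{R}^{d\times N})$ and split the integral over $\Omega$ as the sum of the integral over $D$ and over $\Omega\setminus D$; on $D$ one has $\nabla v_n=\nabla u_n^{1}$, and $z\mathbf{1}_D\in L^{1}(\Omega)$, so the hypothesis on $\{u_n^{1}\}$ gives convergence to $\int_D\int_Y\int_{\mathbb{R}^{d\times N}} z(x)\varphi(y,\xi)\,d\mu_{(x,y)}(\xi)\,dy\,dx$, and symmetrically on $\Omega\setminus D$; adding the two recovers the integral against $\sigma$. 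One must also check that $\sigma$ is weak$^{\ast}$ measurable and that $\sigma_{(x,y)}\in\mathcal{P}(\mathbb{R}^{d\times N})$ for a.e.\ $(x,y)$, which is immediate since this property is inherited from $\mu$ on $D\times Y$ and from $\nu$ on $(\Omega\setminus D)\times Y$, and $\mathcal{L}^{N}(\partial D)=0$ because $D$ has Lipschitz boundary. Finally, the definition of two-scale $W^{1}L^{\Phi}$-gradient Young measure requires the construction to be available for \emph{every} sequence $\{\varepsilon_n\}\to0$, but since the selections of $\{u_n^{j}\}$ above can be made for an arbitrary prescribed $\{\varepsilon_n\}$, this causes no difficulty.

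The main obstacle I anticipate is the measurability and integrability bookkeeping rather than any deep analytic point: one must be careful that the glued map $x\mapsto\sigma_x$ (equivalently $(x,y)\mapsto\sigma_{(x,y)}$) indeed defines an element of $L^{\infty}_{\omega}(\Omega\times Y;\mathcal{M}(\mathbb{R}^{d\times N}))$ and that the underlying deformation of $\sigma$ is genuinely $u$, i.e.\ $\int_Y\int_{\mathbb{R}^{d\times N}}\xi\,d\sigma_{(x,y)}(\xi)\,dy=\nabla u(x)$ a.e., which follows by applying the same splitting to the Carath\'eodory integrand $f(x,y,\xi)=\xi\cdot\phi(x,y)$ exactly as in the proof of Lemma \ref{c2lem2}(i). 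Because every one of these steps is the verbatim Orlicz analogue of the $W^{1,p}$ argument in \cite[Lemma 3.4]{baba1}, with $L^{p}$ bounds replaced by $\|\cdot\|_{L^{\Phi}}$ bounds and $p$-equi-integrability replaced by $\Phi$-equi-integrability (available under $\Phi\in\Delta_2\cap\nabla_2$), the full proof can be safely omitted, and I would simply indicate this reduction.
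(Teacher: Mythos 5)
Your overall template (glue generating sequences on $D$ and $\Omega\setminus D$, then split the test integral) is exactly the route the paper intends, since it simply defers to \cite[Lemma 3.4]{baba1} and \cite[Lemma 3.2]{kinder1} with Sobolev sequences replaced by Orlicz--Sobolev ones. However, there is a genuine gap at the single step that carries the whole weight of the lemma: you invoke Lemma \ref{c2lem1} to arrange $u_n^{j}=u$ on a neighbourhood of $\partial\Omega$, and then assert that ``both pieces agree with $u$ near $\partial D$.'' That does not follow. The interface $\partial D\cap\Omega$ lies in the interior of $\Omega$, so matching $u$ near $\partial\Omega$ says nothing about the traces of $u_n^{1}$ and $u_n^{2}$ on $\partial D$; in general these traces differ, the glued function $v_n$ has a jump across $\partial D$, its distributional gradient contains a surface term supported on $\partial D$, and $v_n\notin W^{1}L^{\Phi}(\Omega;\mathbb{R}^{d})$. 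As written, the construction fails at the very first point where it has to produce an admissible competitor.

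The repair is the one actually carried out in \cite[Lemma 3.4]{baba1}: first observe that the restriction of a two-scale $W^{1}L^{\Phi}$-gradient Young measure to an open subset with Lipschitz boundary is again a two-scale $W^{1}L^{\Phi}$-gradient Young measure on that subset (localize the generating sequence), and then apply the boundary-matching Lemma \ref{c2lem1} \emph{on the subdomains}, i.e.\ with $\Omega$ replaced by $D$ for the generator of $\mu$ and by $\Omega\setminus\overline{D}$ for the generator of $\nu$, so that both modified sequences coincide with $u$ on a full neighbourhood of $\partial D$ (from either side) as well as near $\partial\Omega$. Only then is the glued $v_n$ an element of $W^{1}L^{\Phi}(\Omega;\mathbb{R}^{d})$ with $\nabla v_n=\nabla u_n^{1}\mathbf{1}_{D}+\nabla u_n^{2}\mathbf{1}_{\Omega\setminus D}$, and the rest of your argument (splitting the integral with $z\mathbf{1}_{D}$ and $z\mathbf{1}_{\Omega\setminus D}$, the measurability of $\sigma$, $\mathcal{L}^{N}(\partial D)=0$, and the identification of the underlying deformation) goes through as you describe.
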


\begin{lem}\label{c3lem3}
	Let $D$ be an open subset of $\Omega$ with Lipschitz boundary, and let $\lambda,\,\theta \in L^{\infty}_{\omega}(\Omega ;\mathcal{M}(\mathbb{R}^{d\times N}))$ be such that $\{\lambda_{x}\}_{x\in\Omega}$ and $\{\theta_{x}\}_{x\in\Omega}$ are  $W^{1}L^{\Phi}$-gradient Young measures with same underlying deformation $u\in W^{1}L^{\Phi}(\Omega; \mathbb{R}^{d})$. Let 
	\begin{equation*}
		\kappa_{x} := \left\{\begin{array}{ll}
			\lambda_{x} & \textup{if}\;\, x\in D \\
			\nu_{x} & \textup{if}\;\, x\in \Omega\backslash D.
		\end{array} \right.
	\end{equation*} 	
	Then $\kappa \in L^{\infty}_{\omega}(\Omega;\mathcal{M}(\mathbb{R}^{d\times N}))$ and $\{\kappa_{x}\}_{x\in\Omega}$ is a $W^{1}L^{\Phi}$-gradient Young measure with underlying deformation $u\in W^{1}L^{\Phi}(\Omega; \mathbb{R}^{d})$.
\end{lem}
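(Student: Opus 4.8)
The plan is to mirror the construction carried out in \cite[Lemma 3.4]{baba1} and \cite[Lemma 3.2]{kinder1}, adapting the underlying approximating sequences to the Orlicz-Sobolev framework. Since $\{\lambda_x\}_{x\in\Omega}$ and $\{\theta_x\}_{x\in\Omega}$ are $W^{1}L^{\Phi}$-gradient Young measures with the same underlying deformation $u$, for any fixed sequence $\{\varepsilon_n\}\to 0$ one can find generating sequences $\{a_n\},\{b_n\}\subset W^{1}L^{\Phi}(\Omega;\mathbb R^d)$ with $a_n\rightharpoonup u$ and $b_n\rightharpoonup u$ in $W^{1}L^{\Phi}(\Omega;\mathbb R^d)$, generating $\{\lambda_x\}$ and $\{\theta_x\}$ respectively. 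By Lemma \ref{c2lem1} we may additionally assume that $a_n=u$ and $b_n=u$ on a neighborhood of $\partial\Omega$, and (up to subsequences) that $\{\Phi(|\nabla a_n|)\}$ and $\{\Phi(|\nabla b_n|)\}$ are equi-integrable. The natural candidate generating sequence for $\{\kappa_x\}_{x\in\Omega}$ is then $c_n := \chi_D\, a_n + \chi_{\Omega\setminus D}\, b_n$, which lies in $W^{1}L^{\Phi}(\Omega;\mathbb R^d)$ precisely because both pieces agree with $u$ near $\partial D$ and near $\partial\Omega$, so no spurious interface contribution to the distributional gradient arises; moreover $c_n\rightharpoonup u$ and $\{\Phi(|\nabla c_n|)\}$ remains equi-integrable.

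First I would verify that $\kappa\in L^{\infty}_{\omega}(\Omega;\mathcal M(\mathbb R^{d\times N}))$: this is immediate since $\kappa$ is obtained by gluing two weak$^\ast$-measurable maps with $\mathcal L^N$-uniformly bounded total variation across a measurable partition of $\Omega$, hence weak$^\ast$-measurability and the $L^\infty$-bound on $x\mapsto\|\kappa_x\|_{\mathcal M}$ are inherited. Next I would compute, for arbitrary $z\in L^1(\Omega)$ and $\varphi\in\mathcal C_0(\mathbb R^{d\times N})$,
\begin{align*}
\lim_{n\to\infty}\int_\Omega z(x)\,\varphi(\nabla c_n(x))\,dx
&= \lim_{n\to\infty}\left(\int_D z\,\varphi(\nabla a_n)\,dx + \int_{\Omega\setminus D} z\,\varphi(\nabla b_n)\,dx\right)\\
&= \int_D\!\!\int_{\mathbb R^{d\times N}}\! z(x)\varphi(\xi)\,d\lambda_x(\xi)\,dx + \int_{\Omega\setminus D}\!\!\int_{\mathbb R^{d\times N}}\! z(x)\varphi(\xi)\,d\theta_x(\xi)\,dx\\
&= \int_\Omega\!\!\int_{\mathbb R^{d\times N}}\! z(x)\varphi(\xi)\,d\kappa_x(\xi)\,dx,
\end{align*}
where the second equality uses that $z\chi_D\in L^1(\Omega)$ and $z\chi_{\Omega\setminus D}\in L^1(\Omega)$ are legitimate test densities for the Young measures generated by $\{\nabla a_n\}$ and $\{\nabla b_n\}$. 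Since $z$ and $\varphi$ were arbitrary, $\{\nabla c_n\}$ generates $\{\kappa_x\}_{x\in\Omega}$; the construction works for every $\{\varepsilon_n\}\to 0$, so $\{\kappa_x\}_{x\in\Omega}$ is a $W^{1}L^{\Phi}$-gradient Young measure, and by construction its underlying deformation (in the sense of Remark \ref{c2rem2}) is $u$.

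The only genuinely delicate point—and the one I would treat with care rather than routinely—is the assertion that the glued field $c_n$ actually belongs to $W^{1}L^{\Phi}(\Omega;\mathbb R^d)$ with $\nabla c_n = \chi_D\nabla a_n + \chi_{\Omega\setminus D}\nabla b_n$ in the distributional sense, i.e.\ that there is no singular part concentrated on $\partial D\cap\Omega$. This is exactly where the reduction via Lemma \ref{c2lem1} is used: because $a_n\equiv u\equiv b_n$ on a neighborhood of $\partial D\cap\Omega$ (which is part of $\partial\Omega$ only where $D$ touches the boundary; interior portions of $\partial D$ are handled by noting that the relevant generating sequences can also be taken to match $u$ near $\partial D$ by the same cut-off/truncation argument, exploiting the Lipschitz regularity of $\partial D$ and density of smooth functions, i.e.\ $\mathcal D(\Omega)$ dense in $W^{1}L^{\Phi}$ together with the $\Delta_2\cap\nabla_2$ hypothesis), the traces of $a_n$ and $b_n$ from the two sides of $\partial D$ coincide, so $c_n$ has no jump and its weak gradient is the claimed $L^\Phi$ function. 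Everything else is a verbatim transcription of \cite[Lemma 3.4]{baba1} with $W^{1,p}$ replaced by $W^{1}L^{\Phi}$, using that $L^{\Phi}$ and $W^{1}L^{\Phi}$ are separable and reflexive under $\Phi\in\Delta_2\cap\nabla_2$, which is why the statement is given without a detailed proof in the excerpt.
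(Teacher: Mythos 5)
Your argument is correct and is essentially the proof the paper has in mind (the paper omits it precisely because it is the $W^{1,p}$ argument of \cite[Lemma 3.4]{baba1} and \cite{kinder1} with Orlicz--Sobolev generating sequences): glue boundary-matched generators across $\partial D$ and test against $z\chi_{D}$ and $z\chi_{\Omega\setminus D}$. Just be sure, as you do note at the end, that the cut-off argument behind Lemma \ref{c2lem1} is applied separately on $D$ and on $\Omega\setminus\overline{D}$ so that the traces of $a_n$ and $b_n$ on $\partial D\cap\Omega$ both coincide with that of $u$ --- the first paragraph's appeal to matching only near $\partial\Omega$ would not by itself exclude a jump of $c_n$ across interior portions of $\partial D$.
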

\begin{lem}\label{c3lem4}
Let $\Phi\in \Delta_{2} \cap \nabla_2$. Then the sets $M_{F}$ and $M'_F$ are convex and weak$^{\ast}$-closed subset of $(\mathcal{E}_{\Phi})'$ and $(\mathscr E_{\Phi})'$, respectively.	
\end{lem}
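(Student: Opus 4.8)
The plan is to follow the scheme of \cite[Lemma 3.6]{baba1} (and its analogue in \cite{kinder1}), adapting each step to the Orlicz-Sobolev setting, treating both $M_F$ and $M_F'$ simultaneously since the argument for $M_F'$ is the specialization obtained by dropping the $y$-dependence. \emph{Convexity} is the first and easier part: given $\nu^1,\nu^2\in M_F$ and $t\in(0,1)$, I would realize $t\nu^1+(1-t)\nu^2$ as a two-scale $W^1L^\Phi$-gradient Young measure by splitting $\Omega$ into two disjoint open Lipschitz pieces $D$ and $\Omega\setminus D$ with $\mathcal L^N(D)=t\,\mathcal L^N(\Omega)$, assigning $\nu^1$ on $D\times Y$ and $\nu^2$ on $(\Omega\setminus D)\times Y$, invoking Lemma \ref{c3lem3} to conclude that the glued family $\sigma$ is again a two-scale $W^1L^\Phi$-gradient Young measure with underlying deformation $F\cdot$\,, and then passing to the average $\overline\sigma_y$ via Definition \ref{c2def3} and Proposition \ref{c2lem3}: since $\overline\sigma_y = t\nu^1_y+(1-t)\nu^2_y$ and it is a homogeneous two-scale $W^1L^\Phi$-gradient Young measure with underlying deformation $F\cdot$\,, it lies in $M_F$. (Here one uses Remark \ref{c3rem1} to know $M_F$ does not depend on the particular $\Omega$ used to build the splitting.) The same sentence with $Y$ suppressed gives convexity of $M_F'$.

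\emph{Weak\,$^\ast$-closedness} is the main obstacle and requires more care. First one must check that $M_F$ is a bounded subset of $(\mathcal E_\Phi)'$: every $\nu\in M_F$ satisfies, by Lemma \ref{c2lem2}(iii) applied on $Y\times Y$ together with the boundedness of the generating sequence in $W^1L^\Phi$, a uniform bound $\int_Y\int_{\mathbb R^{d\times N}}\Phi(|\xi|)\,d\nu_y(\xi)\,dy\le C$; combined with $\|\nu_y\|_{\mathcal M}=1$ a.e. and estimate \eqref{c2eq6}, this yields $|\langle\nu,f\rangle|\le c(1+C)\|f\|_{\mathcal E_\Phi}$, so $M_F$ sits in a fixed ball of $(\mathcal E_\Phi)'$, which is weak\,$^\ast$-metrizable by separability of $\mathcal E_\Phi$. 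Hence it suffices to show $M_F$ is sequentially weak\,$^\ast$-closed. So take $\nu^k\in M_F$ with $\nu^k\overset{\ast}{\rightharpoonup}\nu$ in $(\mathcal E_\Phi)'$; I must produce a single sequence of Orlicz-Sobolev fields whose gradients (paired with $\langle\cdot/\varepsilon_n\rangle$) generate $\nu\otimes dy$. The standard route is a diagonal argument: for each $k$ pick a generating sequence $\{u^k_n\}\subset W^1L^\Phi(Y;\mathbb R^d)$ for $\nu^k$ with $u^k_n=F\cdot$ near $\partial Y$ (legitimate by Lemma \ref{c2lem1}), with $\{\Phi(|\nabla u^k_n|)\}$ equi-integrable and with the uniform energy bound above; using the metrizability of the relevant weak\,$^\ast$ topologies on bounded sets (both on $(\mathcal E_\Phi)'$ and on $L^\infty_\omega(\Omega;\mathcal M(\mathbb R^N\times\mathbb R^{d\times N}))$) one extracts $n=n(k)\to\infty$ and $\varepsilon=\varepsilon(k)\to0$ so that $\{(\langle\cdot/\varepsilon(k)\rangle,\nabla u^k_{n(k)})\}$ generates $\nu\otimes dy$; one must also verify, via a De la Vallée-Poussin / biting-type argument exploiting the uniform $\int\Phi(|\xi|)\,d\nu^k$ bound that passes to the limit by lower semicontinuity, that $\{\Phi(|\nabla u^k_{n(k)}|)\}$ remains equi-integrable, so that the limiting measure has mass $1$ a.e. and the full energy identity holds (no concentration). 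Finally $\int_Y\int\xi\,d\nu_y\,dy=F$ survives the limit because $\xi\mapsto\xi\cdot e$ belongs (after truncation, using equi-integrability) to the admissible test class, so $\nu\in M_F$; the case of $M_F'$ is identical with $Y$ removed. I expect the delicate point to be exactly this preservation of equi-integrability / absence of concentration along the diagonal — this is where $\Phi\in\Delta_2\cap\nabla_2$ is used, through the reflexivity of $W^1L^\Phi$, the density of smooth functions, and the Dunford--Pettis characterization of weak $L^1$-compactness — whereas the bookkeeping of the two metrizable weak\,$^\ast$ topologies is routine.
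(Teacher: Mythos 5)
Your overall strategy coincides with the paper's: convexity by splitting $\Omega$ into two Lipschitz pieces of the right measures, gluing with Lemma \ref{c3lem3} and averaging with Definition \ref{c2def3} and Proposition \ref{c2lem3} (using Remark \ref{c3rem1} for the independence of the domain); closedness by exploiting the separability of $\mathcal{E}_{\Phi}$ and a diagonal extraction from generating sequences of the approximating measures $\nu^{k}$. The diagonal argument you describe is essentially the one in the paper, and on two points you are in fact more careful: the paper gets the persistence of the barycenter condition for free from the observation that $(y,\xi)\mapsto\xi_{ij}$ belongs to $\mathcal{E}_{\Phi}$ (no truncation is needed, since $\Phi$ is superlinear), and it does not explicitly verify the tightness/equi-integrability along the diagonal subsequence that keeps the limit measures probabilities, which you rightly single out as the delicate step.

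There is, however, one genuine error in your reduction to sequences: the claim that $M_{F}$ is a \emph{bounded} subset of $(\mathcal{E}_{\Phi})'$, i.e.\ that $\int_{Y}\int_{\mathbb{R}^{d\times N}}\Phi(|\xi|)\,d\nu_{y}(\xi)\,dy\leq C$ uniformly over $\nu\in M_{F}$. This is false. Already for $M'_{0}$, the simple laminates $\tfrac12\delta_{A}+\tfrac12\delta_{-A}$ with $A=t\,a\otimes n$ rank one are homogeneous $W^{1}L^{\Phi}$-gradient Young measures with barycenter $0$ for every $t>0$, and testing against $f=1+\Phi(|\xi|)$, which has unit norm in $\mathscr{E}_{\Phi}$, shows their dual norms blow up as $t\to+\infty$; the bound coming from Lemma \ref{c2lem2}(iii) depends on the generating sequence of each individual measure and is in no way uniform on $M_{F}$. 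Hence $M_{F}$ does not sit in a fixed ball, and you cannot invoke metrizability of the weak$^{*}$ topology on that ball. The correct route --- the one the paper takes, phrased as ``local metrizability'' --- is to use that the weak$^{*}$ topology of $(\mathcal{E}_{\Phi})'$ is metrizable on every closed ball (by separability of $\mathcal{E}_{\Phi}$) and, combined with the convexity you have already proved, to apply the Krein--\v{S}mulian theorem: $M_{F}$ is weak$^{*}$-closed as soon as $M_{F}\cap B$ is weak$^{*}$-closed for every closed ball $B$, and on $B$ sequential arguments suffice. With this repair the rest of your plan goes through.
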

\begin{proof}
	For the reader's convenience we only write some details about $M_F$, highlighting the parts which differ from the proof of \cite[Lemma 3.5]{baba1}. The proof regarding $M'_F$ is even simpler and identical to \cite[Proposition 3.3]{kinder1}.
	
	Every element $\nu\in M_{F}$ can be identified with a homogeneous Young measure $\nu_{y}\otimes dy$. 
	\par First we prove that $M_{F}$ is a subset of $(\mathcal{E}_{\Phi})'$. For this purpose let $\nu\in M_{F}$. Arguing as in proof of Lemma 3.5 in \cite{baba1}, one can show that 
	\begin{equation*}
	K := \int_{Y}\int_{\mathbb{R}^{d\times N}} \Phi(|\xi|) d\nu_{y}(\xi) dy < +\infty.
	\end{equation*}
	Hence, using the fact that $\nu_{y}$ are probability measures for a.e. $y\in Y$, for every $f\in \mathcal{E}_{\Phi}$ we have that 
	\begin{align*}
	\displaystyle{\int_{Y}\int_{\mathbb{R}^{d\times N}} f(y,\xi) d\nu_{y}(\xi) dy  }&\leqslant \displaystyle{\|f\|_{\mathcal{E}_{\Phi}}  \int_{Y}\int_{\mathbb{R}^{d\times N}}  (1 + \Phi(|\xi|)) d\nu_{y}(\xi) dy}   \\
	 & =\displaystyle{ (1+K)\|f\|_{\mathcal{E}_{\Phi}}}. 
	\end{align*}
	As a consequence, $M_{F} \subset (\mathcal{E}_{\Phi})'$. \\
	Let us now prove that $M_{F}$ is closed for the weak$^{\ast}$-topology of $(\mathcal{E}_{\Phi})'$. Denoting by $\overline{M_{F}}$ the closure of $M_{F}$ for the weak$^{\ast}$-topology of $(\mathcal{E}_{\Phi})'$ it is enough show that $\overline{M_{F}} \subset M_{F}$. Since $\mathcal{E}_{\Phi}$ is separable, the weak$^{\ast}$-topology of $(\mathcal{E}_{\Phi})'$ is locally metrizable and thus, if $\nu \in \overline{M_{F}}$, there exists a sequence $\{\nu^{k}\} \subset M_{F}$ such that $\nu^{k}\,\rightharpoonup\,\nu$ in $(\mathcal{E}_{\Phi})'$. Hence, since the map $(y,\xi)\mapsto \xi_{ij}$ is in $\mathcal{E}_{\Phi}$ (where $1\leqslant i\leqslant d$ and $1\leqslant j\leqslant N$), we get, from the definition of weak$^{\ast}$-convergence in $(\mathcal{E}_{\Phi})'$, that 
	\begin{equation}\label{c3eq5}
	\int_{Y}\int_{\mathbb{R}^{d\times N}} \xi d\nu_{y}(\xi) dy = \lim_{k\to +\infty} \int_{Y}\int_{\mathbb{R}^{d\times N}} \xi d\nu_{y}^{k}(\xi) dy = F.
	\end{equation}
	It remains to show that $\{\nu_{y}\}_{y\in Y}$ is a homogeneous two-scale Young measure. By definition, given $\{\varepsilon_{n}\}\to 0$, for each $k\in\mathbb{N}$ there exist sequences $\{u_{n}^{k}\}_{n\in\mathbb{N}} \subset W^{1}L^{\Phi}(Y; \mathbb{R}^{d})$ such that  $\{(\langle\cdot/\varepsilon_{n}\rangle, \nabla u_{n}^{k})\}_{n\in\mathbb{N}}$ generate the homogeneous Young measures  $\nu_{y}^{k}\otimes dy$. For every $(z,\varphi)$ in a countable dense subset of $L^{1}(Y)\times\mathcal{C}_{0}(\mathbb{R}^{N}\times \mathbb{R}^{d\times N})$ we have that 
	\begin{align*}
&\displaystyle{	\lim_{k\to +\infty} \lim_{n\to +\infty}	\int_{Y} z(x) \varphi\left(\langle\dfrac{x}{\varepsilon_{n}}\rangle, \nabla u_{n}^{k}(x)\right) dx }\\
	 & =  	\displaystyle{\lim_{k\to +\infty}  \int_{Y}\int_{Y}\int_{\mathbb{R}^{d\times N}} z(x) \varphi(y,\xi) d\nu_{y}^{k}(\xi) dy dx }\\
	 & = \displaystyle{\int_{Y} z(x) dx \int_{Y}\int_{\mathbb{R}^{d\times N}}  \varphi(y,\xi) d\nu_{y}(\xi) dy,}
	\end{align*}  
	where we have used the fact that $\mathcal{C}_{0}(\mathbb{R}^{N}\times \mathbb{R}^{d\times N}) \subset \mathcal{E}_{\Phi}$ in the second equality. By a diagonalization argument we can find a sequence $\{k(n)\}\nearrow +\infty$ such that, setting $v_{n} := u_{n}^{k(n)}$, we have that 
	\begin{equation*}
	 \lim_{n\to +\infty}	\int_{Y} z(x) \varphi\left(\langle\dfrac{x}{\varepsilon_{n}}\rangle, \nabla v_{n}(x)\right) dx =  \int_{\Omega} z(x) dx \int_{Y}\int_{\mathbb{R}^{d\times N}}  \varphi(y,\xi) d\nu_{y}(\xi) dy.
	\end{equation*}  
	Thus, $\{\nu_{y}\}_{y\in Y}$ is a homogeneous two-scale Young measure, which together with \eqref{c3eq5} implies that $\nu\in M_{F}$. \\
	The proof of the convexity of $M_{F}$ is identical to the one of \cite[Lemma 3.5]{baba1}, thus it is omitted. 
\end{proof}
The sufficiency of conditions i)-iii) in \eqref{c1eq10}-\eqref{c1eq40} and in \eqref{c1eq1}-\eqref{c1eq4}, i.e. the proof of Lemmas \ref{c3lem10} and \ref{c3lem1} in the homogeneous case are identical to the proof of \cite[Lemma 3.2]{baba1}, just replacing in the first case $\mathcal E_p$ and $(\mathcal E_p)'$ therein by $\mathscr E_{\Phi}$ and $(\mathscr E_{\Phi})'$, for the first result and $\mathcal E_\Phi$ and $(\mathcal E_{\Phi})'$ for the second one.

We conclude this subsection observing that  the following result, whose proof is identical to the analogous results in \cite{baba1} and \cite{kinder1} allows to construct homogeneous (two-scale) $W^{1}L^{\Phi}$- gradient Young measures starting from general ones.
\begin{prop}\label{c3prop1}
	Let $\nu \in L^{\infty}_{\omega}(\Omega\times Y;\mathcal{M}(\mathbb{R}^{d\times N}))$ be such that $\{\nu_{(x,y)}\}_{(x,y)\in \Omega\times Y}$ is a two-scale $W^{1}L^{\Phi}$- gradient Young measure. Then for a.e. $a\in\Omega$, $\{\nu_{(a,y)}\}_{y\in Y}$ is a homogeneous two-scale $W^{1}L^{\Phi}$- gradient Young measure.
		Let $\lambda \in L^{\infty}_{\omega}(\Omega;\mathcal{M}(\mathbb{R}^{d\times N}))$ be such that $\{\lambda_{x}\}_{x\in \Omega}$ is a $W^{1}L^{\Phi}$- gradient Young measure. Then for a.e. $a\in\Omega$, $\lambda_a$ is a homogeneous $W^{1}L^{\Phi}$- gradient Young measure.
\end{prop}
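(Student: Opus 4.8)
The plan is to read the conclusion off the characterization theorems themselves: the necessity lemmas say that, at almost every $a$, the slice $\{\nu_{(a,y)}\}_{y\in Y}$ (resp.\ $\lambda_a$) satisfies the three conditions that the homogeneous sufficiency lemmas turn into the statement. No blow-up or rescaling around $a$ is needed; the work is entirely in choosing a single full-measure set of ``good'' points $a$.

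Concretely, I would first apply Lemma \ref{c2lem2} to $\nu$, obtaining $u\in W^{1}L^{\Phi}(\Omega;\mathbb R^d)$ and $u_1\in L^1(\Omega;W^{1}_{\#}L^{\Phi}_{per}(Y;\mathbb R^d))$ such that \eqref{c1eq1} and \eqref{c1eq4} hold and \eqref{c1eq2} holds for every $f\in\mathcal E_\Phi$. Then I would fix a countable set $\{f_k\}\subset\mathcal E_\Phi$ dense in $\mathcal E_\Phi$ (possible since $\mathcal E_\Phi$ is separable, Section \ref{sect3}). Since \eqref{c1eq1}, \eqref{c1eq4} and \eqref{c1eq2} (for $f=f_k$) each hold at a.e.\ point, a countable union of null sets together with Fubini's theorem produces $\Omega_0\subset\Omega$ with $\mathcal L^N(\Omega\setminus\Omega_0)=0$ such that for every $a\in\Omega_0$: the map $y\mapsto\nu_{(a,y)}$ lies in $L^\infty_\omega(Y;\mathcal M(\mathbb R^{d\times N}))$ with $\nu_{(a,y)}\in\mathcal P(\mathbb R^{d\times N})$ for a.e.\ $y$; $u_1(a,\cdot)\in W^{1}_{\#}L^{\Phi}_{per}(Y;\mathbb R^d)$ and $\int_{\mathbb R^{d\times N}}\xi\,d\nu_{(a,y)}(\xi)=\nabla u(a)+\nabla_y u_1(a,y)$ for a.e.\ $y\in Y$; $\int_Y\int_{\mathbb R^{d\times N}}\Phi(|\xi|)\,d\nu_{(a,y)}(\xi)\,dy<+\infty$; and $\int_Y\int_{\mathbb R^{d\times N}}f_k(y,\xi)\,d\nu_{(a,y)}(\xi)\,dy\ge (f_k)_{\rm hom}(\nabla u(a))$ for every $k$.

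Next, for a fixed $a\in\Omega_0$, I would set $F:=\nabla u(a)\in\mathbb R^{d\times N}$ and verify the hypotheses of Lemma \ref{c3lem1}. Integrating the barycenter identity over $Y$ and using that $u_1(a,\cdot)$ is $Y$-periodic, so $\int_Y\nabla_y u_1(a,y)\,dy=0$, gives $\int_Y\int_{\mathbb R^{d\times N}}\xi\,d\nu_{(a,y)}(\xi)\,dy=F$ (the first moment being finite by the $\Phi$-integrability and $\Phi(t)/t\to\infty$); the $\Phi$-integrability is the third bullet; and the Jensen-type inequality $f_{\rm hom}(F)\le\int_Y\int_{\mathbb R^{d\times N}}f(y,\xi)\,d\nu_{(a,y)}(\xi)\,dy$ for all $f\in\mathcal E_\Phi$ follows from its validity on the dense family $\{f_k\}$, because—as recorded after Theorems \ref{c1theo0} and \ref{c1theo1}—it suffices to test condition $(ii)$ against countably many functions. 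Lemma \ref{c3lem1} then gives that $\{\nu_{(a,y)}\}_{y\in Y}$ is a homogeneous two-scale $W^{1}L^{\Phi}$-gradient Young measure with underlying deformation $F\,\cdot\,$. The single-scale assertion is obtained identically, replacing Lemma \ref{c2lem2} by Lemma \ref{c2lem0}, $\mathcal E_\Phi$ by $\mathscr E_\Phi$, $f_{\rm hom}$ by $\mathcal Q f$, and Lemma \ref{c3lem1} by Lemma \ref{c3lem10}; alternatively it follows from the two-scale case via Remark \ref{linkrem}, which identifies $\lambda_x$ with $\nu_{(x,y)}\otimes\frac{1}{\mathcal L^N(Y)}\,dy$.

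The only genuinely delicate step is the passage from the countably many ``for a.e.\ $x$'' statements delivered by Lemma \ref{c2lem2} (one null set per equation and per test function) to a single full-measure set $\Omega_0$ outside of which \emph{all} hypotheses of the homogeneous characterization hold simultaneously; this is handled by the separability of $\mathcal E_\Phi$ (resp.\ $\mathscr E_\Phi$) together with the reduction to a countable dense family of test functions for condition $(ii)$. Everything else—the vanishing of $\int_Y\nabla_y u_1(a,y)\,dy$, finiteness of the first moment of $\nu_{(a,y)}$, and weak-$\ast$ measurability of $y\mapsto\nu_{(a,y)}$—is a routine consequence of $\Phi\in\Delta_2\cap\nabla_2$ and Fubini's theorem, exactly as in \cite{baba1, kinder1}.
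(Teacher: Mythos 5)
Your proposal is correct and follows essentially the same route as the paper, which declares the proof identical to the analogous results in \cite{baba1} and \cite{kinder1}: there, too, one feeds the conclusions of the necessity lemma (Lemma \ref{c2lem2}, resp.\ Lemma \ref{c2lem0}) into the homogeneous sufficiency lemma (Lemma \ref{c3lem1}, resp.\ Lemma \ref{c3lem10}) at a.e.\ point $a$, after reducing condition $(ii)$ to a countable dense family in $\mathcal{E}_{\Phi}$ (resp.\ $\mathscr{E}_{\Phi}$) so that all the ``for a.e.\ $x$'' statements hold on one full-measure set. Your handling of the barycenter via $\int_Y \nabla_y u_1(a,y)\,dy=0$ and of the density extension of the Jensen inequality matches the intended argument.
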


\subsubsection{The nonhomogeneous case}
The general cases,  stated in Lemmas \ref{c3lem20} and \ref{c3lem2}, exactly as in \cite{kinder1} and \cite{baba1}, are based on Proposition \ref{c3prop1} and on a suitable decomposition of the domain $\Omega$, exploiting a variant of  Vitali's covering Theorem and a suitable approximation of (two-scale) $W^{1}L^{\Phi}$-gradient Young measures by (two-scale) $W^{1}L^{\Phi}$-gradient Young measures that are piecewise constant with respect to $x$.

\begin{lem}\label{c3lem2}
	Let $\Omega$ be a bounded and open subset of $\mathbb{R}^{N}$ with Lipschitz boundary.	Let $\Phi$ be a Young function of class $\Delta_{2}\cap \nabla_2$ and let $\lambda\in L^{\infty}_{\omega}\left(\Omega; \right. $ $\left.\mathcal{M}(\mathbb{R}^{d\times N})\right)$ be such that $\lambda_x\in \mathcal{P}(\mathbb{R}^{d\times N})$ for a.e. $x \in \Omega$. Suppose that  
	\begin{itemize}
		\item[i)] there exist $u \in W^{1}L^{\Phi}(\Omega; \mathbb{R}^{d})$ 
		satisfying \eqref{c1eq10}, 
		\item[ii)] for every $f \in \mathscr{E}_{\Phi}$,  \eqref{c1eq20} holds and 
		\item[iii)] \begin{equation*}
			x\mapsto \int_{\mathbb{R}^{d\times N}} \Phi(|\xi|) d\lambda_{x}(\xi) \in L^{1}(\Omega).
		\end{equation*}
	\end{itemize} 
	Then $\{\lambda_{x}\}_{x\in\Omega}$ is a two-scale gradient Young measure with underlying deformation $u$.
\end{lem}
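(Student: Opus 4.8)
The plan is to prove Lemma \ref{c3lem2} by reducing the nonhomogeneous case to the homogeneous one (Lemma \ref{c3lem10}) via a localization-and-patching argument, following the scheme of \cite[proof of Theorem 1.1, nonhomogeneous step]{baba1} and \cite{kinder1}. First I would observe that, by Proposition \ref{c3prop1} (second part), condition ii) combined with the integrability condition iii) forces, for a.e.\ $a\in\Omega$, the measure $\lambda_a$ to satisfy exactly the hypotheses of Lemma \ref{c3lem10} with $F=\nabla u(a)$: indeed $\int \xi\,d\lambda_a(\xi)=\nabla u(a)$ by \eqref{c1eq10}, $\int \Phi(|\xi|)\,d\lambda_a(\xi)<+\infty$ for a.e.\ $a$ by iii) (since an $L^1(\Omega)$ function is finite a.e.), and $\mathcal Q f(\nabla u(a))\le \int f(\xi)\,d\lambda_a(\xi)$ for all $f\in\mathscr E_\Phi$ by \eqref{c1eq20}. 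Hence $\lambda_a\in M'_{\nabla u(a)}$ for a.e.\ $a$, i.e.\ each $\lambda_a$ is a homogeneous $W^1L^\Phi$-gradient Young measure.

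Next I would approximate $u$ and the map $x\mapsto\lambda_x$ by piecewise-constant-in-$x$ objects. Using a variant of Vitali's covering theorem (disjoint balls $B_i=B(a_i,r_i)$ covering $\Omega$ up to a null set), on each $B_i$ replace $u$ by the affine map $x\mapsto u(a_i)+\nabla u(a_i)(x-a_i)$ and $\lambda_x$ by the constant measure $\lambda_{a_i}$. Here I would invoke Lemma \ref{c3lem3} (the patching lemma) repeatedly—or rather its consequence that a measure equal to a homogeneous gradient Young measure on each piece of a Lipschitz decomposition is again a gradient Young measure with the corresponding piecewise-affine underlying deformation. On each $B_i$, since $\lambda_{a_i}\in M'_{\nabla u(a_i)}$, Lemma \ref{c3lem10} provides a generating sequence; a rescaling/translation argument transplants it to $B_i$ matching the affine boundary datum (using Lemma \ref{c2lem1} to adjust boundary values), and then Lemma \ref{c3lem3} glues these finitely many (then, by a diagonal argument, countably many) contributions into a single sequence generating the piecewise-constant measure $\lambda^{(k)}_x:=\sum_i \lambda_{a_i}\mathbf 1_{B_i^{(k)}}(x)$ with underlying deformation the piecewise-affine $u^{(k)}$.

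Then I would let the mesh of the decomposition tend to zero, obtaining a sequence $\lambda^{(k)}\to\lambda$ in the weak$^\ast$ sense of $L^\infty_\omega(\Omega;\mathcal M(\mathbb R^{d\times N}))$ (equivalently, tested against $L^1(\Omega;\mathcal C_0)$), with $u^{(k)}\to u$ in $W^1L^\Phi$; the $\Phi$-growth/equi-integrability bookkeeping here relies on condition iii) together with $\Phi\in\Delta_2\cap\nabla_2$, exactly as in \cite{baba1}. Since for each $k$ the measure $\lambda^{(k)}$ is a $W^1L^\Phi$-gradient Young measure generated by some bounded sequence $\{v^{(k)}_n\}_n\subset W^1L^\Phi(\Omega;\mathbb R^d)$, a final diagonalization over $k$ and $n$ produces a single bounded sequence in $W^1L^\Phi(\Omega;\mathbb R^d)$ whose gradients generate $\lambda$, proving $\{\lambda_x\}_{x\in\Omega}$ is a $W^1L^\Phi$-gradient Young measure with underlying deformation $u$. (The statement as typeset says ``two-scale gradient Young measure'', but since $\lambda$ depends only on $x$, by Remark \ref{linkrem} this is the same as being a $W^1L^\Phi$-gradient Young measure; I would make that identification explicit.)

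The main obstacle, as in the $W^{1,p}$ case, is the approximation/diagonalization step: one must ensure that replacing $u$ by a piecewise-affine map and $\lambda_x$ by a piecewise-constant family does not destroy the integrability condition \eqref{c1eq40} in the limit, and that the various generating sequences can be simultaneously controlled in $W^1L^\Phi$-norm so that the final diagonal sequence is bounded and actually generates $\lambda$ (not merely $\lambda^{(k)}$ for each fixed $k$). This requires quantitative control of the error in each approximation—using the equi-integrability of $\{\Phi(|\nabla v^{(k)}_n|)\}$, the density of simple functions in $L^1(\Omega;\mathcal C_0(\mathbb R^{d\times N}))$, and the metrizability of the relevant weak$^\ast$ topologies on bounded sets (guaranteed by separability of $\mathscr E_\Phi$). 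Once the bookkeeping is set up exactly as in \cite[Lemma 3.2 and its nonhomogeneous extension]{baba1}, with Orlicz norms replacing $L^p$ norms throughout and the compactness/equi-integrability facts recalled in subsection \ref{sect2} substituting for their classical analogues, the argument goes through verbatim.
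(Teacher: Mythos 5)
Your proposal is correct and follows essentially the same route the paper intends for this (omitted) proof: check the hypotheses of the homogeneous Lemma \ref{c3lem10} at a.e.\ point, approximate $\lambda$ and $u$ by piecewise constant/piecewise affine objects via a Vitali-type covering, glue the local generating sequences with the patching Lemma \ref{c3lem3} after matching boundary values by Lemma \ref{c2lem1}, and conclude by a diagonal argument, exactly as in \cite{kinder1} and \cite{baba1}. One small slip worth fixing: the pointwise reduction to the homogeneous case is an application of Lemma \ref{c3lem10} (the sufficiency statement), not of Proposition \ref{c3prop1}, which runs in the opposite (necessity) direction; apart from that mislabel, and your correct observation that ``two-scale'' in the statement should read ``$W^{1}L^{\Phi}$-gradient Young measure'', the argument is the intended one.
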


\begin{lem}\label{c3lem20}
	Let $\Omega$ be a bounded and open subset of $\mathbb{R}^{N}$ with Lipschitz boundary.	Let $\Phi$ be a Young function of class $\Delta_{2}\cap \nabla_2$ and let $\nu \in L^{\infty}_{\omega}\left(\Omega\times Y;\right. $ $\left. \mathcal{M}(\mathbb{R}^{d\times N})\right)$ be such that $\nu_{(x,y)}\in \mathcal{P}(\mathbb{R}^{d\times N})$ for a.e. $(x,y) \in \Omega\times Y$. Suppose that  
	\begin{itemize}
		\item[i)] there exist $u \in W^{1}L^{\Phi}(\Omega; \mathbb{R}^{d})$ and $u_{1} \in L^{1}(\Omega; W^{1}_{\#}L^{\Phi}_{per}(Y;\mathbb R^d))$ with $\left(u_1,\frac{\partial u_{1}}{\partial y_{i}}\right) \in L^{\Phi}(\Omega\times Y_{per};\mathbb R^{2d})$ ($1 \leq i \leq N$) satisfying \eqref{c1eq1}, 
		\item[ii)] for every $f \in \mathcal{E}_{\Phi}$,  \eqref{c1eq2} holds and 
		\item[iii)] \begin{equation*}
			(x, y) \mapsto \int_{\mathbb{R}^{d\times N}} \Phi(|\xi|) d\nu_{(x,y)}(\xi) \in L^{1}(\Omega\times Y).
		\end{equation*}
	\end{itemize} 
	Then $\{\nu_{(x,y)}\}_{(x,y)\in\Omega\times Y}$ is a two-scale gradient Young measure with underlying deformation $u$.
\end{lem}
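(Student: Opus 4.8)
The plan is to adapt the scheme of \cite[proof of Theorem 1.1]{baba1} (itself patterned on \cite{kinder1}), performing the systematic replacement of $W^{1,p}$ by $W^{1}L^{\Phi}$ and of the underlying compactness, equi-integrability and piecewise-affine-approximation tools by their Orlicz counterparts. The proof splits into a reduction to a \emph{piecewise homogeneous} situation and a gluing-plus-diagonalization construction of the generating sequences.

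\emph{Reduction.} First one checks that hypotheses i)--iii) restrict well to slices: averaging \eqref{c1eq1} in $y$ and using the $Y$-periodicity of $u_1$ gives $\int_{Y}\int_{\mathbb R^{d\times N}}\xi\,d\nu_{(a,y)}(\xi)\,dy=\nabla u(a)$ for a.e.\ $a\in\Omega$; \eqref{c1eq2} holds on the slice $x=a$ for every $f\in\mathcal E_\Phi$ (for a.e.\ $a$, first for $f$ in a countable dense subset and then for all $f$ by the separability of $\mathcal E_\Phi$ together with the moment bound \eqref{c1eq4}, restoring $\Phi$-coercivity by the truncation $f_{M,\alpha}=\max\{-M,f\}+\alpha\Phi(|\cdot|)$ exactly as in the necessity part); and \eqref{c1eq4} with Fubini gives $\int_{Y}\int_{\mathbb R^{d\times N}}\Phi(|\xi|)\,d\nu_{(a,y)}(\xi)\,dy<+\infty$ for a.e.\ $a$. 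Hence, by the homogeneous sufficiency Lemma \ref{c3lem1} (cf.\ also Proposition \ref{c3prop1}), for a.e.\ $a\in\Omega$ the family $\{\nu_{(a,y)}\}_{y\in Y}$ is a homogeneous two-scale $W^{1}L^{\Phi}$-gradient Young measure whose underlying deformation is the linear map $x\mapsto\nabla u(a)x$. Next I would use the density of Lipschitz piecewise affine maps in $W^{1}L^{\Phi}(\Omega;\mathbb R^{d})$ (which relies on $\Phi\in\Delta_2\cap\nabla_2$; cf.\ the Sobolev- and Rellich-type properties recalled in \cite{DT,Elvira3}) to approximate $u$ by a Lipschitz, piecewise affine $u^{k}$ with $\nabla u^{k}\to\nabla u$ in $L^{\Phi}$, and along a subdivision $\{\Omega_i^{k}\}_i$ of $\Omega$ adapted to $u^k$ I invoke a Vitali-type covering to pick in each $\Omega_i^{k}$ a common Lebesgue point $a_i^{k}$ of $x\mapsto\nabla u(x)$ and of $x\mapsto\int_{Y}\int_{\mathbb R^{d\times N}}g(y,\xi)\,d\nu_{(x,y)}(\xi)\,dy$, $g$ ranging over a countable dense subset of $\mathcal E_\Phi$. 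Setting $\nu^{k}_{(x,y)}:=\nu_{(a_i^{k},y)}$ for $x\in\Omega_i^{k}$ yields a piecewise homogeneous $\nu^{k}$ which, on each $\Omega_i^{k}\times Y$, is homogeneous with affine underlying deformation $u^{k}|_{\Omega_i^{k}}$, and which satisfies $\nu^{k}\rightharpoonup\nu$ in $L^{\infty}_{\omega}(\Omega\times Y;\mathcal M(\mathbb R^{d\times N}))$ together with convergence of the second moments as $k\to\infty$.

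\emph{Construction.} Fix $k$ and an arbitrary $\{\varepsilon_n\}\to0$. On each $\Omega_i^{k}$, Lemma \ref{c3lem1} (which is domain-independent, Remark \ref{c3rem1}) provides generators of $\{\nu_{(a_i^{k},y)}\otimes dy\}$, and the boundary-matching Lemma \ref{c2lem1} lets us take them of the form $\{v^{k,i}_n\}_n\subset W^{1}L^{\Phi}(\Omega_i^{k};\mathbb R^{d})$ with $v^{k,i}_n=u^{k}$ on a neighbourhood of $\partial\Omega_i^{k}$ and with $\{\Phi(|\nabla v^{k,i}_n|)\}$ equi-integrable (following \cite{KoZa2017} as in the proof of Lemma \ref{c2lem2}), the affine intercepts on the pieces being dictated by the continuous function $u^{k}$. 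Gluing, $w^{k}_n:=v^{k,i}_n$ on $\Omega_i^{k}$ defines an element of $W^{1}L^{\Phi}(\Omega;\mathbb R^{d})$ (no interface jumps, since consecutive pieces coincide with $u^{k}$ near $\partial\Omega_i^{k}$), $w^{k}_n\rightharpoonup u^{k}$, $\{\Phi(|\nabla w^{k}_n|)\}$ is equi-integrable over the finite subdivision, and — by the local character of the notion of Young measure, the same principle underlying Lemma \ref{c3lem3} — $\{(\langle\cdot/\varepsilon_n\rangle,\nabla w^{k}_n)\}$ generates $\nu^{k}\otimes dy$. A diagonal argument then concludes: since $L^{1}(\Omega;\mathcal C_0(\mathbb R^{N}\times\mathbb R^{d\times N}))$ is separable, testing the convergences above against a countable dense family and letting first $n\to\infty$ and then $k\to\infty$ one extracts $k=k(n)\nearrow+\infty$ so that $u_n:=w^{k(n)}_n$ satisfies $u_n\rightharpoonup u$ in $W^{1}L^{\Phi}(\Omega;\mathbb R^{d})$ and $\{(\langle\cdot/\varepsilon_n\rangle,\nabla u_n)\}$ generates $\nu\otimes dy$; as $\{\varepsilon_n\}$ was arbitrary, $\{\nu_{(x,y)}\}_{(x,y)\in\Omega\times Y}$ is a two-scale $W^{1}L^{\Phi}$-gradient Young measure with underlying deformation $u$.

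\emph{Main obstacle.} The scheme is classical; the genuine work is at the Orlicz level. One must ensure that the piecewise affine approximation of $u$ is controlled in the Luxemburg norm and does not spoil the $\Phi$-equi-integrability of the gradients, that the homogeneous construction and the boundary-matching Lemma \ref{c2lem1} can be localized uniformly to the small pieces $\Omega_i^{k}$ (using the domain-independence of the sets of homogeneous two-scale gradient Young measures with prescribed barycenter, Remark \ref{c3rem1}), and — this being the two-scale-specific difficulty — that the microscopic oscillations at scale $\varepsilon_n$ produced inside each $\Omega_i^{k}$ survive the patching performed near $\bigcup_i\partial\Omega_i^{k}$, whose Lebesgue measure must be kept negligible through the Vitali selection; the passage of the Jensen inequality \eqref{c1eq2}, with $f_{\rm hom}$ given by the $T\to+\infty$ cell limit \eqref{c1eq3}, to the slices is the other point requiring care. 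The single-scale Lemma \ref{c3lem2} follows by the same argument (or by averaging in $y$), replacing $\mathcal E_\Phi$, Lemma \ref{c3lem1} and $f_{\rm hom}$ throughout by $\mathscr E_\Phi$, Lemma \ref{c3lem10} and $\mathcal Q f$.
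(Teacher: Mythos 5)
Your overall scheme --- localize i)--iii) to a.e.\ slice $x=a$ (using the separability of $\mathcal E_\Phi$ and Fubini to get a single exceptional null set), invoke the homogeneous Lemma \ref{c3lem1} at suitable Lebesgue points, approximate by measures that are piecewise constant in $x$ over a Vitali-type subdivision, glue boundary-matched generators using Remark \ref{c3rem1} and Lemma \ref{c2lem1}, and diagonalize --- is exactly the route the paper intends: the proof is omitted there, with a pointer to precisely these ingredients and to \cite{kinder1,baba1}. (Incidentally, Proposition \ref{c3prop1} is the \emph{converse} localization, so your parenthetical ``cf.\ also Proposition \ref{c3prop1}'' is not what does the work in your reduction; what you actually use, correctly, is the direct verification of the hypotheses of Lemma \ref{c3lem1} on the slices.)

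The one step that does not close as written is the gluing. Lemma \ref{c2lem1} lets you choose generators of $\{\nu_{(a_i^k,y)}\otimes dy\}$ on $\Omega_i^k$ that agree near $\partial\Omega_i^k$ with the \emph{underlying deformation of that measure}, i.e.\ with an affine map of gradient $\nabla u(a_i^k)$ --- not with your globally continuous piecewise affine $u^k$, whose gradient $F_i^k$ on $\Omega_i^k$ differs in general from $\nabla u(a_i^k)$. If you force $v_n^{k,i}=u^k$ near $\partial\Omega_i^k$, you are no longer generating $\nu_{(a_i^k,y)}\otimes dy$ (wrong barycenter on that piece); if instead you keep the affine traces $\nabla u(a_i^k)x+c_i$, you cannot choose the constants $c_i$ so that the glued function is continuous across all interfaces, because the piecewise constant field $\{\nabla u(a_i^k)\}_i$ is not itself a gradient. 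The standard repair --- and this is what the reduction ``first $u=0$, then the general case'' buys in the paper and in \cite{kinder1,baba1} --- is to dispense with $u^k$ and set $w_n^k:=u+\sum_i\chi_{\Omega_i^k}\left(v_n^{k,i}-\ell_i^k\right)$, where $\ell_i^k$ is affine with $\nabla\ell_i^k=\nabla u(a_i^k)$ and $v_n^{k,i}-\ell_i^k\in W^1_0L^\Phi(\Omega_i^k;\mathbb R^d)$; then $w_n^k\in W^1L^\Phi(\Omega;\mathbb R^d)$ automatically, $\nabla w_n^k=\nabla v_n^{k,i}+\left(\nabla u-\nabla u(a_i^k)\right)$ on $\Omega_i^k$, and the error $\nabla u-\nabla u(a_i^k)$ is negligible in $L^\Phi$ as the covering is refined because the $a_i^k$ are Lebesgue points of $\nabla u$. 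With this modification (and the analogous one in the single-scale Lemma \ref{c3lem2}) your argument goes through and coincides with the paper's.
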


Both proofs are omitted.
They rely on Proposition \ref{c3prop1}, \cite[Lemma 7.9]{pedregal2} Remark \ref{c3rem1} and Lemma \ref{c2lem1}, first considering the case $u=0$ and then passing to the general case.
The next corollary asserts the independence of the sequence in Definition \ref{c2eq1}.
\begin{cor}\label{c3cor1}
	Let $\{u_{n}\}$ be a bounded sequence in $W^{1}L^{\Phi}(\Omega;\mathbb{R}^{d})$. Assume that there exists a sequence $\{\varepsilon_{n}\}\to 0$ such that the pair $\{(\langle\cdot/\varepsilon_{n}\rangle, \nabla u_{n})\}$ generates a Young measure $\{\nu_{(x,y)}\otimes dy\}_{x\in \Omega}$. Then the family $\{\nu_{(x,y)}\}_{(x,y)\in\Omega\times Y}$ is a two-scale gradient Young measure.
\end{cor}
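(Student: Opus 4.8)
The plan is to show that the family $\{\nu_{(x,y)}\}_{(x,y)\in\Omega\times Y}$ satisfies the three necessary-and-sufficient conditions of Lemma \ref{c3lem20} (equivalently, of Theorem \ref{c1theo1}), so that it is automatically a two-scale $W^1L^\Phi$-gradient Young measure \emph{for every} vanishing sequence $\{\varepsilon_n'\}$, not merely for the particular $\{\varepsilon_n\}$ at hand. The key point is that conditions i)--iii) of Lemma \ref{c3lem20} are \emph{intrinsic} to the measure $\nu$: they do not refer to the scale sequence. Hence the strategy is: verify these conditions using the single given generating sequence, then invoke sufficiency.

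First I would extract from $\{u_n\}$ a subsequence (not relabelled) such that $u_n\rightharpoonup u$ weakly in $W^1L^\Phi(\Omega;\mathbb R^d)$ for some $u$, using the boundedness of $\{u_n\}$ and the reflexivity of $W^1L^\Phi$ (valid since $\Phi\in\Delta_2\cap\nabla_2$). After a further subsequence, by \cite{KoZa2017} we may assume $\{\Phi(|\nabla u_n|)\}$ is equi-integrable, and by \cite[Theorem 4.2]{tacha1} (and \cite[Remark 2]{tacha2}) that $\{\nabla u_n\}$ weakly two-scale converges to $\nabla u+\nabla_y u_1$ for some $u_1\in L^1(\Omega;W^1_\#L^\Phi_{per}(Y;\mathbb R^d))$ with $(u_1,\partial u_1/\partial y_i)\in L^\Phi(\Omega\times Y_{per};\mathbb R^{2d})$. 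Passing to this subsequence does not change the generated Young measure $\{\nu_{(x,y)}\otimes dy\}_{x\in\Omega}$ (uniqueness of limits). Then, exactly as in \eqref{c2eq7}--\eqref{c2eq8} in the proof of Lemma \ref{c2lem2}, testing against $\phi\in\mathcal C^\infty_c(\Omega\times Y;\mathbb R^{d\times N})$ and using equi-integrability together with Proposition \ref{ball1}(v), I obtain $\int_{\mathbb R^{d\times N}}\xi\,d\nu_{(x,y)}(\xi)=\nabla u(x)+\nabla_y u_1(x,y)$ a.e., which is condition i); and testing with $\Phi(|\xi|)$ gives $(x,y)\mapsto\int\Phi(|\xi|)\,d\nu_{(x,y)}(\xi)\in L^1(\Omega\times Y)$, which is condition iii). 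Condition ii), the Jensen inequality with $f_{\rm hom}$, is precisely part ii) of Lemma \ref{c2lem2}, whose proof (via the $\Gamma$-$\liminf$ inequality of \cite[Theorem 1.1]{FTGNZ} and the localization argument) applies verbatim.

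Having established i)--iii), Lemma \ref{c3lem20} yields that $\{\nu_{(x,y)}\}_{(x,y)\in\Omega\times Y}$ is a two-scale $W^1L^\Phi$-gradient Young measure with underlying deformation $u$; in particular, for \emph{every} sequence $\{\varepsilon_n'\}\to 0$ there is a bounded sequence in $W^1L^\Phi(\Omega;\mathbb R^d)$ generating $\{\nu_{(x,y)}\otimes dy\}_{x\in\Omega}$ at scale $\varepsilon_n'$, as required by Definition \ref{c2eq1}(ii). The main (minor) obstacle is purely bookkeeping: one must check that the various subsequence extractions are compatible — i.e. that the Young measure generated along the chosen subsequence is still $\{\nu_{(x,y)}\otimes dy\}_{x\in\Omega}$ — which follows because the full sequence already generates it, hence so does every subsequence. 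No genuinely new ideas beyond those already deployed in Lemmas \ref{c2lem2} and \ref{c3lem20} are needed; the corollary is essentially the observation that ``generating a two-scale Young measure of gradient type along one scale sequence'' forces the intrinsic structural conditions, which then give generation along all scale sequences.
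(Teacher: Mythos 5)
Your proposal is correct and is essentially the argument the paper intends (and leaves implicit): the necessity proof of Lemma \ref{c2lem2} only ever uses one generating sequence, so conditions i)--iii) hold for $\nu$, and Lemma \ref{c3lem20} then upgrades this to generation along every scale sequence. The subsequence bookkeeping you flag is handled exactly as you say, since any subsequence of a generating sequence generates the same Young measure.
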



\section{Proof of Theorem \ref{c1theo2}}\label{sect4}

Before proving Theorem \ref{c1theo2} we start by recalling Valadier's notion of \textit{admissible integrand} (see \cite{vala2}).
\begin{defn}\label{c4def1}
	A function $f : \Omega\times Y\times\mathbb{R}^{d\times N} \to [0,+\infty)$ is said to be an admissible integrand if for any $\eta>0$, there exist compact sets $K_{\eta}\subset\Omega$ and $V_{\eta}\subset Y$, with $\mathcal{L}^{N}(\Omega\backslash K_{\eta}) <\eta$ and $\mathcal{L}^{N}(Y\backslash V_{\eta}) <\eta$ and such that $f|_{K_{\eta}\times V_{\eta}\times\mathbb{R}^{d\times N}}$ is continuous.
\end{defn}  
We observe that from Lemma 4.11 in Barchiesi \cite{barchie2}, if $f$ is an admissible integrand then, for fixed $\varepsilon>0$, the function $(x,\xi)\mapsto f(x,\langle x/\varepsilon\rangle,\xi)$ is $\mathcal{L}(\Omega)\times \mathcal{B}(\mathbb{R}^{d\times N})$-measurable, where $\mathcal{L}(\Omega)$ and $\mathcal{B}(\mathbb{R}^{d\times N})$ denote, respectively, the $\sigma$-algebra of Lebesgue measurable subsets of $\Omega$ and Borel subsets of $\mathbb{R}^{d\times N}$. In particular, the functional \eqref{intro1} is well defined in $W^{1}L^{\Phi}(\Omega;\mathbb{R}^{d})$.
\begin{proof}(\textit{of Theorem \ref{c1theo2}})
	Let $u\in W^{1}L^{\Phi}(\Omega;\mathbb{R}^{d})$ and let $\{\varepsilon_{n}\} \to 0$. We start by showing that 
	\begin{equation}\label{c4eq1}
	\Gamma\textup{-}\limsup_{n\to +\infty} \mathcal{F}_{\varepsilon_{n}}(u) \leqslant \inf_{\nu\in \mathcal{M}_{u}} \int_{\Omega}\int_{Y}\int_{\mathbb{R}^{d\times N}} f(x,y,\xi) d\nu_{(x,y)}(\xi) dy dx,
	\end{equation}
	where $\mathcal{M}_{u}$ is the set defined in \eqref{c1mu}. Let $\nu\in \mathcal{M}_{u}$, by Remark \ref{c2rem2} there exists a sequence $\{u_{n}\}\subset W^{1}L^{\Phi}(\Omega;\mathbb{R}^{d})$  such that  $\{(\langle\cdot/\varepsilon_{n}\rangle, \nabla u_{n})\}$ generates the Young measure $\{\nu_{(x,y)}\otimes dy\}_{x\in \Omega}$ and $u_{n}\,\rightharpoonup\,u$ in $W^{1}L^{\Phi}(\Omega;\mathbb{R}^{d})$. Extract a subsequence $\{\varepsilon_{n_{k}}\} \subset \{\varepsilon_{n}\}$ such that 
	\begin{equation*}
	\limsup_{n\to +\infty} \mathcal{F}_{\varepsilon_{n}}(u_{n}) = \limsup_{k\to +\infty} \mathcal{F}_{\varepsilon_{n_{k}}}(u_{n_{k}})
	\end{equation*}
	and that $\{\Phi(|\nabla u_{n_{k}}|)\}$ is equi-integrable, which is always possible by the so-called Decomposition Lemma in the Orlicz-Sobolev setting (see \cite{KoZa2017}). In particular, due to 
	 \eqref{c1intro2}, the sequence $\{f(\cdot,\langle\cdot/\varepsilon_{n_{k}}\rangle, \nabla u_{n_{k}})\}$ is equi-integrable as well and applying Theorem 2.8 (ii) in Barchiesi \cite{barchie2} we get that 
	\begin{eqnarray}
	\Gamma\textup{-}\limsup_{n\to +\infty} \mathcal{F}_{\varepsilon_{n}}(u) &\leqslant&  \limsup_{n\to +\infty} \int_{\Omega} f\left(x, \langle\tfrac{x}{\varepsilon_{n_{k}}}\rangle,\nabla u_{n_{k}}(x)\right) dx \label{c4eq2} \\
	& = &  \int_{\Omega}\int_{Y}\int_{\mathbb{R}^{d\times N}} f(x,y,\xi) d\nu_{(x,y)}(\xi) dy dx. \nonumber
	\end{eqnarray}
	
	Taking the infimum over all $\nu\in \mathcal{M}_{u}$ in the right hand side of the \eqref{c4eq2} yields to \eqref{c4eq1}.
	\par Let us prove now that 
	\begin{equation}\label{c4eq4}
	\Gamma\textup{-}\liminf_{n\to +\infty} \mathcal{F}_{\varepsilon_{n}}(u) \geqslant \inf_{\nu\in \mathcal{M}_{u}} \int_{\Omega}\int_{Y}\int_{\mathbb{R}^{d\times N}} f(x,y,\xi) d\nu_{(x,y)}(\xi) dy dx.
	\end{equation}
	Let $\eta>0$ and $\{u_{n}\}\subset W^{1}L^{\Phi}(\Omega;\mathbb{R}^{d})$ such that $u_{n}\,\rightharpoonup\,u$ in $W^{1}L^{\Phi}(\Omega;\mathbb{R}^{d})$ and 
	\begin{equation}\label{c4eq5}
	\liminf_{n\to +\infty} \mathcal{F}_{\varepsilon_{n}}(u_{n}) \leqslant \Gamma\textup{-}\liminf_{n\to +\infty} \mathcal{F}_{\varepsilon_{n}}(u)+\eta.
	\end{equation}
	For a subsequence $\{n_{k}\}$, we can assume that there exists $\nu \in L^{\infty}_{\omega}\left(\Omega\times Y; \right. $ $\left. \mathcal{M}(\mathbb{R}^{d\times N})\right)$ such that $\{(\langle\cdot/\varepsilon_{n_{k}}\rangle, \nabla u_{n_{k}})\}$ generates a Young measure $\{\nu_{(x,y)}\otimes dy\}_{x\in \Omega}$ and 
	\begin{equation}\label{c4eq6}
\lim_{k\to +\infty} \mathcal{F}_{\varepsilon_{n_{k}}}(u_{n_{k}}) = \liminf_{n\to +\infty} \mathcal{F}_{\varepsilon_{n}}(u_{n}).
	\end{equation}
	We remark taht $\{\nabla u_{n_{k}}\}$ is equi-integrable since it is bounded in $L^{\Phi}(\Omega;\mathbb{R}^{d\times N})$. Thus, by Proposition \ref{ball1} (v) we get that for every $A\in\mathcal{A}(\Omega)$,
	\begin{equation*}
	\begin{array}{rcl}
	\int_{A} \nabla u(x) dx =  \lim_{k\to +\infty} \int_{A} \nabla u_{n_{k}}(x) dx 
	  =  \int_{A}\int_{Y}\int_{\mathbb{R}^{d\times N}} \xi d\nu_{(x,y)}(\xi) dy dx.
	\end{array}
	\end{equation*}
	By the arbitrariness of the set $A$, it follows that 
	\begin{equation}\label{c4eq7}
	\nabla u(x) = \int_{Y}\int_{\mathbb{R}^{d\times N}} \xi d\nu_{(x,y)}(\xi) dy \quad \textup{a.e.\;in}\; \Omega.
	\end{equation}
	As a consequence of Corollary \ref{c3cor1}, $\{\nu_{(x,y)}\}_{(x,y)\in\Omega\times Y}$ is a two-scale $W^1L^{\Phi}$-gradient Young measure and, by \eqref{c4eq7}, we also have that $\nu\in\mathcal{M}_{u}$. Applying now Theorem 2.8 (i) in Barchiesi \cite{barchie2} we get that 
	\begin{align*}
&	\displaystyle{\lim_{n\to +\infty} \int_{\Omega} f\left(x, \langle\dfrac{x}{\varepsilon_{n_{k}}}\rangle,\nabla u_{n_{k}}(x)\right) dx     }\\
&  \geqslant\displaystyle{  \int_{\Omega}\int_{Y}\int_{\mathbb{R}^{d\times N}} f(x,y,\xi) d\nu_{(x,y)}(\xi) dy dx }\\
& \geqslant \displaystyle{\inf_{\nu\in \mathcal{M}_{u}} \int_{\Omega}\int_{Y}\int_{\mathbb{R}^{d\times N}} f(x,y,\xi) d\nu_{(x,y)}(\xi) dy dx.}
	\end{align*}
	Hence by \eqref{c4eq5}, \eqref{c4eq6} and the arbitrariness of $\eta$ we get the desired result. Gathering \eqref{c4eq1} and \eqref{c4eq4}, we obtain that 
	\begin{equation*}
	\Gamma\textup{-}\lim_{n\to +\infty} \mathcal{F}_{\varepsilon_{n}}(u) = \inf_{\nu\in \mathcal{M}_{u}} \int_{\Omega}\int_{Y}\int_{\mathbb{R}^{d\times N}} f(x,y,\xi) d\nu_{(x,y)}(\xi) dy dx.
	\end{equation*}
	It remains to prove that the minimum is attained. To this aim, consider a recovery sequence $\{\bar{u}_{n}\}\subset W^{1}L^{\Phi}(\Omega;\mathbb{R}^{d})$. Arguing exactly as before we can assume that (a subsequence of) $\{\nabla \bar{u}_{n}\}$ generates a two-scale gradient Young measure $\{\nu_{(x,y)}\}_{(x,y)\in \Omega\times Y}$ that $\nu\in\mathcal{M}_{u}$ and  $\{f(\cdot,\langle\cdot/\varepsilon_{n}\rangle, \nabla \bar{u}_{n})\}$ is equi-integrable. According to Theorem 2.8 (ii) in Barchiesi \cite{barchie2} and using the fact that $\{\bar{u}_{n}\}$ is a recovery sequence,
	\begin{align*}
	\Gamma\textup{-}\lim_{n\to +\infty} \mathcal{F}_{\varepsilon_{n}} &= 
	\lim_{n\to +\infty} \int_{\Omega} f\left(x, \langle\dfrac{x}{\varepsilon_{n}}\rangle,\nabla \bar{u}_{n}(x)\right) dx     \\
	& =    \int_{\Omega}\int_{Y}\int_{\mathbb{R}^{d\times N}} f(x,y,\xi) d\nu_{(x,y)}(\xi) dy dx
	\end{align*}
	which completes the proof.
\end{proof}
Let us conclude by stating a corollary which provides an alternative formula to derive the homogenized energy density $f_{\textup{hom}}$ in \eqref{c1eq3}.
\begin{cor}
	If $f : Y\times\mathbb{R}^{d\times N}\to [0,+\infty)$ is a Carath\'{e}odory integrand (independent of $x$) and satisfying \eqref{c1intro2}, then for every $u\in W^{1}L^{\Phi}(\Omega;\mathbb{R}^{d})$,
	\begin{equation*}
	\mathcal{F}_{\textup{hom}}(u) = \int_{\Omega} f_{\textup{hom}}(\nabla u(x)) dx,
	\end{equation*}
where for every $F\in\mathbb{R}^{d\times N}$,
\begin{equation*}
f_{\textup{hom}}(F)= \min_{\nu \in M_{F}} \int_{Y}\int_{\mathbb{R}^{d\times N}} f(y,\xi) d\nu_{y}(\xi) dy
\end{equation*}	
and $M_{F}$ is defined in \eqref{c3eq4}.
\end{cor}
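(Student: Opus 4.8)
The plan is to read the corollary off the representation established in Theorem~\ref{c1theo2}. Since $f$ is Carath\'{e}odory it is, a fortiori, an admissible integrand (Scorza--Dragoni), so Theorem~\ref{c1theo2} applies and, $f$ being independent of $x$, it gives $\mathcal{F}_{\textup{hom}}(u)=\min_{\nu\in\mathcal{M}_u}\int_\Omega\int_Y\int_{\mathbb{R}^{d\times N}}f(y,\xi)\,d\nu_{(x,y)}(\xi)\,dy\,dx$ for every $u\in W^{1}L^{\Phi}(\Omega;\mathbb{R}^{d})$. The first point to settle is that, for each fixed $F\in\mathbb{R}^{d\times N}$, the functional $\nu\mapsto\int_Y\int f(y,\xi)\,d\nu_y(\xi)\,dy$ attains its infimum on the set $M_{F}$ of \eqref{c3eq4}, so that $f_{\textup{hom}}(F):=\min_{\nu\in M_F}\int_Y\int f\,d\nu_y\,dy$ is well defined. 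This holds because $M_F$ is weak${}^\ast$-closed in $(\mathcal{E}_\Phi)'$ (Lemma~\ref{c3lem4}); because the coercivity bound $\alpha\Phi(|\xi|)\le f(y,\xi)$ from \eqref{c1intro2} forces a minimizing sequence $\{\nu^k\}$ to satisfy $\sup_k\int_Y\int\Phi(|\xi|)\,d\nu^k_y\,dy<+\infty$, whence Proposition~\ref{ball1} (and its criterion~(iii)) yields weak${}^\ast$ relative compactness and preservation of the probability property in the limit; and because $f$, being a nonnegative normal integrand, makes the functional weak${}^\ast$ lower semicontinuous along such sequences by Proposition~\ref{ball1}(iv). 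The separability of $\mathcal{E}_\Phi$ metrizes the weak${}^\ast$ topology on the relevant bounded sets, so these sequential arguments are legitimate.

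For the lower bound, fix $\nu\in\mathcal{M}_u$. By Proposition~\ref{c3prop1}, for a.e.\ $x\in\Omega$ the slice $\{\nu_{(x,y)}\}_{y\in Y}$ is a homogeneous two-scale $W^{1}L^{\Phi}$-gradient Young measure, and by the constraint defining $\mathcal{M}_u$ it satisfies $\int_Y\int\xi\,d\nu_{(x,y)}(\xi)\,dy=\nabla u(x)$; hence $\nu_{(x,\cdot)}\in M_{\nabla u(x)}$ and $\int_Y\int f(y,\xi)\,d\nu_{(x,y)}(\xi)\,dy\ge f_{\textup{hom}}(\nabla u(x))$ for a.e.\ $x$. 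As $f_{\textup{hom}}$ is a (quasiconvex) $\Gamma$-limit density of $\Phi$-growth it is continuous, so $x\mapsto f_{\textup{hom}}(\nabla u(x))$ is measurable; integrating over $\Omega$ and taking the infimum over $\nu\in\mathcal{M}_u$ yields $\mathcal{F}_{\textup{hom}}(u)\ge\int_\Omega f_{\textup{hom}}(\nabla u(x))\,dx$.

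For the reverse inequality and the attainment of the outer minimum one constructs an optimal $\nu^\ast\in\mathcal{M}_u$: for a.e.\ $x$ choose, measurably in $x$, a minimizer $\mu^x\in M_{\nabla u(x)}$ of the problem defining $f_{\textup{hom}}(\nabla u(x))$, and set $\nu^\ast_{(x,y)}:=\mu^x_y$. One then verifies $\nu^\ast\in\mathcal{M}_u$ through Lemma~\ref{c3lem20}: condition~(i) holds with a corrector $u_1(x,\cdot)$ obtained from Lemma~\ref{c2lem2}(i) applied to the homogeneous measure $\mu^x$, i.e.\ $y\mapsto\int\xi\,d\mu^x_y(\xi)=\nabla u(x)+\nabla_y u_1(x,y)$, the $L^{\Phi}(\Omega\times Y_{per})$-integrability of $(u_1,\partial_{y_i}u_1)$ following from Jensen's inequality, the $\Delta_2$ bound, an Orlicz Poincar\'{e}--Wirtinger inequality on $Y$, and the estimate $\int_Y\int\Phi(|\xi|)\,d\mu^x_y\,dy\le\alpha^{-1}f_{\textup{hom}}(\nabla u(x))\le\tfrac{\beta}{\alpha}(1+\Phi(|\nabla u(x)|))\in L^{1}(\Omega)$; condition~(ii) holds because $\mu^x$ is a homogeneous two-scale gradient Young measure and thus satisfies the inequality of Lemma~\ref{c2lem2}(ii) for every test function in $\mathcal{E}_\Phi$; and condition~(iii) is precisely the $L^{1}(\Omega)$ estimate just displayed. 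With $\nu^\ast\in\mathcal{M}_u$ in hand, $\mathcal{F}_{\textup{hom}}(u)\le\int_\Omega\int_Y\int f\,d\nu^\ast_{(x,y)}\,dy\,dx=\int_\Omega f_{\textup{hom}}(\nabla u(x))\,dx$, which, combined with the lower bound, yields the identity and shows that the outer minimum is attained at $\nu^\ast$. Finally, the agreement of this $f_{\textup{hom}}$ with the density in \eqref{c1eq3} follows by applying the identity just proved to an affine $u(x)=Fx$ on $\Omega=Y$: for $x$-independent $f$ the quantity $\int_Y\int_Y\int f\,d\nu_{(x,y)}\,dy\,dx$ depends only on the $x$-average $\overline\nu$ of $\nu$, which lies in $M_F$ by Proposition~\ref{c2lem3} and gives the same value of the functional, while conversely every element of $M_F$ is itself a competitor in $\mathcal{M}_{Fx}$; on the other hand, by classical homogenization for $x$-independent Carath\'{e}odory integrands of $\Phi$-growth (cf.\ \cite{FTGNZ}), $\mathcal{F}_{\textup{hom}}(Fx)$ computed on $\Omega=Y$ equals the density of \eqref{c1eq3} at $F$, so the two formulas for $f_{\textup{hom}}(F)$ coincide.

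The step I expect to be the main obstacle is the construction of $\nu^\ast$: performing the measurable selection $x\mapsto\mu^x$ and the measurable assembly of the correctors $u_1(x,\cdot)$, so that the hypotheses of Lemma~\ref{c3lem20} are genuinely met, is the delicate part; everything else is either a direct appeal to the earlier results or a routine compactness and lower-semicontinuity argument. One may alternatively sidestep this construction by taking as known the classical homogenization identity $\mathcal{F}_{\textup{hom}}(u)=\int_\Omega f_{\textup{hom}}(\nabla u(x))\,dx$ with $f_{\textup{hom}}$ as in \eqref{c1eq3}, and then only proving the pointwise equality between that density and $\min_{\nu\in M_F}\int_Y\int f\,d\nu_y\,dy$ via the affine case above; on that route the only remaining technical point is the attainment of the minimum dealt with in the first paragraph.
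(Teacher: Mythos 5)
Your fallback route --- quote the identity $\mathcal{F}_{\textup{hom}}(u)=\int_\Omega f_{\textup{hom}}(\nabla u)\,dx$ with $f_{\textup{hom}}$ as in \eqref{c1eq3} from \cite{FTGNZ}, apply Theorem \ref{c1theo2} on $\Omega=Y$ to the affine map $u(x)=Fx$, and pass from $\mathcal{M}_{Fx}$ to $M_F$ by averaging in $x$ via Proposition \ref{c2lem3} --- is precisely the proof the paper gives, and it is sound. Note that on this route you do not even need your first paragraph's compactness argument for the attainment of $\min_{M_F}$: Theorem \ref{c1theo2} already asserts that the minimum over $\mathcal{M}_{Fx}$ is attained, and the averaged minimizer $\overline{\nu}$ lies in $M_F$ and realizes the same value, so attainment over $M_F$ comes for free. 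Your direct lower-semicontinuity argument is workable, but as stated it leans on Proposition \ref{ball1}(iv), which concerns Young measures generated by sequences of functions rather than weak$^\ast$ convergence in $(\mathcal{E}_\Phi)'$; to make it rigorous you would approximate $f$ from below by bounded continuous functions (which belong to $\mathcal{E}_\Phi$) and take a supremum of weak$^\ast$-continuous functionals.

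Your primary route, by contrast, genuinely differs from the paper and contains a real gap that you yourself flag: the construction of $\nu^\ast$ requires a weak$^\ast$-measurable selection $x\mapsto\mu^x$ of minimizers of a parametrized family of problems over $M_{\nabla u(x)}$, together with a measurable assembly of the correctors $u_1(x,\cdot)$ so that $u_1\in L^1(\Omega;W^1_\# L^\Phi_{per}(Y;\mathbb{R}^d))$ with the joint integrability demanded by Lemma \ref{c3lem20}(i). None of the results in the paper provide such a selection, and invoking a Kuratowski--Ryll-Nardzewski or Aumann-type theorem here would itself require verifying that the multifunction $x\mapsto\operatorname{argmin}$ is measurable with nonempty closed values in a suitable Polish structure on $(\mathcal{E}_\Phi)'$ --- a nontrivial argument that is not sketched. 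Since the corollary can be closed completely by the averaging route, you should either carry out that selection in detail or simply promote the "alternative" at the end of your proposal to the main argument.
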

\begin{proof}
	It is proven in \cite{FTGNZ} that
	\begin{equation*}
	\mathcal{F}_{\textup{hom}}(u) = \int_{\Omega} f_{\textup{hom}}(\nabla u(x)) dx
	\end{equation*} 
	where $f_{\textup{hom}}$ is defined in \eqref{c1eq3}. By Theorem \ref{c1theo2} with $\Omega=Y$ and $u(x)=Fx$, we get that 
	\begin{equation*}
	f_{\textup{hom}}(F)= \min_{\nu \in \mathcal{M}_{u}} \int_{Y}\int_{Y}\int_{\mathbb{R}^{d\times N}} f(x,y,\xi) d\nu_{(x,y)}(\xi) dy dx.
	\end{equation*}
	The thesis follows from Proposition \ref{c2lem3}.
\end{proof}
\begin{rem}
	Let $\Phi(t) = \frac{t^{p}}{p}$ ($p>1, \;t\geq 0$), then $\Phi \in \Delta_{2}\cap \nabla_2$ (with $\widetilde{\Phi}(t)=\frac{t^{q}}{q}$, $q=\frac{p}{p-1}$) and one has $L^{\Phi}(\Omega;\mathbb{R}^{d})\equiv L^{p}(\Omega;\mathbb{R}^{d})$, $L^{\Phi}(\Omega\times Y_{per}) \equiv L^{p}(\Omega; L^{p}_{per}(Y))$ and $W^{1}L^{\Phi}(\Omega;\mathbb{R}^{d})\equiv W^{1,p}(\Omega;\mathbb{R}^{d})$.  Therefore, the integral functional in \eqref{intro1} can be written as
	\begin{equation*}
	\mathcal{F}_{\varepsilon}(u) := \int_{\Omega} f\left(x,\langle\frac{x}{\varepsilon}\rangle, \nabla u(x)\right) dx,
	\end{equation*}  
	with $u\in W^{1,p}(\Omega;\mathbb{R}^{d})$. Thus we find the same results of homogenization of Theorem \ref{c1theo1} and Theorem \ref{c1theo2} in classical Sobolev spaces (see \cite{baba1}).
\end{rem}

%

\vspace{0.3cm}

\textbf{Acknowledgements.}  Fotso Tachago is grateful to  Department of Basic and Applied Science for Engineering of Sapienza - University of Rome for its kind hospitality, during the preparation of this work. He also acknowledges the support received by International Mathematical Union, through IMU grant 2024. E.~Zappale acknowledges the support of the project
``Mathematical Modelling of Heterogeneous Systems (MMHS)",
financed by the European Union - Next Generation EU,
CUP B53D23009360006, Project Code 2022MKB7MM, PNRR M4.C2.1.1.  She is a member of the Gruppo Nazionale per l'Analisi Matematica, la Probabilit\`a e le loro Applicazioni (GNAMPA) of the Istituto Nazionale di Alta Matematica ``F.~Severi'' (INdAM). 
She also acknowledges partial funding from the GNAMPA Project 2023 \emph{Prospettive nelle scienze dei materiali: modelli variazionali, analisi asintotica e omogeneizzazione}.



\begin{thebibliography}{99}
	
	
	\bibitem{allair1} \textsc{Allaire G.}, \textit{Homogenization and two scale convergence}, SIAM, J. Math. Anal., \textbf{23}, (1992), 1482-1518.

\bibitem{baba1} \textsc{Babadjian J-F., Baia M. and  Santos P. M.}, \textit{Characterization of two-scale gradient Young measures and application to homogenization}, Applied Mathematics and Optimization, \textbf{57} (2007), 69-97.

\bibitem{bia} \textsc{Baia M. and Fonseca I.}, \textit{$\Gamma$-convergence of functionals with periodic integrands via 2-scale convergence}, 

\bibitem{ball1} \textsc{Ball J.M.}, \textit{A version of the fundamental theorem for Young measures, PDE's and continum models for phase transitions}, Lecture notes physics, \textbf{334} (1989), 207-215.
	


\bibitem{barchie2} \textsc{Barchiesi M.}, \textit{Loss of polyconvexity by homogenization: a new example},  Calc. Var. and Partial Diff. Eq., \textbf{30} (2007), 215-230.


\bibitem{b}\textsc{Bianconi B.},
\textit{A new proof of semicontinuity by {Y}oung measures and an
approximation theorem in {O}rlicz-{S}obolev spaces},
 {Abstr. Appl. Anal.}, {\bf 15}, (2003), 881--898.





\bibitem{CDDEA1}
\textsc{Cioranescu D., Damlamian A. and De Arcangelis
	R.}, \textit{Homogenization of nonlinear integrals via the periodic
	unfolding method}, {C. R. Math. Acad. Sci. Paris},
{\bf 339}, (2004), n.1, 77--82.

\bibitem{CDDEA2} \textsc{Cioranescu D., Damlamian A. and De Arcangelis
		R.},
\textit{Homogenization of quasiconvex integrals via the periodic
		unfolding method}, {SIAM J. Math. Anal.},
{\bf 37}, (2006), n. 5, 1435--1453.




\bibitem{CDG1}
	\textsc{Cioranescu D., Damlamian A. and Griso G.},
	\textit{Periodic unfolding and homogenization},
	{C. R. Math. Acad. Sci. Paris},
{\bf 335}, (2002), n. 1, 99--104.

\bibitem{CDG2}\textsc{Cioranescu D., Damlamian A. and Griso G.},
\textit{The periodic unfolding method in homogenization},
{SIAM J. Math. Anal.},
{\bf 40}, (2008), n. 4, 1585--1620.

\bibitem{CDGbook}
\textsc{Cioranescu D., Damlamian A. and Griso G.},
\textit{The periodic unfolding method},
{Series in Contemporary Mathematics}, {\bf 3},
{Theory and applications to partial differential problems},
{Springer, Singapore}, (2018), xv+513.


\bibitem{DG}\textsc{Desch W. and Grimmer R.},
\textit{On the wellposedness of constitutive laws involving
dissipation potentials},
{Trans. Amer. Math. Soc.},
{\bf 353}, (2001), n. 12, 5095--5120.

	\bibitem{diper1} \textsc{Diperna R. J.}, \textit{Convergence of approximate solutions to conservation laws}, Arch. Rational Mech. Anal. \textbf{82} (1983), 27-70.
	
	\bibitem{DT} \textsc{Donaldson K.T. and Trudinger N.S.},\textit{Orlicz-Sobolev spaces and Imbedding thoerems}, J. Funct. Anal., {bf 8}, (1971), 52-75.

	
	\bibitem{w} \textsc{E W.}, \textit{Homogenization of linear and nonlinear transport equations}, Comm. Pure Appl. Math., \textbf{45} (1992), 301-326.
	
\bibitem{FLbook} \textsc{Fonseca I. and Leoni G.} \textit{Modern Methods in the Calculus of Variations: $L^p$ spaces}. Springer Monographs in Mathematics. Springer, New York, 2007.
	
	\bibitem{fonse2} \textsc{Fonseca I., M\"{u}ller S. and Pedegral P.}, \textit{Analysis of concentration and oscillation effects generated by gradients}, SIAM J. Math. Anal. \textbf{29} (1998), 736-756.
	
	\bibitem{FMAq} \textsc{Fonseca I. and M\"{u}ller S.},
	 \textit{{$\mathscr A$}-quasiconvexity, lower semicontinuity, and {Y}oung
		measures}, {SIAM J. Math. Anal.}, {\bf 30}, n.6, (1999), 1355--1390.
	\bibitem{tacha1} \textsc{Fotso Tachago J. and Nnang H.}, \textit{two scale convergence of integral functional with convex periodic and nonstandard growth integrands}, Acta Appl. Math., \textbf{121}, (2012), 175-196.
	

	
	\bibitem{tacha2} \textsc{Fotso Tachago J., Gargiulo G., Nnang H. and Zappale E.}, \textit{Multiscale Homogenization of integral convex functionals in Orlicz-Sobolev Setting}
	{Evolution Equations and Control Theory}, {\bf 10}, n.2,(2021), 297 – 320.

	\bibitem{tacha5} \textsc{Fotso Tachago J., Gargiulo G., Nnang H. and Zappale E.}, \textit{Some convergence results on the periodic nfolding operator in Orlicz setting}, Integral Methods in Science and Engineering, Conference Proceedings, (2023), 361-371.
	
	\bibitem{FTGNZ} \textsc{Fotso Tachago J., Gargiulo G., Nnang H. and  Zappale E.}, \textit{Homogenization of non-convex integral energies with Orlicz growth, via periodic unfolding},  submitted.

	\bibitem{tacha6} \textsc{Fotso Tachago J., Nnang H. and Zappale E.}, \textit{Relaxation of Periodic and Nonstandard Growth Integrals by means of Two-scale convergence}, Integral Methods in Science and Engineering., (2019), DOI: $10.1007/978-030-16077-7_{-}10$ 
	
	\bibitem{tacha3} \textsc{Fotso Tachago J., Nnang H. and Zappale E.}, \textit{Reiterated periodic homogenization of integral functional with convex and nonstandard growth integrands}, Opuscula Math., \textbf{41}, (2021), 113-143.
	

	
	\bibitem{FTNZDIE}\textsc{Fotso Tachago J., Nnang H. and Zappale E.},\textit{Reiterated Homogenization of nonlinear degenerate elliptic operators with nonstandard growth},  Differential Integral Equations {\bf 37} (9/10), (2024), 717-752, DOI: 10.57262/die037-0910-717B.

	
	


\bibitem{kinder1} \textsc{Kinderlehrer D. and Pedegral P.}, \textit{Gradient Young measures generated by sequences in Sobolev spaces}, J. Geom. Anal., \textbf{4} (1988), 59-90.

\bibitem{kinder2} \textsc{Kinderlehrer D. and Pedegral P.}, \textit{Characterization of Young measures generated by gradients}, Arch. Rational Mech. Anal., \textbf{115} (1991), 329-365.


\bibitem{KoZa2017} \textsc{Kozarzewski P. A. and Zappale E.},
	\textit{Orlicz equi-integrability for scaled gradients},
{Journal of Elliptic and Parabolic Equations}, {\bf 3}	(2017), {1-2}, 1 – 13.

\bibitem{Elvira3} \textsc{Kozarzewski P. A. and  Zappale E.}, \emph{A note on Optimal
	design for thin structures in the Orlicz-Sobolev setting,} Proceedings of the IMSE conference 2016, DOI
10.1007/978-3-319-59384-5-14.

\bibitem{LN}\textsc{Laskowski W. and Nguyen H. T.},
\textit{Effective energy integral functionals for thin films in the
	{O}rlicz-{S}obolev space setting},
{Demonstratio Math.}, {\bf 46}, n. 3
(2013), {585--604}.
\bibitem{LNW}\textsc{Lukkassen D., Nguetseng G. and Wall, P.},\textit{ Two scale convergence}, Int. J. Pure Appl. Math. \textbf{2},(2002), 35–86.




\bibitem{MM} \textsc{Mingione G. and Mucci D.},
\textit{Integral functionals and the gap problem: sharp bounds for
relaxation and energy concentration},
{SIAM J. Math. Anal.},
\textbf{36}, (2005), n. 5
1540--1579.
	\bibitem{mignon2} \textsc{Mingione G. and Radulescu V.}, \textit{Recent developments in problems with nonstandard growth and nonuniform ellipticity}, J. Math. Anal. Appl. \textbf{501} (2021) 125197.
	
	
	\bibitem{nguet1} \textsc{Nguetseng G.}, \textit{A general convergence result for a functional related to the theory of homogenization}, SIAM Journal on Mathematical Analysis \textbf{20}(3), 1989, 608-623.


\bibitem{pedregal2} \textsc{Pedregal P.}, \textit{Parametrized measures and variational principles}, Progress in Nonlinear Differential Equations and their Applications, \textbf{30}, Birkh\"{a}user Verlag, Basel (1997).

\bibitem{pedregal1} \textsc{Pedregal P.}, \textit{Vector variational problems and applications to optimal design}, ESAIM Control Optim. Calc. Var., \textbf{11} (2005), 357-381.
\bibitem{pedregal3} \textsc{Pedregal P.}, \textit{Multiscale Young measures}, Trans. Am. Math. Soc., \textbf{358} (2006), 591-602.



\bibitem{vala1} \textsc{Valadier M.}, \textit{D\'{e}sint\'{e}gration d'une mesure sur un produit}, C. R. Acad Ac. Paris \textbf{276} S\'{e}rie A (1973), 33-35.
\bibitem{vala2} \textsc{Valadier M.}, \textit{Admissible functions in two-scale convergence}, Portugaliae Mathematica, \textbf{54} (1997), 148-164.



	
\end{thebibliography}
\end{document}